\documentclass[12pt]{article}

\usepackage{amssymb,amsmath,amsthm,ucs}

\topmargin -0.5in
\textheight 9.0in
\textwidth 6.5in
\oddsidemargin 0.0in
\evensidemargin 0.0in

\newcommand{\ovl}{\overline}
\newcommand{\n}{\noindent}
\newcommand{\vp}{\varepsilon}

\newcommand{\ovltimes}{\ovl\otimes}
\newcommand{\cp}{M\rtimes_\alpha G}
\newcommand{\rcp}{M\rtimes_{\alpha,r} G}
\newcommand{\cpcut}{p_1Mp_1\rtimes_\alpha G}
\newcommand{\rcpcut}{p_1Mp_1\rtimes_{\alpha,r} G}
\newcommand{\rcpl}{M\rtimes_{\alpha,r} G_\lambda}

\newcommand{\sett}[1]{\left\{#1\right\}}

\newcommand{\eps}{\varepsilon}

\newcommand{\U}{\mathcal{U}}

\newcommand{\norm}[1]{\left\Vert#1\right\Vert}

\numberwithin{equation}{section}

\theoremstyle{plain}
\newtheorem{lem}{Lemma}[section]

\newtheorem{thm}[lem]{Theorem}
\newtheorem{cor}[lem]{Corollary}

\theoremstyle{definition}

\theoremstyle{remark}
\newtheorem{rem}[lem]{Remark}

\begin{document}

\null\vspace{.5in}
\setcounter{page}{1}
\begin{LARGE}
\begin{center}
BIMODULES IN CROSSED PRODUCTS AND\\
REGULAR INCLUSIONS OF FINITE FACTORS
\end{center}
\end{LARGE}
\bigskip

\begin{center}
\begin{tabular}{cc}
{\large Jan Cameron}$^{(*)}$&{ \large Roger R. Smith}$^{(**)}$\\
&\\
Department of Mathematics & Department of Mathematics\\
Vassar College& Texas A\&M University\\
Poughkeepsie, NY 12604 & College Station, TX 77843\\
&\\
jacameron@vassar.edu & rsmith@math.tamu.edu

\end{tabular}
\end{center}

\vfill

\begin{abstract}
In this paper, we study bimodules over a von Neumann algebra $M$ in two related contexts. The first is an inclusion $M \subseteq M \rtimes_\alpha G$, where $G$ is a discrete group
acting on a factor $M$ by outer automorphisms. The second is a regular inclusion $M \subseteq N$ of finite factors.  In the case of crossed products, we characterize the $M$-bimodules $X$ that lie between $M$ and $M \rtimes_\alpha G$ and are closed in the Bures topology, in terms of the subsets of $G$.  We show that this
characterization also holds for $w^*$-closed bimodules when $G$ has the approximation property ($AP$), a class of groups that includes all amenable and weakly amenable ones.  As an application, we prove a version of
Mercer's extension theorem for certain $w^*$-continuous isometric maps on $X$. We establish a similar  theorem for bimodules arising from regular inclusions of finite
factors, which generalizes the crossed product situation when $G$ acts on a finite factor.  In the final section we apply these ideas to provide new examples of singly generated finite factors.

\end{abstract}

\vfill

\noindent Key Words:\qquad von Neumann algebra, crossed product, bimodule

\vfill

\noindent AMS Classification: 46L10, 46L06

\vfill

\noindent ($\ast$) JC was partially supported by an AMS-Simons research travel grant.

\noindent ($\ast \ast$) RS was partially supported by NSF grant DMS-1101403. Corresponding author.
\newpage

\section{Introduction}\label{sec1}
The starting point for this paper is a theorem due to H. Choda \cite{Cho} which describes the von Neumann algebras which lie between a factor $M$ and its crossed product
$M\rtimes_\alpha G$ by a discrete group $G$ acting on $M$ by outer automorphisms. Choda proved that an intermediate von Neumann algebra $N$ for which there exists a
normal conditional expectation $E:\cp \to N$ must have the form $M\rtimes_\alpha H$ for a subgroup $H$ of $G$. When $M$ is type II$_1$, the existence of $E$ is immediate
and so the intermediate von Neumann algebras are precisely of the form
$M\rtimes_\alpha H$ for subgroups $H$ of $G$. The result for other types of factors was improved in \cite{ILP}, where it was shown that such conditional expectations
always exist, at least when $M$ has separable predual. It is natural to ask whether the intermediate $w^*$-closed $M$-bimodules can be characterized in a similar way.
Each unital subset $S$ of $G$ gives rise to a $w^*$-closed $M$-bimodule as
$X_S=\ovl{{\text{span}}}^{w^*}\sett{mg:m\in M,\ g\in S}$, so the question is whether all $w^*$-closed $M$-bimodules occur in this way. The first part of the paper is
devoted to this problem, and we are able to provide a complete answer when we work with a different topology introduced by Bures in \cite{Bur}. Our techniques are
largely based on Fourier series in the crossed product and, as noted by Mercer \cite{Mer0}, the Bures topology is the correct one for understanding the convergence of
such series. We establish a correspondence between the Bures closed $M$-bimodules and the subsets of $G$, and recapture the results on intermediate von Neumann algebras
above without separability restrictions. The Bures closed $M$-bimodules are all $w^*$-closed, but the reverse statement is open. However, for groups with a finite
Haagerup constant (a class of groups known as the weakly amenable groups, which contains all amenable groups) the two classes of bimodules coincide.

Bimodules over subalgebras of von Neumann algebras have been studied previously in various contexts. For example, the case of an inclusion $A \subseteq M$ for a Cartan subalgebra $A$ was examined
in \cite{MSS}, but unfortunately the arguments for the Spectral Theorem for Bimodules contained a gap, as pointed out in \cite{Aoi}. A proof for the case when the containing von Neumann algebra $M$ is
amenable was given in \cite{Ful}.  A full rehabilitation of the Spectral Theorem for Bimodules was then attempted in \cite{Mer}, but this regrettably also contained an error. A slightly different formulation of the result was
found recently in \cite{CPZ} which characterized the Bures closed $A$-bimodules in $M$ rather than the $w^*$-closed ones, but the status of the original formulation of the Spectral Theorem for Bimodules  remains uncertain. There are certainly $w^*$-closed subspaces that are
not Bures closed, suggesting that the same phenomenon might occur for bimodules, but no such examples are known.    We also note that bimodules have been studied in the related context of tensor products of von Neumann algebras by Kraus \cite{Kra}. If $N$ is a factor satisfying a technical condition called the {\emph{weak}}$^*$ {\emph{operator approximation property}} and $R$ is a von Neumann algebra, then he was able to characterize the $\sigma$-weakly closed $N$-bimodules of $N\overline{\otimes}R$ as those subspaces of the form $N\overline{\otimes}T$ where $T$ is a $\sigma$-weakly closed subspace of $R$.

If $X$ is a $w^*$-closed unital subspace of a von Neumann algebra which it generates, then we may consider the inclusion $X \subseteq W^*(X)$. An old question, dating
back at least to early work of Arveson \cite{Ar1,Ar2}, asks whether $w^*$-continuous unital isometries on $X$ extend to $\ast$-automorphisms of $W^*(X)$. This has been studied in
\cite{DavP,CPZ}, the latter paper concentrating on the situation $A\subseteq X\subseteq M$ where $A$ is Cartan in $M$ and $X$ is a $w^*$-closed $A$-bimodule that
generates $M$. Mercer \cite{Mer} claimed that isometric unital maps on $X$ that respected the bimodule structure extended to $\ast$-automorphisms of $M$, but the proof
was flawed. If such a result were true then the original map would have to be $w^*$-continuous, so the correct formulation of this theorem is to assume $w^*$-continuity,
and the extension result was proved in \cite{CPZ} under this hypothesis. In this paper we prove a version of Mercer's theorem in the context of bimodules over a von
Neumann factor.

The structure of the paper is as follows. Section \ref{sec2} recalls the properties of crossed products by discrete groups, and also some facts about weakly amenable
groups. Section \ref{sec3} is devoted to a brief discussion of the Bures topology and its relationship to the $w^*$-topology. Section \ref{sec4} deals with 
$M$-bimodules $X$ for an inclusion $M \subseteq \cp$, and characterizes those that are closed in the Bures topology as being in bijective correspondence with the
unital subsets $S$ of $G$ via the map
\[S\mapsto \ovl{{\text{span}}}^B\sett{mg:m\in M,\ g\in S}.\]
In the special case of a weakly amenable group, the map
\[S\mapsto \ovl{{\text{span}}}^{w^*}\sett{mg:m\in M,\ g \in S}\]
also identifies subsets of $G$ with the $w^*$-closed $M$-bimodules.  Our techniques for proving these and subsequent results depend crucially on a theorem of Christensen and Sinclair \cite{CS} which, in the form used here, constructs a projection from the space of completely bounded maps on a von Neumann algebra $M$ to the space of right $M$-module maps. Not only the existence of this projection but also the method of constructing it will be important for this paper (see Theorem \ref{thm4.1}).

Section \ref{sec5} examines $M$-bimodules $Y$ inside the reduced C$^*$-algebra crossed product $\rcp$ and which generate this C$^*$-algebra. We show that $\rcp$ is the
C$^*$-envelope of such bimodules $Y$, after showing that a nontrivial norm closed ideal in $\rcp$ must have nontrivial intersection with $M$. These are the preliminary
results needed for a version of Mercer's theorem for $M$-bimodules $X$ satisfying $M\subseteq X\subseteq \cp$, proved in Theorem \ref{thm6.3}.

Inclusions $M\subseteq \cp$ are particular examples of regular inclusions $M\subseteq N$ of factors. Section \ref{sec7} takes our earlier results on crossed products and
extends them to regular inclusions of finite factors. Formally, the results are the same but require somewhat different methods in this more general context. The last
section applies our results to a different problem, the question of single generation of von Neumann algebras. Here we establish some new classes of finite factors for
which a positive answer can be given, based on an invariant introduced by Shen \cite{Shen}.

\section{Crossed products}\label{sec2}

A $\ast$-automorphism $\theta$ of a von Neumann algebra $M$ is said to be {\emph{properly outer}} if there does not exist a nonzero projection $p \in M$ so that $\theta$
restricts to an inner $\ast$-automorphism of $pMp$. In the case of a factor, this is equivalent to $\theta $ being {\emph{outer}}, meaning that it is not inner. It is well known
that proper outerness is equivalent to {\emph{freeness}} \cite[$\S$ 17]{Str} which is defined by the condition that no nonzero element $t\in M$ can satisfy the equation $tx=\theta(x)t$ for
$x\in M$. We note that Lemma \ref{lem5.0} below gives a useful variant on freeness that also characterizes proper outerness of $\ast$-automorphisms. When we say that a
group $G$ acts by (properly) outer automorphisms on a von Neumann algebra $M$, we of course intend this to apply to each automorphism $\alpha_g$ for $g\ne e$ since
$\alpha_e$ is always the identity which is trivially inner. The following discussion of crossed products and Lemma \ref{lem2.1} do not require properly outer actions,
but such an assumption is necessary from Section \ref{sec4} onwards.

Let $G$ be a discrete group that acts on a von Neumann algebra $M\subseteq B(H)$ by automorphisms $\alpha_g$, $g\in G$. We may define a faithful normal representation $\pi:M\to
B(H\otimes \ell^2(G))$ by
\begin{equation}\label{eq2.1}
\pi(x)(\xi\otimes \delta_h)=\alpha_{h^{-1}}(x)\xi\otimes \delta_h,\ \ \ \ x\in M,\ \xi\in H,\ h\in G,
\end{equation}
and a unitary representation of the group $G$ by
\begin{equation}\label{eq2.2}
\lambda_g(\xi\otimes \delta_h)=\xi\otimes \delta_{gh},\ \ \ \ \xi\in H,\ g,h\in G,
\end{equation}
which is $I\otimes \ell_g$ for the left regular representation $g\mapsto \ell_g$ of $G$ on $\ell^2(G)$.
These operators are chosen so that
\begin{equation}\label{eq2.3}
\lambda_g\pi(x)\lambda_{g^{-1}}=\pi(\alpha_g(x)),\ \ \ \ x\in M,\ g\in G,
\end{equation}
and so there is no loss of generality in assuming that $M$, in its original representation, has its automorphisms $\alpha_g$ spatially implemented by a unitary representation $g\mapsto
a_g$, $g\in G$. This is standard theory, and we follow the presentation and notation of \cite{vD}. The crossed product $\cp$ is then the von Neumann algebra in $B(H\otimes \ell^2(G))$
generated by the operators $\pi(x)$ and $\lambda_g$ for $x\in M$ and $g\in G$.

If we define a unitary operator $W\in B(H\otimes \ell^2(G))$ by
\begin{equation}\label{eq2.4}
W(\xi\otimes \delta_h)=a_h\xi\otimes \delta_h,\ \ \ \ \xi\in H, \ h\in G,
\end{equation}
then
\begin{equation}\label{eq2.5}
W\pi(x)W^*=x\otimes I,\ \ \ \ x\in M,
\end{equation}
and
\begin{equation}\label{eq2.6}
W\lambda_gW^*=a_g\otimes \ell_g,\ \ \ \ g\in G,
\end{equation}
(see \cite[Prop. 2.12]{vD}). Note that \eqref{eq2.5} and \eqref{eq2.6} imply that $W(\cp)W^*\subseteq B(H)\,\ovl{\otimes}L(G)$, where $L(G)$ is the group von Neumann algebra. The predual
of $L(G)$ is the Fourier algebra $A(G)$ consisting of the functions $g\mapsto \langle \ell_g\xi,\eta\rangle$ where $\xi,\eta\in \ell^2(G)$. A {\emph{completely bounded multiplier}} is a function
$f$ on $G$ such that $f\cdot A(G)\subseteq A(G)$ and the associated multiplication operator $M_f$ is completely bounded. The space of completely bounded multipliers is denoted by $M_0(A(G))$ and
is a Banach space in the {\it{cb}}-norm. In \cite{HK} it is shown that
$M_0(A(G))$ is a dual space with predual denoted by $Q(G)$, and $G$ is said
to have the {\emph{approximation property}} ($AP$) if the constant function
1 is in the $w^*$-closure of the space of finitely supported functions on
$G$. This is a large class of groups that contains all amenable discrete
groups and also the weakly amenable groups of \cite{CH}.

The operators in part (vi) of the following lemma are only given a concrete description on the generators of $\cp$. The extension to general elements will be established in equation \eqref{eq4.21}. 

In an earlier version of the paper, this lemma was formulated for weakly
amenable discrete groups. We thank Jon Kraus for pointing out that
essentially the same proof works for groups with the $AP$.
\begin{lem}\label{lem2.1}
Let $G$ be a discrete  group with the $AP$ acting by automorphisms $\alpha_g$, $g\in G$, on a von Neumann algebra $M\subseteq B(H)$. Then there exist  a net
$(f_\gamma)_{\gamma\in\Gamma}$ of finitely supported functions on $G$ and a net $(T_\gamma:\cp\to\cp)_{\gamma\in\Gamma}$ of normal maps with the following properties:
\begin{itemize}
\item[\rm (i)]
For each $\gamma\in \Gamma$, $M_{f_\gamma}$ is completely bounded.
\item[\rm (ii)]
For each $g\in G$, $\underset{\gamma}{\lim}\,  f_\gamma(g)=1$.
\item[\rm (iii)]
For each function $h\in A(G)$, $\underset{\gamma}{\lim}\,  \|M_{f_\gamma}h-h\|=0$.
\item[\rm (iv)]
For each $\gamma\in\Gamma$, $T_\gamma$ is completely bounded.
\item[\rm (v)]
For each $y\in \cp$, $\underset{\gamma}{\lim}\,  T_\gamma(y)=y$ in the $w^*$-topology.
\item[\rm (vi)]
For each $x\in M$ and $g\in G$, $T_\gamma(\pi(x)\lambda_g)=f_\gamma(g)\pi(x)\lambda_g$.
\end{itemize}
\end{lem}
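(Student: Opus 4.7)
The proof rests on the definition of the approximation property and on the Haagerup--Kraus construction of normal completely bounded multiplier maps on tensor products \cite{HK}. First, by the $AP$ (and the equivalent formulations of \cite{HK}), I would extract a net $(f_\gamma)_{\gamma\in\Gamma}$ of finitely supported functions on $G$ converging to the constant function $1$ in the $\sigma(M_0(A(G)),Q(G))$-topology, with the additional property that $\|M_{f_\gamma} h - h\|_{A(G)} \to 0$ for every $h \in A(G)$; the passage from the weak-$\ast$ statement to this norm statement is a standard Mazur-type convex combination argument, noting that $\sup_\gamma \|f_\gamma\|_{M_0(A(G))} < \infty$ by Banach--Steinhaus. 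This choice yields (i), since finitely supported functions automatically lie in $M_0(A(G))$; (ii), since point evaluations at $g\in G$ correspond to elements of the canonical image $A(G)\hookrightarrow Q(G)$; and (iii) by construction.

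For the maps $T_\gamma$ I would use the spatial realization in \eqref{eq2.5} and \eqref{eq2.6}, which embeds $W(\cp)W^\ast$ as a $w^\ast$-closed subalgebra of $B(H)\ovltimes L(G)$. The Haagerup--Kraus theory associates to each $f \in M_0(A(G))$ a normal completely bounded map $\Phi_f$ on $L(G)$ with $\Phi_f(\ell_g) = f(g)\ell_g$, and the amplification $\mathrm{id}_{B(H)} \otimes \Phi_f$ extends to a normal CB map on $B(H)\ovltimes L(G)$ of $cb$-norm at most $\|f\|_{M_0(A(G))}$. Setting
\[
T_\gamma(y) = W^\ast\bigl(\mathrm{id}_{B(H)} \otimes \Phi_{f_\gamma}\bigr)(WyW^\ast)W, \qquad y\in\cp,
\]
produces normal CB maps. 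Evaluating on the generators via \eqref{eq2.5} and \eqref{eq2.6} gives $T_\gamma(\pi(x)\lambda_g) = f_\gamma(g)\pi(x)\lambda_g$, and since only finitely many group elements occur in each $f_\gamma$ the range is visibly contained in $\cp$; this establishes (iv) and (vi).

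The main obstacle is (v). The plan is to show that for each $y \in \cp$ and each $\omega \in (\cp)_\ast$ the functional $f \mapsto \omega(T_f(y))$ on $M_0(A(G))$ is represented by an element $u_{y,\omega}\in Q(G)$, so that it is $\sigma(M_0(A(G)),Q(G))$-continuous. This amounts to the $w^\ast$-$w^\ast$ continuity of the slice-map assignment $f \mapsto \mathrm{id}_{B(H)}\otimes\Phi_f$, which is exactly the content of the Haagerup--Kraus duality $M_0(A(G)) = Q(G)^\ast$ together with its tensor-product extension; producing the precise element $u_{y,\omega}\in Q(G)$ is the one nontrivial step. Granting this, $f_\gamma \to 1$ in $\sigma(M_0(A(G)),Q(G))$ forces $\omega(T_{f_\gamma}(y)) \to \omega(T_1(y)) = \omega(y)$ for every $y$ and $\omega$, which is (v).
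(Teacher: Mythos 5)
Your overall strategy coincides with the paper's: both obtain the net $(f_\gamma)$ from Haagerup--Kraus, define $T_\gamma$ by conjugating the amplified multiplier map $\mathrm{id}_{B(H)}\otimes\Phi_{f_\gamma}$ on $B(H)\,\ovl{\otimes}\,L(G)$ by the unitary $W$ of \eqref{eq2.4}, verify (vi) on the generators $\pi(x)\lambda_g$, and then let normality (together with Kaplansky density --- the containment of the range in $\cp$ is not quite ``visible'' from finite supports alone, since a general $y\in\cp$ is only a $w^*$-limit of finite sums $\sum\pi(x_g)\lambda_g$) carry this to all of $\cp$. For (v) the paper works on the predual side, identifying $(B(H)\,\ovl{\otimes}\,L(G))_*$ with $B(H)_*\,\hat{\otimes}_{op}\,A(G)$ and taking adjoints of $I\otimes M_{f_\gamma}$, whereas you work on the dual side by representing $f\mapsto\omega(T_f(y))$ by an element of $Q(G)$; these are two formulations of the same Haagerup--Kraus/Effros--Ruan duality, and the step you flag as ``the one nontrivial step'' is precisely what \cite[Theorem 1.9]{HK} and its surrounding results supply, so deferring to it is legitimate --- the paper leans on the same sources.

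There is, however, one genuine error: the assertion that $\sup_\gamma\|f_\gamma\|_{M_0(A(G))}<\infty$ ``by Banach--Steinhaus.'' This is false. A $\sigma(M_0(A(G)),Q(G))$-convergent \emph{net} need not be norm bounded (the uniform boundedness principle gives nothing for nets over general directed sets), and, more to the point, a net of finitely supported functions converging pointwise to $1$ with uniformly bounded multiplier norms exists exactly when $G$ is weakly amenable. Since there are groups with the $AP$ that are not weakly amenable, the claim cannot be repaired; if it were true, the lemma's extension from weakly amenable groups to $AP$ groups --- the entire point of the hypothesis, as the remark preceding the lemma explains --- would be vacuous. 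Fortunately the claim is only used to shortcut the Mazur argument for (iii): Mazur's theorem applied to a single $h\in A(G)$ needs no uniform bound, and the simultaneous statement for all $h$ is already contained in \cite[Theorem 1.9]{HK}, which is how the paper obtains (i)--(iii) in one stroke. Nothing else in your argument, in particular the $Q(G)$-duality route to (v), relies on the uniform bound, so the proof stands once this justification is excised and replaced by the citation.
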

\begin{proof}
The existence of such a net of functions satisfying (i)-(iii) is \cite[Theorem 1.9]{HK}. The predual of $B(H)\ovl{\otimes}L(G)$ is the operator space projective tensor
product $B(H)_*\,\hat{\otimes}_{op}\,A(G)$, \cite{ER}. Thus the operators $I\otimes M_{f_\gamma}$ on $B(H)_*\,\hat{\otimes}_{op}\,A(G)$ are  completely bounded  and converge
to $I\otimes I$ in the point norm topology. Let $S_\gamma$ denote the normal map
$(I\otimes M_{f_\gamma})^*:B(H)\ovl{\otimes}L(G)\to B(H)\ovl{\otimes}L(G)$. Then $\underset{\gamma}{\lim}\,  S_\gamma =I$ in the point $w^*$-topology and each $S_\gamma$ is completely bounded.

Using the operator $W$ of \eqref{eq2.4}, we define $T_\gamma:\cp\to W^*(B(H)\ovl{\otimes}L(G))W$ by
\begin{equation}\label{eq2.7}
T_\gamma(y)=W^*S_\gamma(WyW^*)W,\ \ \ \ \gamma\in \Gamma,\ y\in \cp.
\end{equation}
Clearly each $T_\gamma$ is a normal map. From the duality between $A(G)$ and $L(G)$, we see that $M_{f_\gamma}(\ell_g)=f_\gamma(g)\ell_g$, and so
\begin{equation}\label{eq2.8}
S_\gamma(x\otimes \ell_g)=x\otimes f_\gamma(g)\ell_g,\ \ \ \ x\in B(H),\ g\in G.
\end{equation}
Thus
\begin{align}
T_\gamma(\pi(x)\lambda_g)&=W^*S_\gamma(xa_g\otimes \ell_g)W=W^*(xa_g\otimes f_\gamma(g)\ell_g)W\notag\\
&=f_\gamma(g)\pi(x)\lambda_g,\ \ \ \ \gamma\in \Gamma,\ x\in M,\ g\in G,\label{eq2.9}
\end{align}
using \eqref{eq2.5}-\eqref{eq2.8}. Then normality shows that $T_\gamma$  maps $\cp$ to itself and establishes (vi). The remaining properties (iv) and (v) follow from the corresponding
properties of $S_\gamma$.
\end{proof}
Henceforth we will ease notation and write generating elements of $\cp$ as $xg$, $x\in M$, $g\in G$, subject to the relation $gx=\alpha_g(x)g$.

\section{The Bures topology}\label{sec3}
In this section we investigate a topology, introduced by Bures \cite{Bur}, which was shown by Mercer \cite{Mer} to be the appropriate one for questions of convergence of Fourier series in crossed products. The setting is an inclusion $M\subseteq N$ of von Neumann algebras for which there exists a faithful, normal conditional expectation $E:N\to M$. Such a conditional expectation always exists when $N$ is the crossed product $\cp$ of $M$ by a discrete group $G$ and this is the primary situation where we will use this topology. The {\emph{Bures topology}}, or $B$-{\emph{topology}} as we will call it, is defined by the seminorms $x\mapsto \phi(E(x^*x))^{1/2}$, $x\in N$, where $\phi$ ranges over the normal states of $M$. Thus a set $U\subseteq N$ is $B$-open if, and only if, given $x_0\in U$, there exist normal states $\phi_1,\ldots,\phi_n\in M_*$ and $\vp >0$ so that 
\begin{equation}\label{eq3.1}
\cap_{i=1}^n \sett{y:\phi_i(E((y-x_0)^*(y-x_0)))<\vp^2}\subseteq U.
\end{equation}
The objective of the following two lemmas is to establish the relationship of the $B$-topology to the $w^*$-topology.
\begin{lem}\label{lem3.1}
Let $M\subseteq N$ be von Neumann algebras with a faithful, normal conditional expectation $E:N\to M$. Let $K$ be a convex subset of $N$. If $K$ is $B$-closed, then it is $w^*$-closed.
\end{lem}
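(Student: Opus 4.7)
My plan is to apply the geometric Hahn--Banach separation theorem in the locally convex topological vector space $(N,\tau_B)$, where $\tau_B$ denotes the Bures topology. Since $\tau_B$ is generated by the family of seminorms $p_\phi(x)=\phi(E(x^*x))^{1/2}$ with $\phi$ ranging over normal states of $M$, $(N,\tau_B)$ is indeed locally convex. If $K$ is convex and $B$-closed and $x_0\notin K$, then Hahn--Banach produces a $\tau_B$-continuous linear functional $f$ and $\alpha\in\mathbb{R}$ with $\operatorname{Re} f(x_0)>\alpha\geq\operatorname{Re} f(x)$ for all $x\in K$. The whole problem then reduces to identifying the topological dual: if I can show that every $\tau_B$-continuous linear functional on $N$ is automatically $w^*$-continuous (normal), then $f$ furnishes a $w^*$-separation of $x_0$ from $K$, so $x_0$ is not in the $w^*$-closure of $K$, and $K$ is $w^*$-closed.

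The main (and essentially only) obstacle is this identification of $\tau_B$-continuous functionals with normal ones. By $\tau_B$-continuity at $0$, such an $f$ satisfies an inequality
\[
|f(x)|\leq C\max_{1\leq i\leq n}p_{\phi_i}(x),\qquad x\in N,
\]
for some normal states $\phi_1,\ldots,\phi_n$ on $M$ and some constant $C>0$. Setting $\omega=\phi_1+\cdots+\phi_n$, a normal positive functional on $M$, squaring and summing gives the key estimate $|f(x)|^2\leq C^2(\omega\circ E)(x^*x)$, where $\omega\circ E$ is a normal positive functional on $N$ because $E$ is normal. A standard GNS/Cauchy--Schwarz argument then concludes: letting $(\pi,\mathcal{H},\xi)$ be the GNS triple of $\omega\circ E$, so that $\pi$ is a normal representation and $(\omega\circ E)(x^*x)=\|\pi(x)\xi\|^2$, the bound $|f(x)|\leq C\|\pi(x)\xi\|$ lets $f$ factor through $\pi(N)\xi$, extend by density to a bounded functional on $\mathcal{H}$, and be represented via Riesz as $f(x)=\langle\pi(x)\xi,\eta\rangle$ for some $\eta\in\mathcal{H}$. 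This is a vector functional composed with a normal representation, hence $w^*$-continuous.

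The remaining details are routine: the real-linear form produced by Hahn--Banach is converted to a complex-linear functional by the usual trick, and the observation that it is normal completes the argument. Convexity is genuinely needed: without it Hahn--Banach is unavailable, and indeed $\tau_B$ is strictly finer than the $w^*$-topology on arbitrary subsets (for example in the crossed-product setting a net of distinct generators $\lambda_g$ tends to $0$ in the $w^*$-topology while the $B$-seminorm of each $\lambda_g$ equals $1$).
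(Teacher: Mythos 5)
Your proof is correct and is essentially the paper's argument in different packaging: the GNS Hilbert space you build for $\omega\circ E$ is exactly the completion of $N$ modulo null vectors with respect to the semi-inner product $\langle x,y\rangle=\phi(E(y^*x))$ that the paper uses, and your identification of the $\tau_B$-continuous functionals with the normal ones is precisely the paper's concluding step of showing the separating functional $\psi$ lies in $N_*$ (the paper verifies this by norm-approximating $\psi$ with the visibly normal functionals $x\mapsto\phi(E(y_n^*x))$, you by exhibiting it as a vector functional in a normal representation). The only organizational difference is that you invoke Hahn--Banach in the locally convex space $(N,\tau_B)$ and then compute its dual, whereas the paper first averages the finitely many states into one, passes to the associated Hilbert space, and separates there; both routes rest on the same two ingredients.
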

\begin{proof}
Suppose not. Then there exists $x_0\in \ovl{K}^{w^*}\setminus K$. By hypothesis, the complement $K^c$ is $B$-open and so there exist normal states $\phi_1,\ldots,\phi_n\in M_*$ and $\vp>0$ so that 
\begin{equation}\label{eq3.2}
\cap_{i=1}^n\sett{y:\phi_i(E((y-x_0)^*(y-x_0)))<\vp^2}\subseteq K^c.
\end{equation}
Define a normal state $\phi\in M_*$ as the average of the $\phi_i$'s, and let $\delta=\vp/\sqrt{n}$. Then \eqref{eq3.2} implies that
\begin{equation}\label{eq3.3}
\sett{y:\phi(E((y-x_0)^*(y-x_0)))<\delta^2}\subseteq K^c.
\end{equation}
On $N$, define a semi-inner product by
\begin{equation}\label{eq3.4}
\langle x,y\rangle=\phi(E(y^*x)),\ \ \ \ x,y\in N,
\end{equation}
and let $X\subseteq N$ be the subspace of null vectors. Then the quotient space $N/X$ has an induced inner product and we write $H$ for the Hilbert space completion. Let $\hat K$ be the image of $K$ in $H$ and note that \eqref{eq3.3} gives
\begin{equation}\label{eq3.5}
d(x_0+X,\hat K)\geq \delta.
\end{equation}
By Hahn-Banach separation there exist real constants $\alpha$ and $\beta$ with $\beta >0$ and a vector $\xi\in H$ so that 
\begin{equation}\label{eq3.6}
{\mathrm{Re}}\,\langle x_0+X,\xi\rangle \geq \alpha+\beta>\alpha \geq {\mathrm{Re}}\,\langle k+X,\xi\rangle,\ \ \ \ k\in K.
\end{equation}
Define a linear functional $\psi: N\to \mathbb{C}$ by
\begin{equation}\label{eq3.7}
\psi(x)=\langle x+X,\xi\rangle,\ \ \ \ x\in N.
\end{equation}
Then \eqref{eq3.6} becomes
\begin{equation}\label{eq3.8} 
{\mathrm{Re}}\,\psi(x_0)\geq \alpha+\beta >\alpha\geq {\mathrm{Re}}\,\psi (k),\ \ \ \ k\in K,
\end{equation}
and so it suffices to show that $\psi\in N_*$ in order to reach a contradiction.
Choose a sequence $y_n\in N$, $n\geq 1$, so that $y_n+X\to \xi$ in $H$ and $\|y_n+X\|_H\leq \|\xi\|_H$. Define $\psi_n\in N_*$ by $\psi_n(x)=\phi(E(y^*_nx))$, $x\in N$. Then, for $x\in
N$,
\begin{align}
|\psi_n(x)-\psi(x)|^2&=|\langle x+X,(y_n+X)-\xi\rangle|^2\notag\\
&\leq \|x\|^2_H\|(y_n+X)-\xi\|^2_H\notag\\
&\leq \|x\|^2\|(y_n+X)-\xi\|^2_H,\label{eq3.9}
\end{align}
showing that $\|\psi_n-\psi\|\to 0$. Thus $\psi\in N_*$, completing the proof.
\end{proof}

\begin{lem}\label{lem3.2}
Let $M\subseteq N$ be von Neumann algebras with a faithful, normal conditional expectation $E:N\to M$. Let $Y\subseteq N$ be a norm bounded subset. If $Y$ is $w^*$-closed, then it is $B$-closed.
\end{lem}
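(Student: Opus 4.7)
The plan is to show that if $x_0 \in N$ lies in the $B$-closure of $Y$, then $x_0 \in Y$. To this end, I would choose a net $(y_\lambda)$ in $Y$ with $y_\lambda \to x_0$ in the $B$-topology, i.e. $\phi(E((y_\lambda - x_0)^*(y_\lambda - x_0))) \to 0$ for every normal state $\phi$ on $M$. Because $Y$ is norm bounded, Banach--Alaoglu lets me pass to a subnet which converges $w^*$ to some $z \in N$, and $z \in Y$ by the $w^*$-closedness hypothesis. The crux is then to show $z = x_0$.

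The key technical ingredient is the $w^*$-lower semicontinuity on $N$ of each seminorm $y \mapsto \|y\|_\phi := \phi(E(y^*y))^{1/2}$, for $\phi$ a normal state on $M$. I would derive this by observing that $\psi := \phi \circ E$ is a normal state on $N$ (since $E$ is normal), forming its GNS triple $(\pi_\psi, H_\psi, \xi_\psi)$, and noting that $\|y\|_\phi = \|\pi_\psi(y)\xi_\psi\|$. Since $\pi_\psi$ is normal, for every $\eta \in H_\psi$ the functional $y \mapsto \langle \pi_\psi(y)\xi_\psi, \eta \rangle$ is a normal (hence $w^*$-continuous) functional on $N$; therefore $y \mapsto \pi_\psi(y)\xi_\psi$ carries the $w^*$-topology on $N$ to the weak topology on $H_\psi$, and weak lower semicontinuity of the Hilbert norm gives what is needed.

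Applying this to the subnet $y_\lambda - x_0 \to z - x_0$ in the $w^*$-topology yields
\[
\|z - x_0\|_\phi \;\leq\; \liminf_\lambda \|y_\lambda - x_0\|_\phi \;=\; 0
\]
for every normal state $\phi$ on $M$. Thus $E((z - x_0)^*(z - x_0))$ is a positive element of $M$ annihilated by every normal state of $M$, so it equals $0$, and faithfulness of $E$ then forces $z = x_0$. Hence $x_0 \in Y$, and $Y$ is $B$-closed.

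The main obstacle is the $w^*$-lower semicontinuity of the seminorms $\|\cdot\|_\phi$, but recasting the problem via the normal state $\phi \circ E$ and its GNS representation reduces it to the elementary fact that the Hilbert-space norm is weakly lower semicontinuous. The norm-boundedness hypothesis on $Y$ enters only to secure, via Banach--Alaoglu, the existence of a $w^*$-convergent subnet; it cannot be dispensed with, which is consistent with Lemma \ref{lem3.1} going in the opposite direction without such a restriction.
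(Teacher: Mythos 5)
Your proof is correct, and while its overall skeleton matches the paper's --- extract a $w^*$-convergent subnet from the $B$-convergent net using norm boundedness and Banach--Alaoglu, then identify the two limits --- the mechanism you use for the identification is genuinely different. The paper introduces the functionals $\phi_z(x)=\phi(E(zx))$, proves that their span $L$ is norm dense in $N_*$ (faithfulness of $E$ enters here), and then combines the Cauchy--Schwarz inequality for the state $\phi\circ E$ with the uniform norm bound to show that the $B$-convergent net itself converges to $x_0$ in the $w^*$-topology, so that the two limits coincide. You instead establish $w^*$-lower semicontinuity of each Bures seminorm $\|\cdot\|_\phi$ by realizing it as $\|\pi_\psi(\cdot)\xi_\psi\|$ in the GNS representation of the normal state $\psi=\phi\circ E$ and invoking weak lower semicontinuity of the Hilbert norm; applying this to the subnet forces $\|z-x_0\|_\phi=0$ for every normal state $\phi$, and faithfulness of $E$ only appears at the very end. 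Your route is arguably cleaner in that it avoids the density argument, and the lower semicontinuity statement is a reusable fact about the $B$-seminorms that holds with no boundedness assumption; the paper's route yields the slightly stronger byproduct, consonant with Remark \ref{rem3.3}(i), that a norm-bounded $B$-convergent net is automatically $w^*$-convergent to the same limit. Both arguments use boundedness only to secure the compactness needed for the subnet, as you correctly note.
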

\begin{proof}

Suppose that $x_0 \in N$ lies in the $B$-closure of $Y$.  Then there is a net $(x_{\lambda})_{\lambda \in \Lambda}$ in $Y$ converging to $x_0$ in the $B$-topology, and $w^*$-compactness allows us to drop to a subnet which also converges in the $w^*$-topology to some $y_0 \in Y.$   We will show that $x_0 = y_0.$  

Given an arbitrary normal state $\phi \in M_*,$ and $z \in N$, define $\phi_z \in N_*$ by 

\begin{equation}\label{eq3.10}
\phi_z(x)=\phi(E(zx)),\ \ \ \ x\in N,
\end{equation}
and denote by $L$ the span of these functionals in $N_*$ as $\phi$ and $z$ vary.  We claim that $L$ is norm dense in $N_*$.  Indeed, if it were not, then there would exist a nonzero $x_1 \in N$ such that

\begin{equation}\label{eq3.11}
\phi(E(zx_1))=0
\end{equation}
 for every normal state $\phi \in M_*$ and $z \in N.$  Taking $z = x_1^*$ and letting $\phi$ vary over $M_*,$ the faithfulness of $E$ would then imply that $x_1 = 0,$ a contradiction.    Thus, $L$ is dense in $N_*$.  
 
If $\phi$ is a normal state in $M_*$ and $z\in N$, then applying the Cauchy-Schwarz inequality to the state $\phi\circ E$ gives
\begin{align}
|\phi(E(z(x_\lambda -x_0)))|&\leq \phi(E(zz^*))^{1/2}\phi(E((x_\lambda -x_0)^*(x_\lambda -x_0)))^{1/2}\notag\\
&\leq \|z\|\phi(E((x_\lambda -x_0)^*(x_\lambda -x_0))).\label{eq3.12}
\end{align}
Since $x_\lambda \to x_0$ in the $B$-topology, this gives
\begin{equation}\label{eq3.13}
\lim_\lambda \psi(x_\lambda -x_0)=0,\ \ \ \ \psi\in L,
\end{equation}
and the uniform bound on $\|x_\lambda\|$ implies that $x_\lambda \to x_0$ in the $w^*$-topology from \eqref{eq3.13} and the norm density of $L$ in $N_*$. Thus $x_0=y_0$, as desired. \end{proof}
\begin{rem}\label{rem3.3}
(i) \quad Lemmas \ref{lem3.1}  and \ref{lem3.2} jointly imply that the $w^*$- and $B$-closures of norm bounded convex sets are equal, so differences in these topologies, at least for convex sets, appear only for unbounded sets. In principle, the $B$-topology on $N$ is dependent on the choice of the von Neumann subalgebra $M$ and of the normal conditional expectation onto it. These lemmas show, however, that for norm bounded convex sets the various possible $B$-topologies give the same closure. In the two situations where we will need the $B$-topology, the choices of subalgebras and conditional expectations will be canonical.

\medskip
 
(ii) \quad The converse of Lemma \ref{lem3.1} and the conclusion of Lemma \ref{lem3.2} for general $w^*$-closed sets are both false, as the following example shows. Let $N=L^\infty [0,1]$ and
$M=\mathbb{C}I$, and define $E:N\to M$ by
\begin{equation}\label{3.14}
E(f)=\left(\int_0^1 f(t)\,dt\right)I,\ \ \ \ f\in L^\infty [0,1].
\end{equation}
In this case, the $B$-topology on $L^\infty [0,1]$ is the restriction of the $\|\cdot\|_2$-norm topology on $L^2[0,1]$. Define a $w^*$-continuous functional $\phi$ on $L^\infty [0,1]$ by
integration against $t^{-1/2}\in L^1[0,1]\setminus L^2[0,1]$ and let $K$ be its kernel. Then $K$ is $\|\cdot\|_2$-dense in $L^2[0,1]$ since any nonzero vector orthogonal to $K$ would
have to be a multiple of $t^{-1/2}$ which is impossible. Thus, for example, the constant function 1 is not in the $w^*$-closed set $K$ but lies in its $B$-closure.
\end{rem}

\section{Weak$^*$-closed bimodules}\label{sec4}

In this section, we study the structure of certain $w^*$-closed bimodules arising in the crossed product setting.  In particular, given an outer action of a countable, discrete group $G$ on a factor $M$, any subset $S$ of $G$ gives rise to a $B$-closed $M$-bimodule 
\[  X_S = \ovl{{\mathrm{span}}}^B\sett{xg:x\in M,\ g\in S} \subseteq \cp. \]
It is natural to ask whether these are all the $B$-closed $M$-bimodules in the crossed product; that this is indeed the case is the main result of this section.

An important tool for our investigation here is a result of Christensen and Sinclair \cite{CS} which we state for the reader's convenience after establishing some notation. For a
von Neumann algebra $M$, let $L_{cb}(M,M)$ denote the space of completely bounded linear maps $\phi:M\to M$, while $L_{cb}(M,M)_M$ is the subspace of right $M$-module maps, those that
satisfy $\phi(xm)=\phi(x)m$ for $x,m\in M$. These of course have a very simple form, $\phi(x)=tx$ for $x\in M$ and an element $t=\phi(I)\in M$. Each collection $\beta=(m_j)_{j\in J}$
from $M$ satisfying $\sum_{j\in J}m_j^*m_j=I$ induces a map $\phi \mapsto\phi^\beta$ on $L_{cb}(M,M)$, where
\begin{equation}\label{eq4.1}
\phi^\beta(x)={\textstyle\sum_{j\in J}}\,\phi(xm_j^*)m_j,\ \ \ \ x\in M.
\end{equation}
The complete boundedness of $\phi$ guarantees that $\phi^\beta$ is well defined and satisfies $\|\phi^\beta\|_{cb}\leq \|\phi\|_{cb}$. For a fixed element $c\in M$, we denote by $\phi_c$
the map $\phi_c(x)=\phi(cx)$ for $x\in M$. In the following theorem, the first two parts constitute a special case of \cite[Theorem 3.3]{CS}, (see also \cite[Theorem 1.7.4]{SS}), while the third part is just an observation
that we will need subsequently.
\begin{thm}\label{thm4.1}
Let $M$ be a von Neumann algebra.
\begin{itemize}
\item[\rm (i)]
There exists a contractive projection $\rho:L_{cb}(M,M)\to L_{cb}(M,M)_M$.
\item[\rm (ii)]
There is a net $(\beta)$ so that
\begin{equation}\label{eq4.2}
\rho\phi(x)=w^*-\lim_\beta \phi^\beta(x),\ \ \ \ \phi\in L_{cb}(M,M),\ x\in M.
\end{equation}
\item[\rm (iii)]
If $c,d\in M$ and $\phi\in L_{cb}(M,M)$ satisfy
\begin{equation}\label{eq4.3}
\phi(cx)=d\phi(x),\ \ \ \ x\in M,
\end{equation}
then
\begin{equation}\label{eq4.4}
\rho\phi_c=d\rho\phi.
\end{equation}
\end{itemize}
\end{thm}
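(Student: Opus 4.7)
The plan is to invoke the Christensen--Sinclair theorem for parts (i) and (ii), and to derive (iii) as a direct consequence of the averaging construction used in (ii).

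For (i) and (ii), I would first observe that for each family $\beta=(m_j)_{j\in J}$ in $M$ with $\sum_{j\in J}m_j^*m_j=I$, the assignment $\phi\mapsto\phi^\beta$ is a contraction on $L_{cb}(M,M)$ in the $cb$-norm, with $\|\phi^\beta\|_{cb}\leq\|\phi\|_{cb}$. The collection of such $\beta$ is directed under a natural refinement order. Since $L_{cb}(M,M)$ is a dual space, with predual given by the operator space projective tensor product $M_*\,\hat\otimes_{op}\,M$, its closed unit ball is point-$w^*$-compact, so I can pass to a subnet and define $\rho\phi(x)$ as the point-$w^*$-limit of $\phi^\beta(x)$. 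A short computation exploiting $\sum_j m_j^*m_j=I$ shows that each $\phi^\beta$, and hence $\rho\phi$, is a right $M$-module map. Conversely, if $\phi$ is already such a map then $\phi^\beta(x)=\sum_j\phi(x)m_j^*m_j=\phi(x)$ for every $\beta$, so $\rho$ restricts to the identity on $L_{cb}(M,M)_M$ and is therefore a contractive projection onto that subspace. This is the content of \cite[Theorem 3.3]{CS} specialized to the present setting.

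For part (iii), which is the new observation, I would argue directly from the formula defining $\phi^\beta$. Fix $c,d\in M$ and $\phi\in L_{cb}(M,M)$ satisfying $\phi(cx)=d\phi(x)$ for all $x\in M$. For any admissible $\beta=(m_j)_{j\in J}$ and any $x\in M$, applying the hypothesis with $xm_j^*$ in place of $x$ yields
\[
\phi_c^\beta(x)=\sum_{j\in J}\phi_c(xm_j^*)m_j=\sum_{j\in J}\phi(cxm_j^*)m_j=\sum_{j\in J}d\phi(xm_j^*)m_j=d\phi^\beta(x).
\]
Taking the point-$w^*$-limit along the net provided by (ii) and using the $w^*$-continuity of left multiplication by $d$ gives $\rho\phi_c(x)=d\cdot\rho\phi(x)$ for every $x\in M$, which is precisely \eqref{eq4.4}.

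There is no genuine obstacle here: parts (i) and (ii) are an appeal to an established deep result, and (iii) is a one-line computation. The only conceivable subtlety is that the subnet in (ii) is chosen for a given $\phi$; however, the identity $\phi_c^\beta=d\phi^\beta$ holds for \emph{every} $\beta$, so any subnet that realizes $\rho\phi$ simultaneously realizes $\rho\phi_c$, and no separate choice is required.
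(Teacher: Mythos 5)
Your handling of part (iii) is correct and coincides exactly with the paper's proof: the paper likewise notes that $\sum_{j}\phi_c(xm_j^*)m_j=\sum_{j}d\phi(xm_j^*)m_j$ for every admissible $\beta$ and then takes $w^*$-limits via (ii), and your observation that $w^*$-continuity of left multiplication by $d$ is what justifies passing the limit through is a welcome bit of explicitness. Parts (i) and (ii) are, in the paper as in your proposal, simply quoted from \cite[Theorem 3.3]{CS}, so deferring to that reference is the intended route. One sentence in your gloss on the Christensen--Sinclair construction is, however, false: an individual $\phi^\beta$ is \emph{not} a right $M$-module map, and no short computation from $\sum_j m_j^*m_j=I$ will make it one, since $\phi^\beta(xm)=\sum_j\phi(xmm_j^*)m_j$ places $m$ inside the argument of $\phi$ where the identity $\sum_j m_j^*m_j=I$ cannot reach it. (For instance, if $\phi=\alpha$ is an automorphism then $\phi^\beta(x)=\alpha(x)\sum_j\alpha(m_j^*)m_j$, which is a right module map only when $\sum_j\alpha(m_j^*)m_j$ intertwines $\alpha$ with the identity --- precisely the situation ruled out in the limit by Lemma \ref{lem4.2}, but not for a single finite $\beta$.) The module property of $\rho\phi$ emerges only in the limit: the families $\beta$ compose, $(\phi^\beta)^\gamma=\phi^{\beta\circ\gamma}$, and Christensen--Sinclair extract a limit point invariant under all further averagings, which forces right $M$-modularity (take $\gamma$ to consist of a single unitary). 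Since you ultimately rest (i) and (ii) on the citation rather than on this sketch, the overall proof stands, but the sketch would not survive as a self-contained argument. Your closing remark about the subnet is harmless but unnecessary: statement (ii) already provides a single net valid for all $\phi$ simultaneously, and in any case, as you say, the identity $\phi_c^\beta=d\phi^\beta$ holds for every $\beta$.
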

\begin{proof}
As noted above, only the last part requires proof, so suppose that $\phi\in L_{cb}(M,M)$ satisfies \eqref{eq4.3}. For each $\beta=(m_j)_{j\in J}$,
\begin{equation}\label{eq4.5}
{\textstyle\sum_{j\in J}}\,\phi_c(xm_j^*)m_j={\textstyle\sum_{j\in J}}\,d\phi(xm_j^*)m_j,\ \ \ \ x\in M,
\end{equation}
so using (ii) and taking $w^*$-limits in \eqref{eq4.5} gives \eqref{eq4.4} as required.
\end{proof}
We now apply this result to specific maps that will occur subsequently.
\begin{lem}\label{lem4.2}
Let $M$ be a von Neumann algebra, let $\alpha$ be a properly outer automorphism of $M$ and let $a\in M$ be fixed. If $\phi:M\to M$ is the completely bounded map defined by
\begin{equation}\label{eq4.6}
\phi(x)=\alpha(x)a,\ \ \ \ x\in M,
\end{equation}
then $\rho\phi=0$.
\end{lem}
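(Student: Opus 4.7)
My plan is to compute $\rho\phi$ explicitly using the averaging formula of Theorem \ref{thm4.1}(ii), and then force $\rho\phi=0$ via the freeness characterization of proper outerness.

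First I would evaluate $\phi^\beta$ on the net $\beta=(m_j)_{j\in J}$ that implements $\rho$. Since $\phi(x)=\alpha(x)a$ and $\alpha$ is an automorphism, for each $x\in M$
\[
\phi^\beta(x)={\textstyle\sum_{j\in J}}\alpha(xm_j^*)am_j=\alpha(x){\textstyle\sum_{j\in J}}\alpha(m_j^*)am_j=\alpha(x)\phi^\beta(I),
\]
so on each level of the net, $\phi^\beta$ factors as left multiplication by $\alpha(x)$ applied to a fixed element $\phi^\beta(I)\in M$.

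Next I would set $t:=\rho\phi(I)=w^*\text{-}\lim_\beta \phi^\beta(I)\in M$ and take $w^*$-limits in the displayed identity. Because left multiplication by the fixed element $\alpha(x)$ is $w^*$-continuous, the limit passes inside, yielding
\[
\rho\phi(x)=w^*\text{-}\lim_\beta \alpha(x)\phi^\beta(I)=\alpha(x)t,\qquad x\in M.
\]
On the other hand, $\rho\phi\in L_{cb}(M,M)_M$ is a right $M$-module map, so $\rho\phi(x)=tx$. Combining these two expressions gives
\[
tx=\alpha(x)t,\qquad x\in M.
\]

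Finally I would invoke the fact, recalled in Section \ref{sec2}, that proper outerness of $\alpha$ is equivalent to freeness: no nonzero $t\in M$ can satisfy $tx=\alpha(x)t$ for every $x\in M$. Hence $t=0$ and $\rho\phi=0$. The only nontrivial step is the first, where one must recognize that the automorphism property of $\alpha$ allows $\alpha(x)$ to be factored out of the averaging sum, producing the twisted intertwining relation that, together with the right-module property of $\rho\phi$, lets freeness do the work. One could alternatively deploy part (iii) of Theorem \ref{thm4.1} via $\phi(cx)=\alpha(c)\phi(x)$, but this only yields $\rho\phi_c=\alpha(c)\rho\phi$, which does not on its own collapse to an intertwining equation for $t$; the explicit computation of $\phi^\beta$ seems unavoidable.
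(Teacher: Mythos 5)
Your proof is correct and is essentially the paper's argument: both derive the intertwining relation $tx=\alpha(x)t$ for $t=\rho\phi(I)$ (using that $\rho\phi$ is a right $M$-module map) and then conclude $t=0$ from the equivalence of proper outerness with freeness. The only cosmetic difference is that the paper reaches this relation by combining Theorem \ref{thm4.1}(iii) (applied to $\phi(yx)=\alpha(y)\phi(x)$) with the identity $\rho\phi_y(x)=\rho\phi(yx)$ coming from the averaging formula, so your closing claim that part (iii) cannot be made to work is slightly off --- but your direct computation of $\phi^\beta$ is just that same argument unwound.
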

\begin{proof}
Since $\rho\phi$ is a right $M$-module map, there exists $t\in M$ so that
\begin{equation}\label{eq4.7}
\rho\phi(x)=tx,\ \ \ \ x\in M.
\end{equation}
For each $y\in M$,
\begin{equation}\label{eq4.8}
\phi(yx)=\alpha(y)\alpha(x)a=\alpha(y)\phi(x),\ \ \ \ x\in M,
\end{equation}
so from Theorem \ref{thm4.1} (iii) we obtain
\begin{equation}\label{eq4.9}
\rho\phi_y(x)=\alpha(y)\rho\phi(x)=\alpha(y)tx,\ \ \ \ x\in M.
\end{equation}
 From Theorem \ref{thm4.1} (ii),
\begin{equation}\label{eq4.10}
\rho\phi_y(x)=\rho\phi(yx),\ \ \ \ x\in M,
\end{equation}
so
\begin{equation}\label{eq4.11}
\alpha(y)tx=tyx,\ \ \ x,y\in M.
\end{equation}
Put $x=1$ in \eqref{eq4.11} to obtain that $\alpha(y)t=ty$ for $y\in M$. This implies that $t=0$ since $\alpha$ is properly outer, and so $\rho\phi=0$ from \eqref{eq4.7}.
\end{proof}
If $G$ is a discrete group acting on a factor $M$ by outer automorphisms, then each $x\in \cp$ has a Fourier series $x=\sum_{g\in G}x_gg$, where convergence takes place in the $B$-topology
for the standard normal conditional expectation $E:\cp\to M$ \cite{Mer}. The $g$-coefficients $x_g\in M$ are given by $x_g=E(xg^{-1})$.
\begin{thm}\label{thm4.3}
Let a discrete group $G$ act on a factor $M$ by outer automorphisms $\alpha_g$, $g\in G$, and let $X\subseteq \cp$ be an intermediate $M$-bimodule which is either closed in the $B$-topology or the
$w^*$-topology. If $g_0\in G$ is such that there exists $x\in X$ with a nonzero $g_0$-coefficient, then $g_0\in X$.
\end{thm}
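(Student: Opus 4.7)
The plan is to produce $g_0$ as the limit of a bimodule averaging of $x$ that annihilates every Fourier component except the $g_0$-th, using the Christensen--Sinclair net of Theorem \ref{thm4.1}(ii) and the outer-automorphism input of Lemma \ref{lem4.2}. Both closure hypotheses can be treated uniformly, since on convex norm-bounded subsets the $B$- and $w^*$-topologies coincide (Lemmas \ref{lem3.1}--\ref{lem3.2}).

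If $g_0=e$ then $g_0=1\in M\subseteq X$, so I assume $g_0\ne e$, which makes $\alpha_{g_0}$ properly outer. Replacing $x$ by the bimodule element $x\,\alpha_{g_0^{-1}}(x_{g_0}^*)\in X$, whose $g_0$-coefficient is the positive nonzero element $x_{g_0}x_{g_0}^*$, I may assume $x_{g_0}\ge 0$ and nonzero. Let $\beta=(m_j)_{j\in J}$ be the Christensen--Sinclair net of Theorem \ref{thm4.1}(ii), so that $\sum_j m_j^*m_j=I$, and set
\[
y_\beta \;=\; \sum_{j} m_j^*\, x\,\alpha_{g_0^{-1}}(m_j)\;\in\; X.
\]
Using $g\,\alpha_{g_0^{-1}}(m_j)=\alpha_{g g_0^{-1}}(m_j)\,g$ one computes $(y_\beta)_{g_0}=\sum_j m_j^*x_{g_0}m_j$ and, for $g\ne g_0$, $(y_\beta)_g=\sum_j m_j^*x_g\alpha_{gg_0^{-1}}(m_j)$, in which $\alpha_{gg_0^{-1}}$ is properly outer. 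Applying Lemma \ref{lem4.2} to $\phi_g(m)=\alpha_{gg_0^{-1}}(m)\,x_g^*$ gives $\rho\phi_g=0$, so $\phi_g^\beta(1)=\sum_j\alpha_{gg_0^{-1}}(m_j^*)\,x_g^*\,m_j\to 0$ in $w^*$; taking adjoints, $(y_\beta)_g\to 0$ for every $g\ne g_0$. The same net realizes the C--S projection of right multiplication $m\mapsto mx_{g_0}$: one obtains $\sum_j m_j^*x_{g_0}m_j\to s$ in $w^*$ with $s$ commuting with every $m\in M$, hence (since $M$ is a factor) $s=cI$ for a scalar $c\in\C$.

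The identities $\sum_j m_j^*m_j=I=\sum_j\alpha_{g_0^{-1}}(m_j)^*\alpha_{g_0^{-1}}(m_j)$ and the standard row/column estimate give $\|y_\beta\|\le\|x\|$, so the net is norm bounded. Coefficient-wise $w^*$-continuity together with $w^*$-compactness of bounded sets force $y_\beta\to c\,g_0$ in the $w^*$-topology, and Lemma \ref{lem3.2} upgrades this to $B$-convergence. Thus, whether $X$ is $B$-closed (hence $w^*$-closed by Lemma \ref{lem3.1} applied to the convex set $X$) or $w^*$-closed, one concludes $c\,g_0\in X$, and dividing by the scalar produces $g_0\in X$ provided $c\ne 0$. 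The main obstacle is precisely this nonvanishing: for a type II$_1$ factor it is automatic since $c$ is forced to be $\tau(x_{g_0}x_{g_0}^*)>0$ by uniqueness and faithfulness of the trace, while in the general factor case $c$ is only an abstract state produced by the averaging, and one must verify separately that the C--S net (or a further bimodule modification of $x$) can be arranged so that the ideal $\{z\in M:zg_0\in X\}$, which is $w^*$-closed and two-sided, is nonzero --- and hence all of $M$.
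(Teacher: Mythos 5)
Your overall strategy is the paper's: average $x$ to $x^\beta=\sum_j m_j^*x\,\alpha_{g_0^{-1}}(m_j)$ over the Christensen--Sinclair net, kill the $g\ne g_0$ coefficients via Lemma \ref{lem4.2}, and extract a $w^*$-limit in $X$ using the norm bound. But the point you flag at the end as ``the main obstacle'' is a genuine gap, not a loose end: as written, the argument only produces $c\,g_0\in X$ for a scalar $c$ that may well be $0$, and neither of your suggested repairs works. The trace argument in the II$_1$ case fails because the averaging $a\mapsto\sum_j m_j^*am_j$ subject to $\sum_j m_j^*m_j=I$ is \emph{not} trace-preserving (one has $\tau(\sum_j m_j^*am_j)=\tau(a\sum_j m_jm_j^*)$, and nothing in Theorem \ref{thm4.1} controls $\sum_j m_jm_j^*$; e.g.\ partial isometries $v_i$ with $\sum_i v_i^*v_i=I$ and all range projections under $p^\perp$ give $\sum_i v_i^*pv_i=0$ while $\tau(p)>0$). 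And the observation that $\{z\in M:zg_0\in X\}$ is a $w^*$-closed two-sided ideal, hence all of $M$ once it is nonzero, merely restates the problem: producing one nonzero element of that ideal is exactly what has to be proved.

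The missing idea is a normalization of the $g_0$-coefficient \emph{before} averaging, which is where the factoriality of $M$ and the bimodule structure are really used. Having arranged $x_{g_0}\ge 0$ and nonzero, a left multiplication by a suitable element of $M$ (the inverse of $x_{g_0}$ cut down to a spectral projection for $[\vp,\infty)$) makes $x_{g_0}$ a nonzero projection $p$. Since $M$ is a factor, there are partial isometries $(v_k)_{k\in K}$ with $v_k^*v_k\le p$ and $\sum_k v_kv_k^*=I$, and the element $\sum_k v_k x\,\alpha_{g_0^{-1}}(v_k^*)\in X$ has $g_0$-coefficient $\sum_k v_kpv_k^*=I$. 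After this reduction the $g_0$-coefficient of \emph{every} $x^\beta$ is exactly $\sum_j m_j^*m_j=I$, so the limit $y$ has Fourier series $I\cdot g_0$ and equals $g_0$; there is no scalar $c$ whose nonvanishing needs to be checked. With that step inserted, the rest of your argument goes through and coincides with the proof in the paper.
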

\begin{proof}
By Lemma \ref{lem3.1}, it suffices to consider the case when $X$ is $w^*$-closed. Suppose that $x\in X$ has a nonzero $g_0$-coefficient. Multiplying on the left by $x_{g_0}^*$, we may assume
that $x_{g_0}\geq 0$, and a further left multiplication by a suitable element of $M$ allows us to assume that $x_{g_0}$ is a nonzero projection $p$. Since $M$ is a factor, there exist
partial isometries $(v_k)_{k\in K}$ so that $v_k^*v_k\leq p$ and $\sum_{k\in K}v_kv_k^*=I$, so
$\sum_{k\in K}v_kx\alpha_{g_0^{-1}}(v_k^*)\in X$ (since $\alpha_{g_0^{-1}}$ is completely bounded), and the $g_0$-coefficient of this element is $I$. Thus we may assume that $x_{g_0}=I$.

Let $\beta=(m_j)_{j\in J}$ be the net from Theorem \ref{thm4.1}, pre- and post-multiply $x$ by $m_j^*$ and $\alpha_{g_0^{-1}}(m_j)$ and sum over $j$ to obtain elements $x^\beta\in X$ given by
\begin{equation}\label{eq4.12}
x^\beta ={\textstyle\sum_{j\in J}}\,m_j^*x\alpha_{g_0^{-1}}(m_j).
\end{equation}
The $g$-coefficient of $x^\beta$ is $\sum_{j\in J}m_j^*x_g\alpha_{gg_0^{-1}}(m_j)$. For $g=g_0$, this is $\sum_{j\in J}m_j^*m_j=I$, while for $g\ne g_0$ we may write this as
\begin{equation}\label{eq4.13}
\alpha_{gg_0^{-1}}\left({\textstyle\sum_{j\in J}}\,\alpha_{g_0g^{-1}}(m_j^*)\alpha_{g_0g^{-1}}(x_g)m_j\right).
\end{equation}
If we define $\phi^g\in L_{cb}(M,M)$ by
\begin{equation}\label{eq4.14}
\phi^g(t)=\alpha_{g_0g^{-1}}(t)\alpha_{g_0g^{-1}}(x_g),\ \ \ \ t\in M,
\end{equation}
then the $w^*$-limit over $\beta$ in \eqref{eq4.13} is $\alpha_{gg_0^{-1}}(\rho\phi^g(I))$, using the $w^*$-continuity of $\alpha_{gg_0^{-1}}$, and this is 0 by applying Lemma \ref{lem4.2}
to the outer automorphism $\alpha_{g_0g^{-1}}$ and the fixed element $a=\alpha_{g_0g^{-1}}(x_g)$. Since $\|x^\beta\|\leq \|x\|$, we may drop to a subnet and assume in addition that
$x^\beta\to y\in X$ in the $w^*$-topology. If $y$ has Fourier series $\sum_{g\in G}y_gg$, then
\begin{equation}\label{eq4.15}
y_g=E(yg^{-1})=\lim_\beta E(x^\beta g^{-1})
\end{equation}
by $w^*$-continuity of $E$ and so, from above, $y_g=I$ for $g=g_0$ and is 0 otherwise. Thus $g_0=y\in X$ as required.
\end{proof}
\begin{thm}\label{thm4.4}
Let $G$ be a discrete group acting by outer automorphisms $\alpha_g$, $g\in G$, on a factor $M$.
\begin{itemize}
\item[\rm (i)]
There is a bijective correspondence between subsets $S$ of $G$ and $B$-closed  $M$-bimodules of $\cp$ given by
\begin{equation}\label{eq4.16}
S\mapsto X_S := \ovl{{\mathrm{span}}}^B\sett{xg:x\in M,\ g\in S}.
\end{equation}
\item[\rm (ii)]
If $G$ has the $AP$,  then the collections of $B$-closed and $w^*$-closed bimodules coincide.
\end{itemize}
\end{thm}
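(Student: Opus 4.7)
The plan is to show that any $M$-bimodule $X \subseteq \cp$ which is either $B$-closed, or $w^*$-closed under the $AP$ hypothesis, equals $X_S$ with $S := \{g \in G : g \in X\}$, and conversely that $S = \{g : g \in X_S\}$. This will yield both the bijective correspondence in (i) and the coincidence of the two classes of bimodules in (ii).

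For $X \subseteq X_S$, take $y \in X$. Mercer's Fourier expansion $y = \sum_g y_g g$ (with $y_g = E(yg^{-1}) \in M$) converges in the $B$-topology, and Theorem \ref{thm4.3} applies under either closure hypothesis to give $y_g = 0$ for $g \notin S$; hence $y$ is a $B$-limit of approximants drawn from $\mathrm{span}\{xg : g \in S\} \subseteq X_S$, so $y \in X_S$. For $X_S \subseteq X$, the $B$-closed case is immediate from $\mathrm{span}\{xg : g \in S\} \subseteq X$ together with $B$-closedness. For $w^*$-closed $X$ (under the $AP$), I apply Lemma \ref{lem2.1} to $y \in X_S$. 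A Cauchy-Schwarz computation $|\phi(E(zg^{-1}))|^2 \leq (\phi \circ \alpha_g)(E(z^*z))$, using the identity $E(gz^*zg^{-1}) = \alpha_g(E(z^*z))$ for $z \in \cp$, shows the Fourier coefficient maps are continuous from $(\cp, B)$ to $(M, w^*)$, so $y \in X_S$ forces $y_g = 0$ for $g \notin S$. Both $T_\gamma$ and the map $y \mapsto \sum_g f_\gamma(g) y_g g$ are $w^*$-continuous on $\cp$ (the latter because $f_\gamma$ is finitely supported and each $y \mapsto f_\gamma(g) E(yg^{-1}) g$ is normal) and agree on generators $xg$ by Lemma \ref{lem2.1}(vi) and direct computation of Fourier coefficients; hence they agree throughout $\cp$ by $w^*$-density of $\mathrm{span}\{xg\}$. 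Therefore $T_\gamma(y) = \sum_{g \in \mathrm{supp}(f_\gamma) \cap S} f_\gamma(g) y_g g$ is a finite $M$-linear combination of elements of $S$, lying in $X$; since $T_\gamma(y) \to y$ in $w^*$ and $X$ is $w^*$-closed, $y \in X$. For injectivity, the same $B$-to-$w^*$ continuity of the $g_0$-coefficient map forces any $g_0 \in X_S$ to lie in $S$, since otherwise the $g_0$-coefficient of $g_0$ itself---equal to $I$---would have to vanish as a $w^*$-limit of the identically-zero $g_0$-coefficients from the approximating net in $\mathrm{span}\{xg : g \in S\}$.

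The main subtlety is the passage between $B$-closure and $w^*$-closure in part (ii): Remark \ref{rem3.3} shows that the $B$-closure can strictly exceed the $w^*$-closure for unbounded convex sets, so $X_S \subseteq X$ does not follow automatically from $\mathrm{span}\{xg : g \in S\} \subseteq X$ and the $w^*$-closedness of $X$. The $AP$ is precisely the ingredient that supplies Fej\'er-type smoothers $T_\gamma$ to bridge this gap, approximating elements of $X_S$ by common finite Fourier sums supported in $S$, which then lie in both $X_S$ and $X$.
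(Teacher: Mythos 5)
Your proposal is correct and follows essentially the same route as the paper: Theorem \ref{thm4.3} together with the $B$-convergence of Fourier series gives $X\subseteq X_S$, a Cauchy--Schwarz estimate on Fourier coefficients (the paper's \eqref{eq4.17} and \eqref{eq4.18}--\eqref{eq4.19}, recast by you as $B$-to-$w^*$ continuity of the coefficient maps) handles injectivity and the support of elements of the $B$-closure, and the $AP$ operators $T_\gamma$ with the identity $T_\gamma(z)=\sum_g f_\gamma(g)z_g g$ close the gap in part (ii) exactly as in the paper's \eqref{eq4.20}--\eqref{eq4.21}. The only differences are cosmetic reorganizations (proving $X_S\subseteq X$ rather than $\ovl{X}^B\subseteq X$, and deriving \eqref{eq4.21} by normality plus density of agreement instead of the explicit net computation).
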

\begin{proof}
(i) \quad Clearly the empty set corresponds to the $M$-bimodule $\sett{0}$. Now let $S_1$ and $S_2$ be distinct nonempty subsets of $G$ and suppose without loss of generality that $g_0\in S_1$ but that $g_0\notin S_2$. For a finite subset $F\subseteq S_2$, consider
a sum $\sum_{g\in F}x_gg$. Then
\begin{equation}\label{eq4.17}
E\left(\left(g_0-{\textstyle\sum_{g\in F}}\,x_gg\right)^*\left(g_0-{\textstyle\sum_{g\in F}}\,x_gg\right)\right)=I+
{\textstyle\sum_{g\in F}\,\alpha_{g^{-1}}(x_g^*x_g)}\geq I
\end{equation}
and so $g_0\notin \ovl{{\mathrm{span}}}^B\sett{xg:x\in M,\ g\in S_2}$. Thus the map of \eqref{eq4.16} is injective.

Suppose that $X$ is a $B$-closed $M$-bimodule. Then, by Theorem \ref{thm4.3}, $X$ contains each group element that appears with a nonzero coefficient in the Fourier series of some
element of $X$. Let $S$ denote the set of such elements. Then $S_X\subseteq X$. If $x\in X$ has Fourier series $\sum_{g\in G}x_gg$ then, restricting to the terms with nonzero
coefficients, we obtain that the finite partial sums lie in $X_S$ and converge to $x$ in the $B$-topology by \cite{Mer}. This shows that $X\subseteq X_S$, proving equality.

\medskip

\n (ii) \quad Suppose now that $G$ has the $AP$. By Lemma \ref{lem3.1}, each $B$-closed $M$-bimodule is $w^*$-closed so it is only necessary to prove the converse. Let $X\subseteq \cp$
be a $w^*$-closed $M$-bimodule. Let $S$ be the set of group elements that appear with a nonzero coefficient in the Fourier series of some element of $X$ and note that $S\subseteq X$ by
Theorem \ref{thm4.3}. Consider an element $y$ in the $B$-closure of $X$ with Fourier series $\sum_{g\in G}y_gg$, and suppose that $y_{g_0}\ne 0$ for some $g_0\in G\setminus S$. Choose a
normal state $\phi\in M_*$ such that $\phi(\alpha_{g_0}^{-1}(y_{g_0}^*y_{g_0}))>0$ and call this number $\vp >0$. Let $x=\sum_{g\in G}x_gg$ be an arbitrary element of $X$. By
construction of $S$, $x_{g_0}=0$, and so the $e$-coefficient of $(x-y)^*(x-y)$ is at least
\begin{equation}\label{eq4.18}
g_0^{-1}y_{g_0}^*y_{g_0}g_0=\alpha_{g_0}^{-1}(y_{g_0}^*y_{g_0}).
\end{equation}
Thus
\begin{equation}\label{eq4.19}
\phi(E((x-y)^*(x-y)))\geq \vp>0.
\end{equation}
Since $x\in X$ was arbitrary, this gives the contradiction that $y$ is not in the $B$-closure of $X$ and we conclude that the nonzero coefficients of $y$ can only occur for $g\in S$.

Now let $(f_\gamma)_{\gamma\in \Gamma}$ and $(T_\gamma)_{\gamma\in \Gamma}$ be the functions and operators of Lemma \ref{lem2.1}, let $z\in \cp$ be arbitrary, and let $x^\beta=\sum_{g\in
G}x_g^\beta g$ be a net of finitely supported elements converging to $z$ in the $w^*$-topology. Then, for each $g\in G$, 
$w^*-\underset{\beta}{\lim}\,  x_g^\beta =z_g$ since $x_g^\beta =E(x^\beta
g^{-1})$.
For each $\gamma\in \Gamma$,
\begin{equation}\label{eq4.20}
T_\gamma(x^\beta)={\textstyle\sum_{g\in G}}\, f_\gamma(g)x_g^\beta g
\end{equation}
from Lemma \ref{lem2.1} (vi), and since $T_\gamma(x^\beta)\to T_\gamma(z)$ in the $w^*$-topology, we also obtain that
\begin{equation}\label{eq4.21}
T_\gamma(z)={\textstyle\sum_{g\in G}}\, f_\gamma(g)z_gg,\ \ \ \ z\in \cp.
\end{equation}
Applying this to $y$ which is supported on $S$, we see that each $T_\gamma(y)$ is finitely supported on $S$ and thus lies in $X$. Since $T_\gamma(y) \to y$ in the $w^*$-topology, we
conclude that $y\in X$, completing the proof.
\end{proof}
 As an immediate consequence we have the following.
\begin{cor}\label{cor4.5}
Let $G$ be a discrete group acting by outer automorphisms $\alpha_g$, $g\in G$, on a factor $M$. The $B$-closed subalgebras of $\cp$ that contain $M$ are in bijective correspondence with
the unital semigroups $S\subseteq G$ by the map
\begin{equation}\label{eq4.22}
S\mapsto \ovl{{\mathrm{span}}}^B\sett{xg:x\in M,\ g\in G}.
\end{equation}
If, in addition, $G$ has the $AP$, then the sets of $B$-closed and $w^*$-closed subalgebras that contain $M$ coincide.
\end{cor}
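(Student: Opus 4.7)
The plan is to deduce Corollary \ref{cor4.5} from Theorem \ref{thm4.4} by identifying exactly which subsets $S \subseteq G$ produce subalgebras under the bijection $S \mapsto X_S$. Since every subalgebra $N$ with $M \subseteq N \subseteq \cp$ is in particular an $M$-bimodule, Theorem \ref{thm4.4}(i) already gives $N = X_S$ for a unique $S \subseteq G$; the task is to show that the constraint ``$X_S$ is multiplicatively closed'' translates exactly to ``$S$ is a unital semigroup.''

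The forward direction is the interesting one: assuming $S$ is a unital semigroup, show $X_S$ is a subalgebra. I would isolate two $B$-continuity facts. First, for fixed $h \in G$, right multiplication $x \mapsto xh$ is $B$-continuous, since
\[
\phi(E((x-y)^*h^{-1}h(x-y))) \;=\; (\phi \circ \alpha_{h^{-1}})\bigl(E((x-y)^*(x-y))\bigr),
\]
and $\phi \circ \alpha_{h^{-1}}$ is again a normal state on $M$. Second, for any fixed $u \in \cp$, left multiplication $x \mapsto ux$ is $B$-continuous, because $(x-y)^*u^*u(x-y) \leq \|u\|^2 (x-y)^*(x-y)$ as operators and $E$ is positive. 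Combining these: for $w \in X_S$ and $h \in S$, approximate $w$ in $B$ by finite sums $\sum x_g g$ with $g \in S$; right-multiplying by $h$ gives $\sum x_g (gh)$ whose support lies in $Sh \subseteq S$ by the semigroup property, so $wh \in X_S$ since $X_S$ is $B$-closed. Then for $u, v \in X_S$, approximate $v$ in $B$ by finite sums $v^{(n)} = \sum_{h \in F_n} v_h^{(n)} h$ with $F_n \subseteq S$; each $u v_h^{(n)}$ lies in $X_S$ by the $M$-bimodule property, each term $(u v_h^{(n)}) h$ lies in $X_S$ by the previous step, and the left $B$-continuity of $u \cdot$ forces $uv = B$-$\lim u v^{(n)} \in X_S$.

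The converse is quick: if $X_S$ is a subalgebra containing $M$, then $I \in X_S$ has Fourier series supported on $\{e\}$, so $e \in S$ (by the definition of $S$ in the proof of Theorem \ref{thm4.4}); for $g, h \in S$, Theorem \ref{thm4.3} gives $g, h \in X_S$, hence $gh \in X_S$, and since the Fourier series of $gh$ has its unique nonzero coefficient at $gh$, we obtain $gh \in S$. This establishes part (i). For part (ii), Theorem \ref{thm4.4}(ii) identifies $B$-closed and $w^*$-closed $M$-bimodules under the $AP$ hypothesis, and since every subalgebra containing $M$ is automatically an $M$-bimodule, the two classes of subalgebras coincide and inherit the same bijective correspondence with unital semigroups.

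I expect the one delicate point to be the $B$-continuity of multiplication in the forward direction, since multiplication in $\cp$ is not jointly $B$-continuous in general. Separating the argument into independent left and right $B$-continuity statements, each exploiting boundedness of the fixed factor, is the key device that makes the semigroup hypothesis do its work.
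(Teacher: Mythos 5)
Your proof is correct and takes essentially the same route as the paper, which states this corollary as an immediate consequence of Theorem \ref{thm4.4} without supplying details; your verification that multiplicative closure of $X_S$ corresponds exactly to $S$ being a unital semigroup, via the separate left and right $B$-continuity of multiplication by a fixed element, is precisely what is left implicit there. (One typographical slip: the quantity controlling the seminorm of $xh-yh$ is $\phi(E(h^{-1}(x-y)^*(x-y)h))$ rather than $\phi(E((x-y)^*h^{-1}h(x-y)))$ as displayed, but the identity $E(h^{-1}zh)=\alpha_{h^{-1}}(E(z))$ that you then invoke is the correct one and the argument stands.)
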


If $H\subseteq G$ is a subgroup, then the C$^*$-subalgebra of $\cp$ generated by $M$ and $H$ is denoted by $M\rtimes_{\alpha,r}H$. This is the reduced C$^*$-algebra crossed product of $M$ by $H$.  
\begin{rem}\label{rem4.6}
If $N$ is a von Neumann algebra satisfying $M\subseteq N\subseteq \cp$ then, by Corollary \ref{cor4.5}, there is a subgroup $H$ of $G$ so that
\begin{equation}
M\rtimes_{\alpha,r}H\subseteq N\subseteq \ovl{N}^B=M\rtimes_{\alpha}H.
\end{equation}
Taking $w^*$-closures gives $N=M\rtimes_{\alpha}H$. This was first established in \cite{Cho} under the hypothesis that there is a normal conditional expectation of $\cp$ onto $N$, and
this assumption was removed in \cite{ILP}, although $M$ was required to have a separable predual in order to make use of special hyperfinite subfactors which may not exist in the general
case.$\hfill\square$
\end{rem}

\section{C$^*$-envelopes}\label{sec5}
In this section, $G$ is a discrete group acting by outer automorphisms $\alpha_g$, $g\in G$, on a factor $M$. Note that $M$ is not type I since such factors have no outer automorphisms.
We do not wish to place unnecessary restrictions on $M$, but some of the facts that we will require are only valid for $\sigma$-finite factors so we begin with a brief discussion of such
algebras.

Recall that a factor $M$ is $\sigma$-{\emph{finite}} if every set of orthogonal projections in $M$ is at most countable. This is equivalent to the existence of a faithful normal state $\phi$ on $M$,
\cite[Prop. II.3.19]{Tak1}. More generally, a projection $p\in M$ is $\sigma$-finite if $pMp$ is $\sigma$-finite. Using the characterization by faithful normal states, it is clear that any projection
dominated by a $\sigma$-finite projection is itself $\sigma$-finite. The same characterization also makes it obvious that countable orthogonal sums of $\sigma$-finite projections are
again $\sigma$-finite. If $p$ and $q$ are $\sigma$-finite projections, then $\sigma$-finiteness of $p\vee q$ follows from the general equivalence (see \cite[Prop. V.1.6]{Tak1})
\begin{equation}\label{eq5.1a}
(p\vee q)-q\sim p-(p\wedge q)
\end{equation}
showing that $p\vee q=(p\vee q-q)+q$ is an orthogonal sum of two $\sigma$-finite projections. More generally, if $(p_n)_{n\geq 1}$ is a countable set of $\sigma$-finite projections with
lattice supremum $p$, then $\sigma$-finiteness of $p$ follows by writing this projection as an orthogonal sum of $\sigma$-finite projections $(q_n)_{n\geq 1}$ where $q_1=p_1$ and, for
$n\geq 2$,
\begin{equation}\label{eq5.1b}
q_n=\vee\sett{p_i:1\leq i\leq n}-\vee\sett{p_i:1\leq i\leq n-1}.
\end{equation}
If $p$ is a $\sigma$-finite projection and $G$ is countable, then it is dominated by a $G$-invariant $\sigma$-finite projection, namely $\vee\sett{\alpha_g(p):g\in G}$. Our final
observation is that any nonzero projection $q$ dominates a nonzero $\sigma$-finite projection $p$: choose a normal state $\phi$ with $\phi(q)>0$, choose a maximal set of orthogonal
subprojections $(q_\lambda)_{\lambda\in\Lambda}$ with $\phi(q_\lambda)=0$, and set $p=q-\sum_{\lambda\in\Lambda}q_\lambda$.
We will be interested in the structure of norm closed ideals in factors and this can vary with cardinality. For example, the norm closed span of the $\sigma$-finite projections in a type
III factor $M$ is a norm closed ideal which will be proper if the cardinality is sufficiently large, in contrast to the simplicity of such factors on separable Hilbert spaces.

Our first objective is  Lemma \ref{lem5.1}, a technical lemma that constitutes part of the proof of the subsequent theorem and which will be used again in Section \ref{sec7}.

\begin{lem}\label{lem5.0}
Let $\alpha$ be a $\ast$-automorphism of a von Neumann algebra $Q$ and let $a,b,c,d\in Q$ satisfy
\begin{equation}\label{eq5.1}
axb=c\alpha(x)d,\ \ \ \ x\in Q.
\end{equation}
If there exists $x_0\in Q$ such that $ax_0b\ne 0$, then $\alpha$ is not properly outer.
\end{lem}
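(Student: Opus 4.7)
The plan is to produce a nonzero element $t\in Q$ satisfying the intertwining relation $tx=\alpha(x)t$ for all $x\in Q$. By the freeness characterization of proper outerness recalled in Section \ref{sec2}, the existence of such a $t$ is exactly the negation of proper outerness, so this would complete the proof. The strategy mirrors that of Lemma \ref{lem4.2} and relies on the Christensen--Sinclair projection $\rho$ of Theorem \ref{thm4.1}.

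First I consider the completely bounded map $\phi:Q\to Q$ defined by $\phi(y)=\alpha(y)d$. Exactly as in the proof of Lemma \ref{lem4.2}, $\phi$ satisfies the intertwining identity $\phi(yx)=\alpha(y)\phi(x)$ for every $x,y\in Q$. Setting $t:=\rho\phi(1)$, so that $\rho\phi(y)=ty$, an application of Theorem \ref{thm4.1}(iii) (as in Lemma \ref{lem4.2}) yields $ty=\alpha(y)t$ for every $y\in Q$; hence $t$ is automatically an intertwiner for $\alpha$, and the remaining task is to verify that $t\neq 0$.

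To extract $t\neq 0$ from the hypothesis $ax_0b\neq 0$, the key identity is
\[
c\alpha(x)d=\alpha\bigl(\alpha^{-1}(c)\bigr)\alpha(x)\,d=\alpha\bigl(\alpha^{-1}(c)\,x\bigr)\,d=\phi\bigl(\alpha^{-1}(c)\,x\bigr),
\]
which rewrites the hypothesis as $axb=\phi\bigl(\alpha^{-1}(c)\,x\bigr)$ for every $x\in Q$. Applying Theorem \ref{thm4.1}(ii) to the completely bounded map $\psi(x):=axb$ and substituting this identity into each approximating sum $\psi^\beta(x)=\sum_j m_j^*\psi(xm_j^*)m_j$, one finds $\psi^\beta(x)=\phi^\beta(\alpha^{-1}(c)\,x)$ for every $\beta$, and hence
\[
\rho\psi(x)=w^*\text{-}\lim_\beta\sum_j m_j^*\,\phi\bigl(\alpha^{-1}(c)\,xm_j^*\bigr)\,m_j=\rho\phi\bigl(\alpha^{-1}(c)\,x\bigr)=t\,\alpha^{-1}(c)\,x.
\]
Consequently $\rho\psi\neq 0$ forces $t\alpha^{-1}(c)\neq 0$, hence $t\neq 0$, at which point the intertwining relation $ty=\alpha(y)t$ finishes the proof.

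The main obstacle is therefore to show $\rho\psi\neq 0$ from the assumption that $\psi(x_0)=ax_0b\neq 0$. I anticipate closing this step by a symmetric application of the above argument on the left: using the companion completely bounded map $\phi'(y)=c\alpha(y)$, which satisfies the dual intertwining $\phi'(xy)=\phi'(x)\alpha(y)$, together with the dual identity $c\alpha(x)d=\phi'\bigl(x\alpha^{-1}(d)\bigr)$ and the left-module analogue of Theorem \ref{thm4.1}, to produce a second intertwiner candidate from the left-module Christensen--Sinclair projection $\rho^L\psi$. If both $\rho\psi$ and $\rho^L\psi$ were zero, a structural argument about the elementary operator $\psi(x)=axb$ should force $\psi\equiv 0$ on $Q$, contradicting $\psi(x_0)\neq 0$. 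Hence at least one of the two candidate intertwiners is nonzero, and $\alpha$ is not properly outer.
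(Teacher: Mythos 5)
Your construction of the intertwiner is fine as far as it goes: the map $\phi(y)=\alpha(y)d$ does satisfy $\phi(yx)=\alpha(y)\phi(x)$, so $t=\rho\phi(1)$ satisfies $ty=\alpha(y)t$ exactly as in Lemma \ref{lem4.2}, and (modulo a stray $m_j^*$ on the left of your formula for $\psi^\beta$ --- equation \eqref{eq4.1} has no such factor) the identity $\rho\psi(x)=t\,\alpha^{-1}(c)\,x$ for $\psi(x)=axb$ is correct. The problem is that the entire proof now rests on showing $\rho\psi\neq 0$, and this is precisely the step you have not supplied. Computing directly, $\rho\psi(x)=ax\,E(b)$ where $E(b)=w^*\text{-}\lim_\beta\sum_j m_j^*bm_j$, and this averaged element can perfectly well vanish for nonzero $b$; likewise for the left-handed version. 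Your proposed fallback --- that $\rho\psi=\rho^L\psi=0$ should force the elementary operator $\psi(x)=axb$ to vanish identically --- is false as a general principle: in $M_2(\mathbb{C})$ with $a=b=e_{12}$ both averages are $0$ (each equals $\tau(e_{12})I=0$ over the unitary average), yet $\psi(e_{21})=e_{12}\neq 0$. So the argument as outlined does not close, and any repair would have to exploit the intertwining hypothesis \eqref{eq5.1} in an essential way at this stage, which you have not done.

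For comparison, the paper's proof is entirely elementary and makes no use of the Christensen--Sinclair projection. It performs a sequence of reductions on \eqref{eq5.1}: left-multiply by $a^*$ to make $a\geq 0$, cut by a spectral projection to replace $a$ by a projection $p$ with $px_0b\neq 0$, then use partial isometries $v_i$ with $v_i^*v_i\leq p$ summing to the central support $c(p)$ to replace $p$ by $1$ (the automorphism $\alpha$ gets conjugated into the right-hand side harmlessly); the same procedure applied on the right eliminates $d$. One is left with $xb_3=c_3\alpha(x)$ for all $x$ with $b_3\neq 0$, and setting $x=1$ gives $c_3=b_3$, exhibiting a nonzero intertwiner directly. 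The crucial difference is that these reductions preserve the nonvanishing witnessed by $x_0$ at every step, which is exactly the control your averaging approach loses.
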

\begin{proof}
Since $(ax_0b)^*(ax_0b)=b^*x_0^*(a^*ax_0b)$, we see that $a^*ax_0b\ne 0$. This allows us to multiply on the left in \eqref{eq5.1} by $a^*$ so that we may assume that $a\geq
0$. Let $p_\vp$ be the spectral projection of $a$ for the interval $[\vp, \infty)$. For a sufficiently small choice of $\vp$, $p_\vp x_0b\ne0$ and so we may multiply on
the left in \eqref{eq5.1} by a suitable element which allows us to replace $a$ by a projection $p\in Q$. Thus we work with an equation of the form
\begin{equation}\label{eq5.2}
pxb_1=c_1\alpha(x)d_1,\ \ \ \ x\in Q,
\end{equation}
where $px_0b_1\ne 0$. As in \cite[p.61]{SS2}, we may find a family of partial isometries $v_i\in Q$ so that $v_i^*v_i\leq p$ and the projections $v_ipv_i^*$ are pairwise
orthogonal and sum strongly to the central support $c(p)$ of $p$. Then $p\leq c(p)$ so $c(p)x_0b_1\ne 0$ and
\begin{equation}\label{eq5.3}
xc(p)b_1=c(p)xb_1=\sum_iv_ipv_i^*xb_1
=\sum_i v_ic_1\alpha(v_i^*)\alpha(x)d_1,\ \ \ \ x\in Q,
\end{equation}
the sums converging strongly due to the complete boundedness of $\alpha$. This allows us to replace $p$ by 1 in \eqref{eq5.2} and to assume that we have an equation of
the form
\begin{equation}\label{eq5.4}
xb_2=c_2\alpha(x)d_2,\ \ \ \ x\in Q,
\end{equation}
where $x_0b_2\ne 0$. Then $c_2\alpha(x_0)d_2\ne 0$.

Repeat this argument on the right on both sides of \eqref{eq5.4} to replace $d_2$ by 1. The element $b_2$ will change, but we will have reduced to an equation of the
form
\begin{equation}\label{eq5.5}
xb_3=c_3\alpha(x),\ \ \ \ x\in M,
\end{equation}
where $x_0b_3\ne 0$. Setting $x=1$ gives $c_3=b_3\ne 0$, and we have shown that $\alpha$ is not free and hence not properly outer.
\end{proof}

\begin{lem}\label{lem5.1}
Let $Q$ be a von Neumann algebra  with a faithful normal semifinite trace $Tr$ and let $\alpha$ be a properly outer $\ast$-automorphism of $Q$. Let $p\in Q$ be a  projection,
let $N=\sett{p}'\cap Q$, and denote the unitary group of $N$ by $\mathcal U$. If $y\in Q\cap L^2(Q,Tr)$ and
$$K:={\mathrm{conv}}\,\sett{uy\alpha(u^*):u\in \mathcal U},$$
then $0\in \overline{K}^{\|\cdot\|_2}$.
\end{lem}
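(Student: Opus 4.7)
The plan is a twisted Dixmier-style averaging argument carried out in the Hilbert space $L^2(Q,Tr)$. First I would observe that for each $u\in\mathcal U$ the map $z\mapsto uz\alpha(u^*)$ is a $\|\cdot\|_2$-isometry on $Q\cap L^2(Q,Tr)$: using cyclicity of $Tr$ together with $\alpha(u^*u)=1$,
\[
\|uz\alpha(u^*)\|_2^2=Tr(\alpha(u)z^*z\alpha(u^*))=Tr(z^*z)=\|z\|_2^2.
\]
In particular $K\subseteq L^2(Q,Tr)$ is bounded, so its $\|\cdot\|_2$-closure $\overline{K}^{\|\cdot\|_2}$ is a nonempty closed convex subset of a Hilbert space and contains a unique element $y_0$ of minimal $\|\cdot\|_2$-norm. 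Each of the isometries above preserves $K$ and hence $\overline{K}^{\|\cdot\|_2}$, so the uniqueness of the minimizer forces $uy_0\alpha(u^*)=y_0$, equivalently $uy_0=y_0\alpha(u)$, for every $u\in\mathcal U$. Since $\mathcal U$ spans $N$ in norm and both sides depend $L^2$-continuously on $x$, this extends to $xy_0=y_0\alpha(x)$ for every $x\in N$.

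It therefore suffices to prove $y_0=0$. Specializing $x=p$ gives $py_0=y_0\alpha(p)$, which forces the decomposition $y_0=w_1+w_2$ with $w_1:=py_0\alpha(p)$ and $w_2:=(1-p)y_0\alpha(1-p)$. For $x\in pQp\subseteq N$ the identity $xy_0=y_0\alpha(x)$ collapses to $xw_1=w_1\alpha(x)$, and symmetrically on the $(1-p)$ corner. Assume for contradiction that $y_0\ne 0$; without loss of generality $w_1\ne 0$ (the case $w_2\ne 0$ is handled identically after swapping $p$ with $1-p$).

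The main obstacle is that $w_1$ lies only in $L^2$, while Lemma \ref{lem5.0} demands bounded data. I would resolve this by spectral truncation of $w_1 w_1^*\in L^1(Q,Tr)_+$. Taking adjoints in $xw_1=w_1\alpha(x)$ gives $w_1^*x=\alpha(x)w_1^*$ for $x\in pQp$, and combining these shows that $w_1w_1^*$ commutes with $pQp$. Choosing a nonzero spectral projection $e$ of $w_1w_1^*$ for a bounded interval $[a,M]$ with $a>0$ yields $e\in (pQp)'\cap Q$ (so $e$ commutes with $p$), and then $ew_1$ is bounded, nonzero, lies in $pQ\alpha(p)$, and still satisfies $x(ew_1)=(ew_1)\alpha(x)$ for $x\in pQp$. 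Replacing $x$ by $pxp$ for arbitrary $x\in Q$ and using $(ew_1)\alpha(p)=ew_1$ produces the bounded-coefficient equation
\[
p\cdot x\cdot (ew_1)=(ew_1)\alpha(x)\alpha(p),\qquad x\in Q,
\]
which is of the form $axb=c\alpha(x)d$ with $ax_0 b=ew_1\ne 0$ at $x_0=1$. Lemma \ref{lem5.0} then contradicts proper outerness of $\alpha$, forcing $y_0=0$.
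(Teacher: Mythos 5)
Your proof is correct and follows the same route as the paper's: average over $\mathcal U$ via the $\|\cdot\|_2$-isometries $z\mapsto uz\alpha(u^*)$, take the unique element of minimal $\|\cdot\|_2$-norm in $\overline{K}^{\|\cdot\|_2}$, derive the intertwining relation $xt=t\alpha(x)$ for $x\in N$, restrict to the corner $pQp$ (or $p^\perp Qp^\perp$), and contradict proper outerness via Lemma \ref{lem5.0}. The single point of divergence is how boundedness of the minimizer is handled: the paper disposes of it at the outset by citing \cite[Lemma 9.2.1 (v)]{SS2}, which guarantees that the $\|\cdot\|_2$-closure of a set bounded in operator norm by $\|y\|$ stays inside $Q\cap L^2(Q,Tr)$, so the minimal element $t$ is already a bounded operator and Lemma \ref{lem5.0} applies directly with $a=p$, $b=pt$, $c=t\alpha(p)$, $d=\alpha(p)$, $x_0=1$. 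Your spectral truncation of $w_1w_1^*$ is a valid, self-contained substitute for that citation --- the commutation of $w_1w_1^*$ with $pQp$, the fact that $e\leq p$, and the boundedness and nonvanishing of $ew_1$ all check out --- but it is extra work that the cited lemma makes unnecessary.
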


\begin{proof}
Since the elements of $K$ are bounded in the operator norm by $\|y\|$ and in the $\|\cdot\|_2$-norm by $\|y\|_2$, \cite[Lemma 9.2.1 (v)]{SS2} shows that
$\overline{K}^{\|\cdot\|_2}\subseteq Q\cap L^2(Q,Tr)$. Let $t\in Q\cap L^2(Q,Tr)$ be the element of minimal $\|\cdot\|_2$-norm in $\overline{K}^{\|\cdot\|_2}$. Then
\begin{equation}\label{eq5.6}
ut\alpha(u^*)=t,\ \ \ \ u\in \mathcal U,
\end{equation}
so, by taking linear combinations, we see that
\begin{equation}\label{eq5.7}
xt=t\alpha(x),\ \ \ \ x\in N.
\end{equation}
To derive a contradiction, suppose that $t\ne 0$. Then either $pt\ne 0$ or $p^\perp t\ne 0$, and since the argument is the same in both cases we assume without loss of
generality that  the first of these possibilities holds. Now $pQp,\ p^\perp Qp^\perp \subseteq N$, so substitution of elements of the first of these subalgebras into
\eqref{eq5.7} gives
\begin{equation}\label{eq5.8}
pxpt=t\alpha(p)\alpha(x)\alpha(p),\ \ \ \ x\in Q.
\end{equation}
(We would use the second subalgebra if $p^\perp t$ were nonzero). Since $pt\ne 0$, the hypotheses of Lemma \ref{lem5.0} hold with $a=p$, $b=pt$, $c=t\alpha(p)$,
$d=\alpha(p)$, and $x_0=1$. Thus $\alpha$ is not properly outer, and this contradiction shows that $t=0$ as required.
\end{proof}

\begin{lem}\label{lem5.2a}
Let $Q\subseteq P$ be an inclusion of von Neumann algebras with a faithful normal contractive conditional expectation $E:P\to Q$, and let $Q$ have a faithful normal semifinite trace $Tr$. Let $\Lambda$ be a set with a distinguished element $\lambda_0$ and let $\sett{v_\lambda:\lambda\in \Lambda}\subseteq \mathcal{N}(Q\subseteq P\}$ be a collection of normalizing unitaries such that $v_{\lambda_0}=1$ and the $\ast$-automorphisms $\alpha_\lambda\in {\mathrm{Aut}}(Q)$ given by
\begin{equation}\label{eq2001}
\alpha_\lambda(x)=v_\lambda xv_\lambda^*,\ \ \ \ \lambda\in \Lambda,\ x\in Q,
\end{equation}
are properly outer for $\lambda\ne\lambda_0$. Let $A$ be a $C^*$-algebra with the following properties:
\begin{itemize}
\item[\rm (i)] $Q\subseteq A\subseteq P$.
\item[\rm (ii)] For each $\lambda\in \Lambda$, there is an algebraic ideal $I_\lambda\subseteq Q$ with $I_{\lambda_0}=Q$ such that $A$ is the norm closure of ${\mathrm{span}}\,\sett{I_\lambda v_\lambda:\lambda\in \Lambda}$.
\end{itemize}
Let $J$ be a nonzero norm closed ideal in $A$. Then $Q\cap J\ne \sett{0}$.
\end{lem}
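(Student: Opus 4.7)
The plan is to construct a nonzero element of $J\cap Q$ via an averaging-and-cut-down strategy: average a positive element of $J$ by unitaries of $Q\subseteq A$ so as to annihilate its off-diagonal ``Fourier'' coefficients in the $L^2$-sense, then cut down by a suitable projection in $Q$ to convert $L^2$-smallness into operator-norm proximity to an element of $Q$.

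First, take $0\ne a\in J$ and pass to $a^*a/\|a^*a\|$ so that $a\geq 0$ with $\|a\|=1$. Faithfulness of $E$ gives $h:=E(a)\in Q_+\setminus\{0\}$. I choose a nonzero spectral projection $p$ of $h$ corresponding to an interval $[\|h\|-\delta,\|h\|]$, dominated by a $Tr$-finite projection if needed so that $Tr(p)<\infty$; then $php\geq(\|h\|-\delta)p$. Replacing $a$ with $pap\in J$, I work henceforth with $a=pap$ and $E(a)=php$.

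For $\epsilon>0$, approximate $a$ within $\epsilon$ in norm by a finite sum $\tilde a=\sum_{\lambda\in F}y_\lambda v_\lambda$ from $\mathrm{span}\,\{I_\lambda v_\lambda\}$, with $F$ a finite subset of $\Lambda$ containing $\lambda_0$. By replacing $\tilde a$ with $p\tilde a p$ and then adjusting the $\lambda_0$-coefficient at the cost of a factor of $2$, I may assume $y_\lambda=py_\lambda\alpha_\lambda(p)\in I_\lambda\cap L^2(Q,Tr)$ for each $\lambda$ and $y_{\lambda_0}=php$. Now apply Lemma~\ref{lem5.1} iteratively with projection $p$, unitary group $\mathcal U(N)$ for $N=\{p\}'\cap Q$, and each properly outer automorphism $\alpha_\lambda$ ($\lambda\ne\lambda_0$): for any $\eta>0$ there is a single convex combination $\Phi=\sum c_i\,\mathrm{Ad}(u_i)$ with $u_i\in\mathcal U(N)\subseteq Q\subseteq A$ such that $\|\sum c_iu_iy_\lambda\alpha_\lambda(u_i^*)\|_2<\eta$ simultaneously for all $\lambda\in F\setminus\{\lambda_0\}$; this iteration is legitimate because $\mathrm{Ad}(u)$ and $\alpha_\lambda$ act as $\|\cdot\|_2$-isometries, so earlier smallness is preserved when one then averages further to shrink the next coefficient. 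Setting $b:=\Phi(a)\in J$ gives $\|b-(b_0+b_1)\|\leq\epsilon$, where $b_0:=\sum c_iu_i(php)u_i^*\in Q_+$ has $Tr(b_0)=Tr(php)>0$ independently of the averaging, and $b_1$ is the off-diagonal remainder whose Fourier coefficients have small $\|\cdot\|_2$-norm.

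The final step is to choose a projection $q\in Q$ so that $qb_0q\ne 0$ while $\|qb_1q\|$ is small in operator norm; then $qbq\in J$ is norm-close to $qb_0q\in Q$, and norm-closure of $J$ places $qb_0q\in J\cap Q$, nonzero by construction. The main obstacle, which I expect to be the delicate part of the argument, is precisely this last passage from $L^2$-smallness of $b_1$ to operator-norm smallness of $qb_1q$ with $q$ inside $Q$ rather than $P$ (where a spectral-projection argument would be immediate). I plan to handle it by taking $q$ as a spectral projection, for a small interval about $0$, of a positive element of $Q$ built from the Fourier coefficients $z_\lambda$ of $b_1$ (for example, one dominating $\sum_\lambda(z_\lambda^*z_\lambda+z_\lambda z_\lambda^*)$), combined with Chebyshev's inequality to control $Tr(q^\perp)$ by $\|b_1\|_2^2/\delta$. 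Taking $\eta$ sufficiently small relative to $\delta$ and $Tr(php)$ then yields both $\|qb_1q\|<\epsilon'$ and the estimate $Tr(qb_0q)\geq Tr(b_0)-\|b_0\|\,Tr(q^\perp)>0$, so that $qb_0q\ne 0$ and the proof is complete.
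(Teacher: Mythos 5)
Your overall strategy is, at its core, the same as the paper's: use Lemma \ref{lem5.1} to average the off-diagonal Fourier coefficients of a finite approximant to something small in $\|\cdot\|_2$, cut down by a spectral projection to convert $\|\cdot\|_2$-smallness into operator-norm smallness, and conclude from the norm-proximity of an element of $J$ to a nonzero element of $Q$. Your one organizational difference --- killing all off-diagonal coefficients simultaneously by composing averagings, using that twisted conjugation by unitaries of $Q$ is a $\|\cdot\|_2$-isometry so that earlier smallness survives later averaging --- is correct and lets you avoid the paper's minimal-length induction, in which one coefficient is removed per cut-down and the contradiction comes from minimality. The preparatory normalizations (finite-trace $p$ with $php\geq(\|h\|-\delta)p$, coefficients in $pQ\alpha_\lambda(p)\cap L^2(Q,Tr)$, averaging unitaries taken from $\mathcal{U}(\sett{p}'\cap Q)$ so that the $\lambda_0$-coefficient keeps its trace and its lower bound) all check out, granting the easy fact $E(v_\lambda)=0$ for $\lambda\neq\lambda_0$ that you use implicitly.

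The genuine gap is the final inference: ``norm-closure of $J$ places $qb_0q\in J\cap Q$.'' This is a non sequitur. An element at small positive distance from a closed ideal need not belong to it, and here the distance $\epsilon+\|qb_1q\|$ is a fixed positive number; you cannot send it to $0$, since $q$ and $b_0$ themselves change with $\epsilon$. The correct conclusion, as in the paper, is that the quotient map $A\to A/J$ fails to be isometric on $Q$, because $\|qb_0q+J\|\leq\|qb_0q-qbq\|<\|qb_0q\|$, and since $Q/(Q\cap J)$ embeds isometrically in $A/J$ this forces $Q\cap J\neq\sett{0}$. But that argument needs an operator-norm lower bound on $qb_0q$ exceeding the approximation error, not merely $qb_0q\neq 0$, and your construction only delivers $Tr(qb_0q)>0$. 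The bound is in fact available from what you have set up: since the $u_i$ commute with $p$ and $php\geq(\|h\|-\delta)p$, one gets $b_0\geq(\|h\|-\delta)p$, hence $\|qb_0q\|\geq(\|h\|-\delta)\|qpq\|=(\|h\|-\delta)\|pqp\|$, and $\|pqp\|\geq Tr(pqp)/Tr(p)\geq 1-Tr(q^{\perp})/Tr(p)$ because $pqp$ is a positive contraction supported under the finite projection $p$. Choosing $\eta$ small relative to $Tr(p)$ and $\epsilon$ small relative to $\|h\|-\delta$ then closes the argument; as written, however, the last step is false and the estimate it conceals is missing.
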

\begin{proof}
An application of $E$ to the equation
\begin{equation}\label{eq2002}
\alpha_\lambda(x)v_\lambda=v_\lambda x,\ \ \ \ x\in Q,
\end{equation}
shows that $E(v_\lambda)=0$ for $\lambda\ne \lambda_0$, otherwise one of these automorphisms would fail to be properly outer.

Let $j\in J^+$ be a nonzero positive element. By faithfulness, $E(j)\ne 0$, so multiplying by a suitable element of $Q$ allows us to assume that $E(j)$ is a nonzero finite projection $p\in Q$. Since finite sums of terms from $I_\lambda v_\lambda$ are norm dense in $A$, we may choose a finite sum ${\sum_{\lambda\in \Lambda}'}\,x_\lambda v_\lambda$ with $x_\lambda\in I_\lambda$ (where $\sum'$ denotes a finite sum) so that
\begin{equation}\label{eq5.9}
\left\|j-{\textstyle{\sum_{\lambda\in \Lambda}'}\,x_\lambda v_\lambda}\right\|<1/4.
\end{equation}
Applying the expectation $E$, we obtain $\|p-x_{\lambda_0}\|<1/4$, so if we add $(p-x_{\lambda_0}) $ to the finite sum then we have a new approximation 
\begin{equation}\label{eq5.10}
\left\|j-{	\textstyle{\sum_{\lambda\in \Lambda}'}\,x_\lambda v_\lambda}\right\|<1/2
\end{equation}
with the the additional property that $x_{\lambda_0}$ is now $p$. Multiplying on the left by $p$, we may further assume that each $x_\lambda$ lies in $L^2(M,Tr)$. 

Consider the set of all pairs $(j,\sum_{\lambda\in \Lambda}'\,x_\lambda v_\lambda)$, where $j\in J$, $E(j)$ is a finite nonzero projection $p$, $x_{\lambda_0}=p$, $x_\lambda\in I_\lambda\cap L^2(M,Tr)$ for $\lambda\in\Lambda$ and the inequality \eqref{eq5.10} is satisfied. We have just seen that there is at least one such choice, so from all the possibilities, select one for which the sum has minimal length, and call it $(j,\sum_{\lambda\in \Lambda}'\,x_\lambda v_\lambda)$. Our objective is to show that the length is 1, so to obtain a contradiction suppose that there is a $\lambda_1\ne \lambda_0$ with $x_{\lambda_1}\ne 0$.

Fix $\vp>0$ so that $\vp < Tr(p)$ and
\begin{equation}\label{eq5.11}
\left\|j-{\textstyle\sum'}_{\lambda\in \Lambda}\,x_\lambda v_\lambda\right\|+\sqrt{\vp}< 1/2.
\end{equation}
Since $\alpha_{\lambda_1}$ is an outer automorphism, we may apply Lemma \ref{lem5.1} to find a finite set of unitaries
$\sett{u_i\in N:=\sett{p}'\cap Q:1\leq i\leq k}$ and positive constants $\mu_i$ summing to 1 so that
\begin{equation}\label{eq5.12}
\left\|{\textstyle\sum_{i=1}^k} \,\mu_iu_ix_{\lambda_1}\alpha_{\lambda_1}(u_i^*)\right\|_2<\vp.
\end{equation}
Define a complete contraction $T:A \to A$ by
\begin{equation}\label{eq5.13}
T(y)={\textstyle\sum_{i=1}^k}\,\mu_ipu_iyu_i^*,\ \ \ \ y\in A.
\end{equation}
Then $T(j)\in J$, $E(T(j))$ is still $p$, and applying $T$ to \eqref{eq5.11} gives a new approximation
\begin{equation}\label{eq5.14}
\left\|T(j)-{\textstyle\sum_{\lambda\in \Lambda}'}\,z_\lambda v_\lambda\right\|+\sqrt{\vp}<1/2.
\end{equation}
The sum in \eqref{eq5.14} is still of minimal length, $z_{\lambda_0}=p$ since the $u_i$'s commute with $p$ and we have the additional features that $\|z_{\lambda_1}\|_2<\vp$ and $pz_{\lambda_1}=z_{\lambda_1}$. Let $q$ be the spectral projection
of $z_{\lambda_1}z_{\lambda_1}^*$ for the interval $[\vp,\infty)$. Then $\vp q\leq z_{\lambda_1}z_{\lambda_1}^*$ so
\begin{equation}\label{eq5.15}
\vp Tr(q)\leq Tr(z_{\lambda_1}z_{\lambda_1}^*)=\|z_{\lambda_1}\|_2^2<\vp^2,
\end{equation}
and thus
\begin{equation}\label{eq5.16}
Tr(q)<\vp< Tr(p).
\end{equation}
Note also that $q\leq p$, and that the projection $q_1=p-q$ is nonzero since $Tr(q_1)>0$ from \eqref{eq5.16}.
Since $q_1$ is orthogonal to $q$, we have   $\|q_1z_{\lambda_1}z_{\lambda_1}^*q_1\|\leq \vp$, so
$\|q_1z_{\lambda_1}\|\leq\sqrt{\vp}$. Returning to \eqref{eq5.14} and multiplying on the left by $q_1$, we obtain
\begin{equation}\label{eq5.17}
\left\|q_1T(j)-{\textstyle\sum_{\lambda\in \Lambda}'}\,q_1z_\lambda v_\lambda\right\|<1/2-\sqrt{\vp},
\end{equation}
and it follows that
\begin{equation}\label{eq5.18}
\left\|q_1T(j)-{\textstyle\sum_{\lambda\ne \lambda_1}'}\,q_1z_\lambda v_\lambda\right\|\leq \left\|q_1T(j)-{\textstyle\sum_{\lambda\in \Lambda}'}\,q_1z_\lambda v_\lambda\right\|+\left\|q_1z_{\lambda_1}\right\|< 1/2.
\end{equation}
Note that $q_1T(j)\in J$ has $E(q_1T(j))=q_1p=q_1$ and $q_1z_{\lambda_0}=q_1$. Consequently \eqref{eq5.18} gives a strictly shorter sum with the same properties, a contradiction. Thus the shortest such sum
has length 1, so there exists $j\in J$ and a nonzero projection $p\in Q$ such that $\|j-p\|< 1/2$. It follows that the restriction to $Q$ of the quotient map onto $A/J$ is not isometric, and so $J\cap Q\ne
\sett{0}$ as desired.
\end{proof}

Recall that $\rcp$ denotes the reduced C$^*$-algebra crossed product, a subalgebra of $\cp$ which is the norm closed span of the elements $mg$ for $m\in M$ and $g\in G$.
\begin{thm}\label{thm5.2}
Let $G$ be a discrete group acting on a factor $M$ by outer automorphisms $\alpha_g$, $g\in G$. Let $J$ be a norm closed nonzero ideal
 in $\rcp$. Then $J\cap M\ne \sett{0}$.
\end{thm}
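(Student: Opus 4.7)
The plan is to obtain Theorem~\ref{thm5.2} as a direct application of Lemma~\ref{lem5.2a}. Take $Q = M$, $P = \rcp$, $A = \rcp$, $\Lambda = G$, $\lambda_0 = e$, the normalizing unitary $v_g = g$ in $\rcp$ for each $g \in G$, $I_g = M$ for every $g$, and let $E: \rcp \to M$ be the canonical faithful normal conditional expectation. With these choices, $M \subseteq \rcp$; by construction, $\rcp$ is the norm closure of $\mathrm{span}\,\sett{mg : m \in M,\, g \in G}$, which coincides with the norm closure of $\mathrm{span}\,\sett{I_g v_g : g \in \Lambda}$; each $g$ normalizes $M$, implementing $\alpha_g$ by conjugation; and since $G$ acts on the factor $M$ by outer automorphisms, every $\alpha_g$ with $g \neq e$ is outer and hence properly outer. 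So conditions (i) and (ii) of Lemma~\ref{lem5.2a} hold, and its conclusion immediately yields $J \cap M \neq \sett{0}$.

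The one subtlety is the hypothesis in Lemma~\ref{lem5.2a} that $Q = M$ carry a faithful normal semifinite trace; this is the main obstacle of the plan. For type II factors the hypothesis is automatic, and type I is excluded here since type I factors admit no outer automorphisms, but for type III factors no such trace exists. To handle that case one would invoke the $\sigma$-finiteness machinery developed at the start of Section~\ref{sec5}: starting from a nonzero positive $j \in J$, the faithfulness of $E$ together with a spectral-calculus cutdown inside $M$ produces a nonzero $j' \in J$ whose expectation $E(j')$ is a projection $p$ in $M$, and passing to a nonzero $\sigma$-finite subprojection $p_0 \leq p$ reduces the problem to a $\sigma$-finite corner.

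In this $\sigma$-finite setting a faithful normal state $\phi$ takes the place of the trace in Lemma~\ref{lem5.1}, with the modular automorphism group of $\phi$ compensating for the failure of tracial symmetry of the associated $\|\cdot\|_2$-norm under the two-sided action of unitaries from $\sett{p_0}'\cap M$. With this substitution in place, the length-reduction argument in the proof of Lemma~\ref{lem5.2a}, namely the iterative removal of off-diagonal terms via spectral truncation of $z_{\lambda_1}$ using a cutoff projection $q$ of small size, carries through essentially verbatim, producing a nonzero element of $J \cap M$ and completing the proof.
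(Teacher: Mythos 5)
Your application of Lemma \ref{lem5.2a} in the semifinite case is essentially the paper's argument: for a $\sigma$-finite type II$_\infty$ factor the paper takes exactly $Q=M$, $\Lambda=G$, $\lambda_0=e$, $v_\lambda=g$, $I_\lambda=M$, $A=\rcp$ (with one small correction to your setup: the lemma requires $P$ to be a \emph{von Neumann} algebra with a faithful normal expectation onto $Q$, so you must take $P=\cp$, not $P=\rcp$). You also correctly identify the type III case as the real obstacle. But your proposed resolution of it is a genuine gap, not a proof. The minimal-$\|\cdot\|_2$-norm averaging in Lemma \ref{lem5.1} depends on the map $x\mapsto uxv$ being a $\|\cdot\|_2$-isometry, which is exactly the trace property; for the GNS norm of a non-tracial faithful normal state, right multiplication by a unitary is not isometric, so the convex set $K$ is not carried into itself by the relevant isometries and the fixed-point argument collapses. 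The same problem infects the length-reduction step of Lemma \ref{lem5.2a}, where the estimate $\vp\, Tr(q)\leq Tr(z_{\lambda_1}z_{\lambda_1}^*)=\|z_{\lambda_1}\|_2^2$ and the identity $E(q_1 T(j))=q_1 p$ both use traciality. Saying that the modular automorphism group ``compensates'' is not an argument; making it one would require substantial new work (and passing to a $\sigma$-finite corner does not help, since a corner of a type III factor is still type III and carries no trace).

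The paper avoids all of this by an entirely different and much shorter route in the type III case: after reducing to countable $G$ and $\sigma$-finite $M$ (a reduction you partially sketch but which also needs a $G$-invariant $\sigma$-finite projection, hence countability of $G$), it observes that a $\sigma$-finite type III factor is \emph{simple} (any nonzero projection in a norm closed ideal is equivalent to $I$), and then invokes Kishimoto's theorem \cite{Ki} that an outer action on a simple C$^*$-algebra yields a simple reduced crossed product; the same applies to type II$_1$ factors. Thus $J=\rcp\supseteq M$ and there is nothing to prove in those cases, with Lemma \ref{lem5.2a} reserved solely for the II$_\infty$ situation. You should replace your modular-theory sketch with this simplicity argument, or else supply a genuinely worked-out non-tracial version of Lemmas \ref{lem5.1} and \ref{lem5.2a}.
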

\begin{proof}
We will prove this for countable groups and $\sigma$-finite factors and deduce the general case from this. We begin by handling the reduction, which we accomplish in two stages. Suppose that the result is true for countable groups and general factors.

Let $J$ be a nonzero norm closed ideal in $\rcp$. We may write $G$ as the union of an increasing net $(G_\lambda)_{\lambda\in \Lambda}$
of countable groups, and $\rcp$ is then the norm closure of $\cup_{\lambda \in \Lambda}\rcpl$. Since $J\ne \sett{0}$, it must have nontrivial intersection with $\rcpl$ for $\lambda$ sufficiently large and so $J\cap M\ne \sett{0}$ since each $G_\lambda$ is countable.

Now suppose that the result is true for countable groups and $\sigma$-finite factors, and consider a countable group $G$, a general factor $M$, and a nonzero norm closed ideal $J$ in $\rcp$. Let $j\in J$ be a nonzero element and write $j=\sum_{g\in G}x_gg$ for the Fourier series in $\cp$. At least one coefficient $x_{g_0}$ is nonzero, so multiplying by $x_{g_0}^*$ on the left and by $g_0^{-1}$ on the right allows us to assume that $x_e\geq 0$ and is nonzero. Multiplication on the left by a suitably chosen element of $M$ allows us to assume that $x_e$ is a nonzero projection $q$. Choose a nonzero $\sigma$-finite subprojection $p$ and a $G$-invariant $\sigma$-finite projection $p_1$ that dominates $p$. Then $pjp_1$ has Fourier series $\sum_{g\in G}px_gp_1g$ and is nonzero since the $e$-coefficient is $p$. Moreover, all coefficients lie in the $\sigma$-finite factor $p_1Mp_1$. A simple approximation argument gives $(\rcp)\cap (\cpcut)=\rcpcut$, and so $J\cap M\ne \sett{0}$ since we are assuming this for $\sigma$-finite factors. This completes the reduction and so it suffices to prove the result when $G$ is countable and $M$ is $\sigma$-finite and we henceforth assume that we are in this case, although this is only necessary for the type III situation.

If $M$ is type III and $K\subseteq M$ is a nonzero norm closed ideal, then spectral theory gives us a nonzero projection $p\in K$ and this is equivalent to $I-p$ since $M$ is $\sigma$-finite, \cite[Prop. V.1.39]{Tak1}. Then there is a partial isometry $v\in M$ so that $I-p=vpv^*$, and we obtain $I\in K$ and $K=M$. Thus $\sigma$-finite type III factors are simple, as are all type II$_1$ factors. In both cases $\rcp$ is simple, \cite{Ki}, and thus $J$ certainly contains $M$.

It remains to consider the case of a $\sigma$-finite type II$_\infty$ factor $M$ acted upon by a group $G$. Such a factor has the form $M_0\,\overline{\otimes}\,B(H)$ for a type II$_1$ factor $M_0$, and $H$ is separable otherwise $M$ would not be $\sigma$-finite. Thus $M$ has a faithful normal semifinite trace and Lemma \ref{lem5.2a} applies to the inclusion $M\subseteq \cp$. The result is then immediate from this lemma by taking $Q=M$, $P=\cp$, $\Lambda =G$, $\lambda_0=e$, $v_\lambda =g$, $I_\lambda =M$, and $A=\rcp$.\end{proof}

We now use these results to show that $\rcp$ is the C$^*$-envelope of certain operator subspaces of this C$^*$-algebra. Recall that a C$^*$-algebra $A$ is said to be the C$^*$-{\emph{envelope}}
of a unital operator space $X$ if there is a completely isometric unital embedding $\iota:X\to A$ so that $\iota(X)$ generates $A$, and if $B$ is another C$^*$-algebra with a completely
isometric unital embedding $\iota':X\to B$ whose range generates $B$, then there is a *-homomorphism $\pi:B\to A$ so that $\pi\circ\iota'=\iota$ (which entails surjectivity of $\pi$).
Every unital operator space has a unique C$^*$-envelope denoted C$^*_{env}(X)$, \cite{Ar1,Ar2,Ha}.
\begin{thm}\label{thm5.3}
Let $G$ be a discrete group acting on a factor $M$ by outer automorphisms $\alpha_g$, $g\in G$, let $S$ be a unital subset of $G$ which generates $G$, and let $X\subseteq \rcp$ be an
operator space that contains $\sett{mg:m\in M,\  g\in S}$. Then C$^*_{env}(X)=\rcp$.
\end{thm}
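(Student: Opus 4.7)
The plan is to use Hamana's characterization of the C$^*$-envelope: a C$^*$-algebra $A$ generated by a unital operator space $X$ is $C^*_{env}(X)$ precisely when the only closed two-sided ideal $J\subseteq A$ such that the quotient map $A\to A/J$ restricts to a complete isometry on $X$ (the so-called \emph{Shilov boundary ideal}) is $J=\{0\}$. So I would reduce the theorem to two verifications: first, that $X$ is a unital operator space that generates $\rcp$ as a C$^*$-algebra; second, that $\rcp$ has no nonzero boundary ideal relative to $X$.

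For the generation claim, since $S$ is unital we have $e\in S$ and hence $m=m\cdot e\in X$ for every $m\in M$; in particular $I\in X$, so $X$ is unital and $M\subseteq X$. The C$^*$-algebra generated by $X$ contains $g=Ig$ for each $g\in S$ together with its adjoint $g^*=g^{-1}$, and because $S$ generates $G$ as a group, the C$^*$-algebra it generates contains every group element $g\in G$. Combined with $M\subseteq X$, this gives every product $mg$ with $m\in M$, $g\in G$, and the norm-closed linear span of such products is all of $\rcp$ by definition.

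For the boundary ideal claim, let $J\subseteq \rcp$ be a closed two-sided ideal such that the quotient map $q:\rcp\to\rcp/J$ is completely isometric when restricted to $X$. Since $M\subseteq X$, the map $q|_M$ is isometric and hence injective, so $J\cap M=\{0\}$. But Theorem \ref{thm5.2} asserts that any nonzero closed ideal in $\rcp$ must have nontrivial intersection with $M$, and therefore $J=\{0\}$. This identifies the Shilov boundary ideal of $\rcp$ as zero, so by Hamana's theorem $\rcp=C^*_{env}(X)$.

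The substantive work has already been done in Theorem \ref{thm5.2}; the remaining argument is essentially a bookkeeping exercise combining that result with Hamana's theorem and the observation that the hypothesis ``$S$ is unital'' forces $M\subseteq X$. Consequently I do not anticipate any genuine obstacle in this particular proof — the only point to be careful about is invoking the correct (Hamana) formulation of the boundary-ideal characterization, and checking that $X$ really is unital and really does generate $\rcp$, which follows from $S$ being a unital generating subset of $G$.
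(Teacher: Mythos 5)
Your proof is correct and follows essentially the same route as the paper: both arguments reduce to Theorem \ref{thm5.2} by observing that a nonzero closed ideal of $\rcp$ would have to meet $M\subseteq X$ nontrivially, contradicting the fact that the relevant map is (completely) isometric on $X$. The only cosmetic difference is that you invoke Hamana's boundary-ideal characterization, whereas the paper works directly with the universal property and shows that the kernel of the induced surjection $\pi:\rcp\to C^*_{env}(X)$ must be zero.
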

\begin{proof}
Let $\iota:X\to C^*_{env}(X)$ be a completely isometric unital embedding and let $\iota':X\to \rcp$ be the identity embedding, whose range generates $\rcp$. From the definition, there
exists a surjective *-homomorphism $\pi:\rcp \to C^*_{env}(X)$ so that $\pi\circ\iota'=\iota$. If $\pi$ were not a $\ast$-isomorphism, then it would have a nontrivial kernel $J$. By Theorem
\ref{thm5.2}, $J\cap M$ would contain a nonzero element $m$, giving the contradiction $\iota(m)=\pi(m)=0$. Thus $\pi$ is a $\ast$-isomorphism, proving the result.
\end{proof}

\section{Extension of maps}\label{sec6}
In this section we will consider $w^*$-closed $M$-bimodules $M\subseteq
X\subseteq\cp$ and isometric $w^*$-continuous maps $\theta : X\to \cp$ which respect the
modular structure in the sense that the restriction of $\theta$ to $M$ is a
$\ast$-automorphism and
\begin{equation}\label{eq6.1}
\theta (m_1xm_2)=\theta(m_1)\theta(x)\theta(m_2),\ \ \ \ m_1,m_2\in M,\ x\in X.
\end{equation}

Our objective is to show that such maps extend to $\ast$-automorphisms of $\cp$
when $X$ generates $\cp$. We will require some preliminary lemmas. The first is well known to experts, but we include it for completeness.
\begin{lem}\label{lem6.0}
Let $G$ be a discrete group acting on a factor $M$ by outer automorphisms $\alpha_g$, $g\in G$. Let $w\in \cp$ be a unitary that normalizes $M$. Then there exist $g_0\in
G$ and a unitary $u\in M$ so that $w=ug_0$.
\end{lem}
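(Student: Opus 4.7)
The plan is to extract the Fourier coefficients of $w$ and show that exactly one of them is nonzero, and is a unitary in $M$. Writing $\beta = \mathrm{Ad}\,w$ restricted to $M$, which is a $\ast$-automorphism of $M$ since $w$ normalizes $M$, the defining identity is $wm = \beta(m)w$ for $m\in M$. Let $w = \sum_{g\in G} x_g g$ be the Fourier series with $x_g = E(wg^{-1})$, which converges in the $B$-topology by the fact cited just before Theorem~\ref{thm4.3}.

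First I would apply the module conditional expectation $E(\,\cdot\, g^{-1})$ to both sides of $wm = \beta(m)w$. Using $mg^{-1} = g^{-1}\alpha_g(m)$ together with the $M$-bimodule property of $E$, this yields the coefficient relation
\begin{equation*}
x_g\,\alpha_g(m) = \beta(m)\, x_g, \qquad m\in M,\ g\in G.
\end{equation*}
Taking adjoints (and replacing $m$ by $m^*$) gives $\alpha_g(m)\,x_g^* = x_g^*\,\beta(m)$. Combining these two identities produces
\begin{equation*}
x_g^*x_g\,\alpha_g(m) = \alpha_g(m)\,x_g^*x_g,\qquad x_g x_g^*\,\beta(m) = \beta(m)\,x_g x_g^*,
\end{equation*}
so $x_g^*x_g$ and $x_g x_g^*$ both lie in $M\cap M' = \C I$ since $M$ is a factor. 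Hence, for each $g$, there is $c_g\geq 0$ with $x_g^* x_g = x_g x_g^* = c_g I$, so whenever $x_g\ne 0$ we may write $x_g = \sqrt{c_g}\, u_g$ for a unitary $u_g\in M$.

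The key step is to show that only one $x_g$ can be nonzero. Suppose, for contradiction, that $x_{g_1}\ne 0$ and $x_{g_2}\ne 0$ for distinct $g_1,g_2\in G$. The relation $x_{g_i}\alpha_{g_i}(m)=\beta(m)x_{g_i}$ rearranges to $u_{g_i}\alpha_{g_i}(m)u_{g_i}^* = \beta(m)$, so setting $v = u_{g_1}^*u_{g_2}\in M$ (a unitary) gives
\begin{equation*}
v\,\alpha_{g_2}(m) = \alpha_{g_1}(m)\,v,\qquad m\in M,
\end{equation*}
or equivalently $vm = \alpha_{g_1g_2^{-1}}(m)v$ after substituting $\alpha_{g_2^{-1}}(m)$ for $m$. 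Since $g_1g_2^{-1}\ne e$, the automorphism $\alpha_{g_1g_2^{-1}}$ is outer, and freeness forces $v=0$, contradicting unitarity. So the Fourier series collapses to a single term $w = x_{g_0} g_0$ (as an identity in $\cp$, by convergence in the $B$-topology). Finally, $w^*w = ww^* = I$ forces $x_{g_0}^*x_{g_0} = x_{g_0}x_{g_0}^* = I$, so $u := x_{g_0}$ is a unitary in $M$ and $w = ug_0$. The main subtlety is the freeness argument in the uniqueness step; the rest is a clean Fourier-coefficient computation once one trusts $B$-convergence of the series.
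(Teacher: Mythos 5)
Your proposal is correct and follows essentially the same route as the paper: extract the Fourier coefficients of $w$, derive the intertwining relation between each coefficient and the automorphism $\mathrm{Ad}\,w$, and use outerness (via freeness) to rule out two distinct nonzero coefficients, leaving a single unitary term. The paper phrases the uniqueness step as ``$\alpha_g\circ\theta$ is inner whenever $w_g\ne 0$, so two nonzero coefficients would make $\alpha_{g_0g_1^{-1}}$ inner,'' which is the same contradiction you reach by forming $v=u_{g_1}^*u_{g_2}$.
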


\begin{proof}
Let $\theta$ be the $\ast$-automorphism of $M$ defined by $\theta(x)=w^*xw$, $x\in M$, and let $w=\sum_{g\in G}w_gg$ be the Fourier series of $w$. Then
\begin{equation}\label{eq6.01}
\sum_{g\in G}xw_gg=\sum_{g\in G}w_g\alpha_g(\theta(x))g,\ \ \ x\in M.
\end{equation}
Thus
\begin{equation}\label{eq6.02}
xw_g=w_g\alpha_g(\theta(x)),\ \ \ x\in M,
\end{equation}
so $\alpha_g\circ \theta$ is inner whenever $w_g\ne 0$. If $g_0\ne g_1$ are elements of $G$ for which $w_{g_0}$ and $w_{g_1}$ are both nonzero, then
$\alpha_{g_0}\circ\theta$ and $\alpha_{g_1}\circ\theta$ are both inner, implying that
$\alpha_{g_0g_1^{-1}}=\alpha_{g_0}\circ\theta\circ\theta^{-1}\circ\alpha_{g_1^{-1}}$ is also inner, contradicting the hypothesis that this is outer. Thus there exists
precisely one $g_0\in G$ so that $w_{g_0}\ne 0$, and so $w=w_{g_o}g_0$ and $w_{g_0}$ is then seen to be a unitary, completing the proof.
\end{proof}

\begin{lem}\label{lem6.1}
Let $G$ be a discrete group acting on a factor $M$ by outer automorphisms $\alpha_g$, $g\in G$.
Let $\theta$ be a $\ast$-automorphism of $\rcp$ satisfying $\theta(M)=M$. Then
$\theta$ extends to a $\ast$-automorphism of $\cp$.
\end{lem}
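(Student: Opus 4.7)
The plan is to identify the action of $\theta$ on the generators of $\cp$ explicitly, show that $\theta$ intertwines the canonical conditional expectation, and then extend by Bures continuity.

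First I would apply Lemma~\ref{lem6.0}: for each $g\in G$, the unitary $\theta(g)\in \rcp$ normalizes $\theta(M)=M$, so $\theta(g)=u_g\sigma(g)$ for a unitary $u_g\in M$ and $\sigma(g)\in G$. Comparing $\theta(g_1g_2)=\theta(g_1)\theta(g_2)$ and using the relation $gm=\alpha_g(m)g$ forces $\sigma:G\to G$ to be a group homomorphism with $u_{g_1g_2}=u_{g_1}\alpha_{\sigma(g_1)}(u_{g_2})$; applying the same analysis to $\theta^{-1}$ gives $\sigma$ a two-sided inverse. Writing $\beta=\theta|_M\in\mathrm{Aut}(M)$, which is automatically normal, a direct calculation on the generating monomials shows $E(\theta(mg))=\beta(m)\delta_{g,e}=\beta(E(mg))$, and norm continuity on $\rcp$ upgrades this to $E\circ\theta=\beta\circ E$ on all of $\rcp$.

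This intertwining identity has the crucial consequence that $\theta$ is Bures-continuous: for $x\in\rcp$ and any normal state $\phi$ on $M$,
\[
\phi(E(\theta(x)^*\theta(x)))=\phi(\beta(E(x^*x)))=(\phi\circ\beta)(E(x^*x)),
\]
so $\theta$ simply permutes the Bures seminorms via $\phi\leftrightarrow\phi\circ\beta$ and in particular sends Bures-Cauchy nets to Bures-Cauchy nets. To define the extension, given $x\in\cp$, I would use Remark~\ref{rem3.3}(i) applied to the norm-bounded convex set $\rcp\cap\{y:\|y\|\leq\|x\|\}$, whose $B$-closure coincides with its $w^*$-closure (which contains $x$ by Kaplansky density), to obtain a net $(y_\lambda)$ in $\rcp$ with $\|y_\lambda\|\leq\|x\|$ and $y_\lambda\to x$ in the Bures topology. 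Then $(\theta(y_\lambda))$ is norm-bounded and Bures-Cauchy; $w^*$-compactness yields a $w^*$-convergent subnet whose limit (by the argument in the proof of Lemma~\ref{lem3.2}) is also a Bures-limit, and a Cauchy-net argument in the Hausdorff Bures topology gives convergence of the whole net to some element $\tilde\theta(x)\in\cp$. Standard double-net reasoning establishes that $\tilde\theta(x)$ is independent of the choice of $(y_\lambda)$.

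It remains to verify that $\tilde\theta$ is a $*$-automorphism of $\cp$ extending $\theta$, and this is where I expect the main technical difficulty. The extension property and linearity are immediate, and for $*$-preservation I would observe that the Bures-convergent approximating net $(y_\lambda)$ is simultaneously $w^*$-convergent to $x$ (again by the Lemma~\ref{lem3.2} argument), so $y_\lambda^*\to x^*$ in $w^*$ and hence $\theta(y_\lambda)^*=\theta(y_\lambda^*)\to\tilde\theta(x)^*$ in $w^*$; reconciling this with a Bures-approximation of $x^*$ then gives $\tilde\theta(x^*)=\tilde\theta(x)^*$. Multiplicativity is the subtlest point: approximating $x,y\in\cp$ by bounded Bures-convergent nets in $\rcp$ and invoking the separate Bures-continuity of left and right multiplication on norm-bounded sets, an iterated-limit argument produces $\tilde\theta(xy)=\tilde\theta(x)\tilde\theta(y)$. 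Surjectivity and injectivity follow by applying the same construction to $\theta^{-1}$ and checking the composition is the identity on $\rcp$, hence on $\cp$ by Bures density. The hard part is maintaining the compatibility between the $w^*$- and Bures-approximations of pairs of elements so that the algebraic identities defining a $*$-homomorphism transfer cleanly from the dense subalgebra $\rcp$ to $\cp$.
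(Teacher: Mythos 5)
Your opening reductions are correct and the intertwining identity $E\circ\theta=\beta\circ E$ on $\rcp$ (hence Bures continuity of $\theta$ there) is a genuinely nice observation that the paper does not use. But the route you then take --- extend by Bures-Cauchy completion and transfer the algebraic identities afterwards --- has a real gap exactly where you flag ``the main technical difficulty,'' and it is not a difficulty that resolves itself. The Bures seminorms $x\mapsto\phi(E(x^*x))^{1/2}$ are one-sided: the adjoint is \emph{not} Bures continuous, so a bounded net $y_\lambda\to x$ in the Bures topology gives no Bures-approximating net for $x^*$, and the proposed ``reconciliation'' of the $w^*$-limit of $\theta(y_\lambda)^*$ with a separate Bures approximation of $x^*$ is precisely the missing content: it amounts to knowing that $\tilde\theta$ applied to \emph{any} bounded $w^*$-convergent net in $\rcp$ converges $w^*$ to the value of $\tilde\theta$ at the limit, i.e.\ to the $w^*$-continuity of the extension, which you have not established at that stage. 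The same issue infects multiplicativity (right multiplication by a general element of $\cp$ is not obviously Bures continuous, so the iterated-limit argument needs justification). There is also a smaller, fixable gap earlier: Lemma \ref{lem3.2} shows a bounded Bures-\emph{convergent} net converges $w^*$ to the same limit; to conclude that a bounded Bures-\emph{Cauchy} net converges to the $w^*$-limit of a subnet you need an extra argument (Cauchy in the $H_\phi$-norm plus weak convergence in $H_\phi$ implies norm convergence), which is not literally the cited proof.

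The paper sidesteps all of this by working with Fourier coefficients rather than with the Bures topology. Writing $\theta(g)=w_g\rho(g)$ as you do, it takes a bounded net $x^\beta\to x$ from the finitely supported elements, passes to a $w^*$-convergent subnet of $\theta(x^\beta)$, and computes the $\rho(g)$-coefficient of any cluster point as $\theta(x_g)w_g$ using only the $w^*$-continuity of $z\mapsto E(z h^{-1})$ and of $\theta|_M$. Since every cluster point has the same coefficients, the extension $\phi(\sum x_g g)=\sum\theta(x_g)\theta(g)$ is well defined, contractive with contractive inverse, and its $w^*$-continuity follows from the same coefficient computation via Krein--Smulian; being a $w^*$-continuous extension of a $\ast$-homomorphism on the $w^*$-dense $\ast$-subalgebra $\rcp$, it is automatically a $\ast$-automorphism, with no separate verification of $\ast$-preservation or multiplicativity needed. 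If you want to salvage your argument, the cleanest repair is to compute the Fourier coefficients of your $\tilde\theta(x)$ (they are $\beta(x_g)u_g$ at position $\sigma(g)$) and prove $w^*$-continuity from that --- at which point the Bures machinery has become superfluous and you have reproduced the paper's proof.
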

\begin{proof}
For each $g\in G$, $\theta(g)$ is a unitary normalizer of $M$ and so, by Lemma \ref{lem6.0}, has the
form $w_g\rho(g)$ where $w_g$ is a unitary in $M$ and $\rho$ is a
permutation of $G$ with $\rho(e)=e$. Consider an element $x=\sum_{g\in
G}x_gg\in \cp$  and choose, by the Kaplansky density
theorem, a uniformly bounded net  $(x^\beta)_{\beta\in B}$ from
${\mathrm{span}}\,\sett{yg:y\in M,\ g\in G}$ converging to $x$ in the
$w^*$-topology. Drop to a subnet if necessary to assume that the net
$(\theta(x^\beta))_{\beta\in B}$ converges in the $w^*$-topology to an
element $y\in \cp$. If we write $x^\beta = \sum_{g\in
G}x_g^\beta g$, a finite sum, then $\theta(x^\beta)=\sum_{g\in
G}\theta(x^\beta_g)w_g\rho(g)$ and so the $\rho(g)$-coefficient of $y$ is
\begin{equation}\label{eq6.2}
w^*-\lim_{\beta}\theta(x_g^\beta)w_g=\theta(x_g)w_g
\end{equation}
since $\theta|_M$ is $w^*$-continuous. In particular, $y$ is independent of
the choice of approximating net. Thus there is a well-defined map
$\phi:\cp\to\cp$ given by
\begin{equation}\label{eq6.3}
\phi\left({\textstyle\sum_{g\in G}}\,x_gg\right)={\textstyle\sum_{g\in G}}\,\theta(x_g)\theta(g),\ \ \ \ x={\textstyle\sum_{g\in
G}}\,x_gg\in \cp.
\end{equation}
It is clear from its definition that $\phi$ is linear. If $\|x\|\leq 1$, then the net $(x^\beta)_{\beta\in B}$ could have been chosen to be a net of contractions,
showing that $\|\phi(x)\|\leq 1$. Thus $\phi$ is a linear contraction.

By applying the same argument to $\theta^{-1}$, we find that $\phi$ has a
contractive inverse  and is thus a surjective isometry.

We now turn to the $w^*$-continuity of $\phi$. By the Krein-Smulian
theorem, in proving $w^*$-continuity of a linear map, it suffices to
consider a uniformly bounded net $(x^\beta)_{\beta\in B}$ converging to 0 in
the $w^*$-topology. Since the net $(\phi(x^\beta))_{\beta\in B}$ is
uniformly bounded, let $y$ be any $w^*$-limit of a subnet
$(\phi(x^\gamma))_{\gamma\in \Gamma}$. Then, writing $x^\gamma=\sum_{g\in
G}x^\gamma_gg$, we see that $w^*-\underset{\gamma}{\lim}\, x^\gamma_g=0$ for each $g\in
G$, so the $w^*$-limit of the $\rho(g)$-coefficients of $\phi(x^\beta)$ is
0. Thus $y=0$ and it follows that $\phi(x^\beta)\to 0$ in the
$w^*$-topology. Having shown that $\phi$ is $w^*$-continuous, we immediately
conclude that it is a $\ast$-automorphism of $\cp$.
\end{proof}
\begin{lem}\label{lem6.2}
Let $G$ be a discrete group acting on a factor $M$ by outer automorphisms $\alpha_g$, $g\in G$.
Let $M\subseteq X\subseteq \cp$ be a $w^*$-closed $M$-bimodule and let $\theta: X\to X$
be a $w^*$-continuous isometric surjective isomorphism so that $\theta$ is a
$\ast$-automorphism of $M$ and
\begin{equation}\label{eq6.4}
\theta(m_1xm_2)=\theta(m_1)\theta(x)\theta(m_2),\ \ \ \ m_1,m_2\in M,\ \ x\in X.
\end{equation}
If $x\in X$ has Fourier series $\sum_{g\in G}x_gg$, then $\theta(x)$ has
Fourier series $\sum_{g\in G}\theta(x_g)\theta(g)$.
\end{lem}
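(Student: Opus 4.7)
The plan is to first pin down the precise structure of each $\theta(g)$ via the bimodule property combined with proper outerness of $\alpha$, and then to extend the resulting formula from finite partial Fourier sums to arbitrary $x \in X$ by a $w^*$-continuity argument. The main subtlety is in the last step, since the partial Fourier sums of an element of $X$ are not norm-bounded in general.

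For each $g \in G$ occurring with nonzero coefficient in the Fourier expansion of some $x \in X$, Theorem~\ref{thm4.3} gives $g \in X$, so $\theta(g)$ is defined. Setting $\sigma := \theta|_M$ and combining the bimodule relation with the crossed-product identity $gm = \alpha_g(m)g$ yields $\theta(g)\theta(m) = \theta(\alpha_g(m))\theta(g)$ for every $m \in M$. Expanding the Fourier series $\theta(g) = \sum_{h \in G}(\theta(g))_h h$ and comparing $h$-th coefficients produces
\[
(\theta(g))_h\,n \;=\; (\sigma\alpha_g\sigma^{-1}\alpha_h^{-1})(n)\,(\theta(g))_h, \qquad n \in M.
\]
By Lemma~\ref{lem5.0}, a nonzero $(\theta(g))_h$ forces $\sigma\alpha_g\sigma^{-1}\alpha_h^{-1}$ to fail proper outerness, hence to be inner in the factor $M$. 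Outerness of $\alpha$ ensures distinct cosets of $\mathrm{Inn}(M)$ for distinct $h$, so there is a unique $h = \rho(g) \in G$ with $(\theta(g))_h \neq 0$; the same coset observation gives that $\rho$ is injective on $G \cap X$ (since $\rho(g_1) = \rho(g_2)$ would render $\sigma\alpha_{g_1 g_2^{-1}}\sigma^{-1}$, and hence $\alpha_{g_1 g_2^{-1}}$, inner). Factoriality together with the isometry $\|\theta(g)\| = 1$ then forces $u_g := (\theta(g))_{\rho(g)}$ to be a unitary in $M$, so $\theta(g) = u_g \rho(g)$ for each $g \in G \cap X$.

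For any finite partial sum $x_F = \sum_{g \in F} x_g g \in X$, linearity and the bimodule property give
\[
\theta(x_F) \;=\; \sum_{g \in F}\theta(x_g)\theta(g) \;=\; \sum_{g \in F}\sigma(x_g)u_g\rho(g),
\]
whose $h$-th Fourier coefficient equals $\sigma(x_g)u_g$ when $h = \rho(g)$ for some $g \in F$ and vanishes otherwise. To promote this identification to arbitrary $x \in X$, consider for each $h \in G$ the two maps $X \to M$ given by $y \mapsto E(\theta(y)h^{-1})$ and $y \mapsto \sigma(y_{\rho^{-1}(h)})u_{\rho^{-1}(h)}$ (with the latter read as $0$ when $h \notin \rho(G \cap X)$). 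Both are $w^*$-continuous, using $w^*$-continuity of $\theta$, of $E$, of $\sigma$, and of Fourier coefficient extraction; and they agree on the algebraic span $Y := \mathrm{span}\{mg : m \in M,\ g \in G \cap X\}$ by the preceding computation. Thus the conclusion reduces to showing $Y$ is $w^*$-dense in $X$; this is the main technical obstacle, and it calls for a Kaplansky-type argument producing bounded approximants to elements of $X$ by elements of $Y$, combined with Lemmas~\ref{lem3.1} and~\ref{lem3.2} to convert the $B$-density of the Fourier partial sums into $w^*$-density on norm-bounded subsets of $X$.
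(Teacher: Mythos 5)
Your first step --- showing $\theta(g)=u_g\rho(g)$ for a unitary $u_g\in M$ and an injection $\rho$ of $S$ into $G$ --- is correct and is essentially the paper's route as well (the paper packages the coefficient-comparison argument as Lemma \ref{lem6.0}, after first observing from \eqref{eq6.6} that $\theta(g)\theta(g)^*,\theta(g)^*\theta(g)\in M'\cap(\cp)=\mathbb{C}I$, so that $\theta(g)$ is a unitary normalizer). The computation on finite sums and the $w^*$-continuity of both coefficient maps are also fine.

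The final step, however, is a genuine gap, and you have correctly located it but not filled it. Your argument reduces the lemma to the $w^*$-density of $Y=\mathrm{span}\{mg:m\in M,\ g\in S\}$ in $X$. But $\overline{Y}^{w^*}\subseteq X\subseteq\overline{Y}^{B}$, and whether these coincide is precisely the question of whether every $w^*$-closed $M$-bimodule is of the form $X_S$ --- which the paper proves only under the $AP$ hypothesis (Theorem \ref{thm4.4}(ii)) and explicitly describes as open in general; Remark \ref{rem3.3}(ii) shows that $B$- and $w^*$-closures genuinely differ for unbounded sets, so no soft argument will do. There is also no Kaplansky-type theorem available here: Kaplansky density is a statement about $*$-subalgebras, and a bounded-approximation principle for $Y$ in $X$ would itself settle the open question. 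Lemma \ref{lem6.2} carries no $AP$ hypothesis, so your proof as proposed does not close. The paper avoids density altogether: it supposes the formula fails at some $\rho(g_0)$-coefficient, reduces to $g_0=e$ (via the auxiliary bimodule $Xg_0^{-1}$ and map $\theta'$), normalizes so that $x_e=0$ while $\theta(x)_e=I$, and then applies the Christensen--Sinclair averaging maps $R_\beta(z)=\sum_j m_j^*zm_j$ of Theorem \ref{thm4.1}. These are uniformly bounded, preserve $X$, and force $R_\beta(x)\to 0$ in the $w^*$-topology (all coefficients die by Lemma \ref{lem4.2}), whereas $\theta(R_\beta(x))=\sum_j\theta(m_j)^*\theta(x)\theta(m_j)$ has $e$-coefficient identically $I$, contradicting $w^*$-continuity of $\theta$. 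You would need to replace your density reduction with an argument of this local, coefficientwise type.
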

\begin{proof}
Let $S$ be the set of group elements that appear in the Fourier series of elements of $X$ with a nonzero coefficient. By Theorem \ref{thm4.3}, $\sett{mg:m\in M,\ g\in S}\subseteq X$. If $g\in S$,
then
\begin{equation}\label{eq6.5}
gm=\alpha_g(m)g,\ \ \ \ m\in M,
\end{equation}
so
\begin{equation}\label{eq6.6}
\theta(g)\theta(m)=\theta(\alpha_g(m))\theta(g),\ \ \ \ m\in M.
\end{equation}
This implies that $\theta(g)\theta(g)^*$ and $\theta(g)^*\theta(g)$ lie in $M'\cap(\cp)=\mathbb{C}I$, and since these elements have norm 1 we conclude that $\theta(g)$ is a unitary.
Moreover, by \eqref{eq6.6}, $\theta(g)$ normalizes $M$ so by Lemma \ref{lem6.0}, it has the form $u_g\rho(g)$ for a unitary $u_g\in M$ and a permutation $\rho$ of $S$, with $\rho(e)=e$ and $u_e=I$.
Consequently $\sum_{g\in G}\theta(x_g)\theta(g)$ is indeed a Fourier series although it is not expressed in the customary way.

We argue by contradiction, so suppose that there are elements $x\in X$ and $g_0\in S$ so that $x$ has Fourier series $\sum_{g\in G} x_gg$ but the $\rho(g_0)$-term in the Fourier series
of $\theta(x)$ is not $\theta(x_{g_0})\theta(g_0)$. We first consider the special case where $g_0=e$. By subtracting $x_e$ from $x$, we may assume that $x$ has $e$-coefficient 0 but
$\theta(x)$ has a nonzero $e$-coefficient. Multiplying on the left by a suitably chosen element of $M$, this $e$-coefficient may be taken to be a nonzero projection $p\in M$. Choose
partial isometries $(v_\lambda)_{\lambda\in \Lambda}$ so that $\sum_{\lambda\in \Lambda}\,v_\lambda pv_\lambda^*=I$ and let $F\subseteq \Lambda$ be a finite subset. For each $z\in \cp$, define a contraction $T_F$ by $T_F(z)=\sum_{\lambda\in
F}\theta^{-1}(v_\lambda)z\theta^{-1}(v_\lambda^*)$. The net $(T_F(x))$, indexed by the finite subsets of $\Lambda$, is uniformly bounded so, by dropping to a subnet if necessary, we may assume convergence in the
$w^*$-topology to an element $y\in X$ whose $e$-coefficient is 0. By $w^*$-continuity of $\theta$, the net $(\theta(T_F(x)))$ converges in the $w^*$-topology to $\theta(y)$ whose
$e$-coefficient is $\sum_{\lambda\in \Lambda}v_\lambda pv_\lambda^*=I$. Replacing $x$ by $y$ if necessary, we may make the further assumption that the $e$-coefficient of $\theta(x)$ is 1.

 From Section \ref{sec4}, let $\beta=(m_j)_{j\in J}$ be the set 
of collections of operators satisfying $\sum_{j\in J}m_j^*m_j=I$ and such that Theorem \ref{thm4.1} holds. Define a complete contraction $R_{\beta}:\cp\to\cp$ by
\begin{equation}\label{eq6.7}
R_\beta (z)={\textstyle\sum_{j\in J}}m_j^*zm_j,\ \ z\in \cp,
\end{equation} where the sum converges in the $w^*$-topology. Then $R_\beta$ maps $X$ to itself since $X$ is $w^*$-closed, and the $g$-coefficient of $R_\beta(z)$ for $z=\sum_{g\in
G}z_gg$ is $\sum_{j\in J}m_j^*z_g\alpha_g(m_j)$. Thus $R_\beta(x)$ has $e$-coefficient 0 while the $g$-coefficients for $g\ne e$ tend to 0 in the $w^*$-topology over the net $(\beta)$ as
in the proof of Theorem \ref{thm4.3}. By dropping to a subnet, we may assume that $w^*-\underset{\beta}{\lim}\, R_\beta(x)$ exists, and this must be 0. Thus $w^*-\underset{\beta}{\lim}\,  \theta(R_\beta(x))=0$. From
\eqref{eq6.7},
\begin{equation}\label{eq6.8}
\theta(R_\beta(x))={\textstyle\sum_{j\in J}}\theta(m_j^*)\theta(x)\theta(m_j)
\end{equation}
and the $e$-coefficient in this sum is 1. Thus $\theta(R_\beta(x))$ cannot converge to 0 in the $w^*$-topology, a contradiction.

Returning to the general case, suppose that the desired formula fails at the $\rho(g_0)$-term. Define a $w^*$-closed $M$-bimodule by $Y=Xg_0^{-1}$ and define $\theta':Y \to Y$ by
\begin{equation}\label{eq6.9}
\theta'(xg_0^{-1})=\theta(x)\theta(g_0)^*,\ \ \ \ x\in X.
\end{equation}
This map is isometric since $\theta(g_0)$ is a unitary. Now, for $m\in M$,
\begin{equation}\label{eq6.10}
\theta'(m)=\theta'(mg_0g_0^{-1})=\theta(mg_0)\theta(g_0)^*=\theta(m)\theta(g_0)
\theta(g_0)^*=\theta(m),
\end{equation}
while, for $x\in X$ and $m\in M$,
\begin{equation}\label{eq6.11}
\theta'(mxg_0^{-1})=\theta(mx)\theta(g_0)^*=
\theta(m)\theta(x)\theta(g_0)^*=\theta'(m)\theta'(xg_0^{-1}).
\end{equation}
For multiplication on the right, first note that
\begin{equation}\label{eq6.12}
g_0\alpha_{g_0}^{-1}(m)=mg_0,\ \ m\in M,
\end{equation}
yielding
\begin{equation}\label{eq6.13}
\theta(\alpha_{g_0}^{-1}(m))=\theta(g_0)^*\theta(m)\theta(g_0),\ \ m\in M.
\end{equation}
Thus
\begin{align}
\theta'(xg_0^{-1}m)&=\theta'(x\alpha_{g_0}^{-1}(m)g_0^{-1})
=\theta(x\alpha_{g_0}^{-1}(m))\theta(g_0)^*
=\theta(x)\theta(\alpha_{g_0}^{-1}(m))\theta(g_0)^*\notag\\
&=\theta (x)\theta(g_0)^*\theta(m)
=\theta'(xg_0^{-1})\theta'(m),\ \ m\in M.\label{eq6.14}
\end{align}  
This puts us into the case of failure at $g_0=e$ for the pair $(Y,\theta')$ and we have already shown that this cannot happen.
\end{proof}
We come now to the main result of the section for which we will need the concept of a norming subalgebra \cite{PSS}. Let $A\subseteq B$ be an inclusion of C$^*$-algebras. If $X$ is an $n\times n$ matrix over $B$ and $R$ and $C$ are respectively rows and columns of length $n$ over $A$ of unit norm, then
\begin{equation}\label{eq6.15}
\|RXC\|\leq \|X\|.
\end{equation}
If the supremum of the left hand side over rows and columns of unit norm equals $\|X\|$ for every matrix $X$ of any size, then we say that $A$ norms $B$. This has proved useful in settling issues of automatic complete boundedness of bounded maps, and will be used below.
\begin{thm}\label{thm6.3}
Let $M\subseteq X\subseteq \cp$ be a $w^*$-closed $M$-bimodule that generates $\cp$, let $\theta:X\to X$ be a $w^*$-continuous 
surjective isometry satisfying
\begin{equation}\label{eq6.16}
\theta(m_1xm_2)=\theta(m_1)\theta(x)\theta(m_2),\ \ m_1,m_2\in M,\ x\in X,
\end{equation}
and suppose that the restriction of $\theta$ to $M$ is a $\ast$-automorphism. Then $\theta$ extends uniquely to a $\ast$-automorphism of $\cp$.
\end{thm}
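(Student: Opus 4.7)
The plan is to reduce to the reduced $C^*$-crossed product $\rcp$, apply the universal property of the $C^*$-envelope from Theorem \ref{thm5.3}, and then lift the resulting automorphism to $\cp$ using Lemma \ref{lem6.1}. Let $S\subseteq G$ be the set of group elements appearing with nonzero Fourier coefficient in some element of $X$. Since $M\subseteq X$ we have $e\in S$, and Theorem \ref{thm4.3} provides $Mg\subseteq X$ for every $g\in S$. The argument from the proof of Lemma \ref{lem6.2}, together with Lemma \ref{lem6.0}, shows that $\theta(g)=u_g\rho(g)$ for a unitary $u_g\in M$ and a bijection $\rho:S\to S$ with $\rho(e)=e$ and $u_e=I$. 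Because Fourier series converge in the $B$-topology and each $M\rtimes_{\alpha}H$ is $B$-closed, every $x\in X$ lies in $M\rtimes_\alpha\langle S\rangle$; the hypothesis that $X$ generates $\cp$ as a von Neumann algebra therefore forces $\langle S\rangle=G$.

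Let $Y$ be the norm closure in $\rcp$ of $\text{span}\,\sett{mg:m\in M,\ g\in S}$, which lies in $X$ since $X$ is norm closed. Because $\theta(mg)=\theta(m)u_g\rho(g)\in M\rho(g)$ and $\rho$ is a bijection of $S$, $\theta$ restricts to a norm-isometric $M$-bimodular map of $Y$ into itself; the analogous observation for $\theta^{-1}$ makes this restriction surjective. By Theorem \ref{thm5.3}, $\rcp=C^*_{env}(Y)$, so if $\theta|_Y$ is shown to be \emph{completely} isometric, the universal property applied to the unital complete isometries $\theta|_Y$ and $(\theta|_Y)^{-1}$ produces a pair of mutually inverse $\ast$-homomorphisms of $\rcp$, i.e., a $\ast$-automorphism $\Theta_r$ of $\rcp$ extending $\theta|_Y$. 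Since $\Theta_r|_M=\theta|_M$ is a $\ast$-automorphism of $M$, Lemma \ref{lem6.1} then extends $\Theta_r$ to a $\ast$-automorphism $\Theta$ of $\cp$.

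The main obstacle is the upgrade from isometric to completely isometric for $\theta|_Y$. For this I would invoke the norming theorem of Pop-Sinclair-Smith \cite{PSS}: a direct Fourier-series computation using outerness gives $M'\cap\cp=\bb{C}I$, from which $M$ norms $\cp$, and consequently $\rcp$. For a matrix $(x_{ij})$ over $Y$ and unit rows and columns $R,C$ over $M$, $M$-bimodularity of $\theta$, together with the fact that $\theta|_M$ is a $\ast$-automorphism (so preserves the relevant row and column norms), yields $R(\theta(x_{ij}))C=\theta(\tilde R(x_{ij})\tilde C)$ with $\tilde R=(\theta|_M)^{-1}(R)$ and $\tilde C=(\theta|_M)^{-1}(C)$ still of unit norm. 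Since $\theta$ is isometric on $Y$, taking the supremum over such $R$ and $C$ and using the norming property gives $\|(\theta(x_{ij}))\|=\|(x_{ij})\|$.

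Finally, for the agreement $\Theta|_X=\theta$ and uniqueness: Lemma \ref{lem6.2} shows that for $x=\sum_g x_gg\in X$, the Fourier series of $\theta(x)$ is $\sum_g\theta(x_g)\theta(g)$, while the explicit formula derived in the proof of Lemma \ref{lem6.1} shows that $\Theta(x)$ has Fourier series $\sum_g\Theta_r(x_g)\Theta_r(g)$. These coincide because $x_g=0$ for $g\notin S$ and $\Theta_r$ agrees with $\theta$ on $M$ and on each $g\in S$, so $\Theta|_X=\theta$. Uniqueness of the extension follows because any $\ast$-automorphism of $\cp$ extending $\theta$ is determined by its values on $M$ and on $S$ (which generates $G$), together with normality.
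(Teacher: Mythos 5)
Your proposal is correct and follows essentially the same route as the paper's proof: the same set $S$ and norm-closed bimodule $Y$, the upgrade of $\theta$ to a complete isometry via the norming results of \cite{PSS,PS}, the identification $C^*_{env}(Y)=\rcp$ from Theorem \ref{thm5.3} to produce the $\ast$-automorphism of $\rcp$, the lift to $\cp$ via Lemma \ref{lem6.1}, and the Fourier-series comparison through Lemma \ref{lem6.2} to show agreement on $X$. The only cosmetic difference is that you read off the Fourier expansion of the extension from the explicit formula in Lemma \ref{lem6.1} rather than reapplying Lemma \ref{lem6.2} to $\cp$, which amounts to the same computation.
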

\begin{proof}
Let $S$ be the set of $g\in G$ so that $g$ appears with a nonzero coefficient in the Fourier series of an element of $X$. Let
\begin{equation}\label{eq6.17}
Y=\overline{{\mathrm{span}}}^{\|\cdot\|}\sett{mg:m\in M,\ g\in S},
\end{equation}
which is contained in $X$ by Theorem \ref{thm4.3}. For each $g\in S$, $\theta(g)$ is a normalizer of $M$ and consequently $\theta$ maps $Y$ to itself. When $M$ is type II$_{\infty}$ or III, it
norms $\cp$, \cite{PSS}, while the same conclusion is reached for type II$_1$ factors in \cite{PS}. Thus $X$ and $Y$ are normed by $M$ and so $\theta$ is a complete isometry. Since $X$
generates $\cp$, $S$ must generate $G$ and so $Y$ must generate $\rcp$ as a C$^*$-algebra. By Theorem \ref{thm5.3}, the C$^*$-envelope of $Y$ is $\rcp$ and so, from the theory of
C$^*$-envelopes, the embedding $\theta$ of $Y$ into $\rcp$ extends to a $\ast$-automorphism $\phi$ of $\rcp$, and thence to a $\ast$-automorphism of $\cp$, also denoted by $\phi$ (see Lemma
\ref{lem6.1}). For $g\in G$ and $m\in M$, we have
\begin{equation}\label{eq6.18}
\phi(mg)=\theta(mg)=\theta(m)\theta(g).
\end{equation}
For each $x\in X$ with Fourier series $\sum_{g\in G}x_gg$, Lemma \ref{lem6.2} gives $\theta(x)=\sum_{g\in G}\theta(x_g)\theta(g)$. If we apply this lemma again with $\cp$ in place of $X$ and
$\phi$ replacing $\theta$, then we obtain $\phi(z)=\sum_{g\in G}\phi(z_g)\phi(g)$ for each $z=\sum_{g\in G}z_gg\in \cp$. Since $\theta$ and $\phi$ agree on $Y$, we now see that they
agree on $X$. Thus we have found an extension to $\cp$, and uniqueness is immediate since any two extensions will agree on $X$ which contains a generating set for $\cp$.
\end{proof}
\begin{rem}
Under the notation and hypotheses of Theorem \ref{thm6.3}, if $X$ happens to be an intermediate subalgebra, then $\theta$ is the restriction of a $\ast$-automorphism of $\cp$ and so is
automatically an algebraic isomorphism of $X$. It is perhaps surprising that such a conclusion follows from the assumption of just being an isometry.
 \end{rem}

\section{Regular subfactors}\label{sec7}

In this section we investigate the situation of a regular inclusion $M\subseteq N$ of II$_1$ factors, where {\emph{regular}} means that $N$ is generated by the group
$\mathcal{N}(M\subseteq N)$ of unitaries in $N$ that normalize $M$. In particular, we will consider whether analogous results to those obtained in Section \ref{sec6}
hold for a $w^*$-closed $Q$-bimodule $X$ with $Q\subseteq X\subseteq N$, where $Q$ is the von Neumann algebra generated by $M$ and its relative commutant $C:=M'\cap N$.
We will state below a structural result for such inclusions, but first we establish some notation and prove two preliminary lemmas.

If we let $L$ denote the group generated by the commuting groups $\mathcal{U}(M)$ and $\mathcal{U}(C)$, then $L$ is a normal subgroup of $\mathcal{N}(M\subseteq N)$ and
there is a short exact sequence
\begin{equation}\label{eq6001a}
1\rightarrow L\rightarrow \mathcal{N}(M\subseteq N)\rightarrow G \rightarrow 1
\end{equation}
where $G$ is the quotient group $\mathcal{N}(M\subseteq N)/L$ which we view as a discrete group. The quotient map has a cross section $g\mapsto u_g$, $g\in G$, which is
defined pointwise by choosing representatives of the cosets, and we always make the choice that $u_e=1$. For each pair $g,h\in G$, the unitaries $u_gu_h$ and $u_{gh}$
differ by an element of $L$, and so there is an $L$-valued 2-cocycle $\omega(g,h)$ on $G\times G$ such that
\begin{equation}\label{eq6001b}
u_gu_h=\omega(g,h)u_{gh},\ \ \ \ g,h\in G.
\end{equation}
In general, it is only possible to choose a homomorphic cross section when $\omega$ is a 2-coboundary. Each $u_g$ induces a $\ast$-automorphism Ad$\,u_g$ on $Q$ which we
denote by $\alpha_g$, and \eqref{eq6001b} gives
\begin{equation}\label{eq6001c}
\alpha_g\alpha_h={\text{Ad}}\,(\omega(g,h))\alpha_{gh},\ \ \ \ g,h\in G,
\end{equation}
so that the map $g\mapsto \alpha_g$ is not generally a homomorphism of $G$ into Aut$(Q)$. The next two lemmas set out the properties of the $u_g$'s and $\alpha_g$'s. The first of these is due to Kallman \cite{Kal}.  We include a short proof for the reader's convenience. We use the notation $E_P$ to denote the trace preserving conditional expectation of $N$ onto a von Neumann subalgebra $P$. Note that $E_P$ is a $P$-bimodule map.

\begin{lem}\label{lem7.1a}
Let $M\subseteq N$ be an inclusion of II$_1$ factors and let $\tau$ be the trace on $N$.  Let
$C=M'\cap N$ and let $Q=W^*(M,C)$. If $\theta$ is a $\ast$-automorphism of $Q$ which restricts to an outer $\ast$-automorphism of $M$, then $\theta$ is properly outer on
$Q$.
\end{lem}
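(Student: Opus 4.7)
The plan is to exploit the freeness characterization of proper outerness recalled in Section~\ref{sec2}: an automorphism is properly outer iff no nonzero element satisfies the intertwining relation. So I would argue by contradiction: suppose $\theta$ is not properly outer on $Q$, so there is a nonzero $t\in Q$ with $tx=\theta(x)t$ for all $x\in Q$. Writing $\alpha:=\theta|_M$, which is outer (hence free) on the factor $M$ by hypothesis, the restriction of this equation to $x\in M$ reads
\begin{equation*}
tx=\alpha(x)t,\qquad x\in M,
\end{equation*}
with $t\in Q$.

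The key step is to combine this identity with the commutation $[C,M]=0$ together with the trace-preserving conditional expectation $E_M:N\to M$. Since $c_1,c_2\in C$ commute with every element of $M$ (and in particular with both $x$ and $\alpha(x)$), a short calculation gives $(c_1tc_2)x=\alpha(x)(c_1tc_2)$ for all $x\in M$. Applying the $M$-bimodule map $E_M$ to both sides yields
\begin{equation*}
E_M(c_1tc_2)\,x=\alpha(x)\,E_M(c_1tc_2),\qquad x\in M,
\end{equation*}
with $E_M(c_1tc_2)\in M$. Freeness of $\alpha$ on $M$ then forces $E_M(c_1tc_2)=0$ for every $c_1,c_2\in C$.

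To convert this into $t=0$, I would use trace-preservation of $E_M$ together with the $w^*$-density of the $*$-subalgebra $MC=\operatorname{span}\{mc:m\in M,c\in C\}$ in $Q$. For $m\in M$ and $c\in C$,
\begin{equation*}
\tau(mct)=\tau(E_M(mct))=\tau(m\,E_M(ct))=0,
\end{equation*}
since $E_M(ct)=0$ by the previous step with $c_1=c$, $c_2=1$. The normal functional $y\mapsto\tau(yt)$ therefore vanishes on $MC$, hence on all of $Q$. Taking $y=t^*$ gives $\tau(t^*t)=0$, so $t=0$ by faithfulness of $\tau$; contradiction.

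The potential trap I would consciously avoid is identifying $Q$ with the tensor product $M\,\overline{\otimes}\,C$ and running a slice-map argument: this identification fails in general, because for $M$ a type~II$_1$ factor the multiplication map $M\,\overline{\otimes}\,M'\to B(H)$ is not injective (the two sides have different types). The averaging by $c_1(\cdot)c_2$ combined with $E_M$ is precisely what sidesteps this difficulty, using only that $C$ commutes with $M$ and that $E_M$ is trace-preserving and $M$-bilinear—both of which are automatic for a trace-preserving conditional expectation onto a subfactor of a finite factor.
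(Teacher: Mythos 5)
Your proposal is correct and is essentially the paper's own argument: both restrict the intertwining relation to $M$, multiply by elements of the relative commutant $C$ (using that $C$ commutes with $M=\theta(M)$), apply the $M$-bimodule map $E_M$ to land in $M$, and invoke outerness of $\theta|_M$. The only difference is bookkeeping — the paper uses $w^*$-density of $\mathrm{span}\,MC$ and faithfulness of $\tau$ at the outset to choose $m,c$ with $E_M(cq)\ne 0$ and contradicts outerness directly, whereas you first show all the expectations $E_M(c_1tc_2)$ vanish and then deduce $t=0$ from the same density fact.
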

\begin{proof}
Suppose that $\theta$ is not properly outer on $Q$ and choose a nonzero element $q\in Q$ such that
\begin{equation}\label{eq6001}
qx=\theta(x)q,\ \ \ \ x\in Q.
\end{equation}
Since products of elements from $M$ and $C$ span a $w^*$-dense subspace of $Q$, we may choose $m\in M$ and $c\in C$ so that $\tau(mcq)\ne 0$. Then $E_M(mcq)\ne 0$ and the
same is true for $E_M(cq)$ since $mE_M(cq)=E_M(mcq)$. Noting that $c$ commutes with $\theta(x)$ for $x\in M$, we may multiply \eqref{eq6001} by $c$ on the left and apply $E_M$
to reach
\begin{equation}\label{eq6002}
E_M(cq)x=\theta(x)E_M(cq),\ \ \ x\in M.
\end{equation}
This contradicts the outerness of $\theta$ on $M$, so $\theta $ is properly outer on $Q$.
\end{proof}

\begin{lem}\label{lem7.1b} 
Let $M\subseteq N$ be an inclusion of II$_1$ factors, let $C=M'\cap N$, let $Q=W^*(M,C)$ and let $L=\mathcal{U}(M)\mathcal{U}(C)$. Let
$G=\mathcal{N}(M\subseteq N)/L$ and let $g\mapsto u_g\in \mathcal{N}(M\subseteq N)$ be a cross section for $G$ with $u_e=1$.
\begin{itemize}
\item[\rm (i)] For $g\in G\setminus \sett{e}$, $\alpha_g:={\mathrm{Ad}}\,u_g$ is a properly outer $\ast$-automorphism of $Q$ and an outer $\ast$-automorphism of $M$.
\item[\rm (ii)] $\alpha_g\alpha_h^{-1}$ is properly outer on $Q$ if and only if $g\ne h$.
\item[\rm(iii)] For $g\in G\setminus\sett{e}$, $E_M(u_g)=0$ and $E_Q(u_g)=0$.
\end{itemize}
\end{lem}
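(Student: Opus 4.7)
The plan is to prove the three parts in sequence, with parts (ii) and (iii) leveraging part (i) in conjunction with Lemma \ref{lem7.1a}.

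For (i), I would argue by contradiction. Suppose $\alpha_g|_M = \text{Ad}\,u_g|_M$ is inner on $M$, implemented by a unitary $v\in\mathcal{U}(M)$. Then $v^*u_g$ commutes with every element of $M$, so $v^*u_g\in M'\cap N=C$. Setting $c:=v^*u_g\in\mathcal{U}(C)$, we get $u_g=vc\in\mathcal{U}(M)\mathcal{U}(C)=L$, which forces $g=e$ in the quotient $G=\mathcal{N}(M\subseteq N)/L$, contradicting $g\ne e$. Thus $\alpha_g|_M$ is outer on the factor $M$, and Lemma \ref{lem7.1a} promotes this to proper outerness of $\alpha_g$ on $Q$.

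For (ii), note that $\alpha_g\alpha_h^{-1}=\text{Ad}\,(u_gu_h^{-1})$. Since $u_gu_h^{-1}$ and $u_{gh^{-1}}$ both represent the class $gh^{-1}$ in $G=\mathcal{N}(M\subseteq N)/L$, there exists $\ell\in L$ with $u_gu_h^{-1}=\ell u_{gh^{-1}}$. Hence
\begin{equation*}
\alpha_g\alpha_h^{-1}=\text{Ad}\,\ell\circ\alpha_{gh^{-1}},
\end{equation*}
where $\text{Ad}\,\ell$ is inner on $Q$ because $\ell\in\mathcal{U}(M)\mathcal{U}(C)\subseteq\mathcal{U}(Q)$. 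If $g=h$ then $\alpha_g\alpha_h^{-1}=\mathrm{id}$, which is inner. If $g\ne h$, then $\alpha_{gh^{-1}}$ is properly outer by (i), and I would transfer this to $\alpha_g\alpha_h^{-1}$ via the freeness characterization of proper outerness: if $t\in Q$ satisfies $tx=\ell\alpha_{gh^{-1}}(x)\ell^*t$ for all $x\in Q$, then $(\ell^*t)x=\alpha_{gh^{-1}}(x)(\ell^*t)$, so freeness of $\alpha_{gh^{-1}}$ forces $\ell^*t=0$, hence $t=0$.

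For (iii), I would exploit the intertwining relation $u_gx=\alpha_g(x)u_g$, valid for all $x\in Q$. Since $\alpha_g$ preserves $M$, restricting $x$ to $M$ and applying the $M$-bimodule map $E_M$ yields
\begin{equation*}
E_M(u_g)\,x=\alpha_g(x)\,E_M(u_g),\ \ \ \ x\in M.
\end{equation*}
Outerness of $\alpha_g|_M$ on the factor $M$ (equivalently, freeness) forces $E_M(u_g)=0$. Applying the $Q$-bimodule map $E_Q$ to the relation for $x\in Q$ gives $E_Q(u_g)\,x=\alpha_g(x)\,E_Q(u_g)$, and proper outerness of $\alpha_g$ on $Q$ from (i) forces $E_Q(u_g)=0$.

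The only mildly delicate point is the cocycle bookkeeping in (ii), specifically verifying that the $L$-part coming from the non-homomorphic cross section contributes only an inner perturbation on $Q$; once this is observed, the properly-outer/inner composition is handled by a direct application of freeness. Everything else is a short manipulation using Lemma \ref{lem7.1a} and the bimodule property of the conditional expectations.
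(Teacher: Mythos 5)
Your proposal is correct and follows essentially the same route as the paper: the quotient-by-$L$ contradiction for outerness on $M$ promoted to proper outerness on $Q$ via Lemma \ref{lem7.1a}, the observation that $\alpha_g\alpha_h^{-1}$ differs from $\alpha_{gh^{-1}}$ by an inner perturbation from $L$, and the application of $E_M$ and $E_Q$ to the intertwining relation. The only difference is that you spell out the freeness transfer in (ii) explicitly, which the paper leaves implicit.
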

\begin{proof} (i)\quad Suppose that $g\ne e$. If $\alpha_g$ is inner on $M$ then there exists $v\in \mathcal{U}(M)$ such that $\alpha_g={\text{Ad}}\,v$ on $M$. Then
$v^*u_g\in M'\cap N=C$, so $v^*u_g$ can be written as a unitary $w\in \mathcal{U}(C)$. Thus $u_g=vw\in L$. This is a contradiction, and so $\alpha_g$ is outer on $M$. It then follows from Lemma \ref{lem7.1a}
 that it is also properly outer on $Q$.

\medskip

\noindent (ii)\quad Since there are unitaries $v,w\in L$ such that $u_h^{-1}=vu_{h^{-1}}$ and $u_gu_{h^{-1}}=wu_{gh^{-1}}$, $\alpha_g\alpha_h^{-1}$ differs from
$\alpha_{gh^{-1}}$ by an inner automorphism on $Q$, and the result follows from (i).

\medskip

\noindent (iii)\quad If we apply $E_M$ to the equation $u_gx=\alpha_g(x)u_g$ for $x\in M$, the result is $E_M(u_g)x=\alpha_g(x)E_M(u_g)$. It follows from (i) that $E_M(u_g)=0$
for $g\ne e$, otherwise $\alpha_g$ would be inner on $M$. Similarly $E_Q(u_g)=0$, otherwise the equation 
$E_Q(u_g)x=\alpha_g(x)E_Q(u_g)$ for $x\in Q$ would contradict the proper outerness of $\alpha_g$ on $Q$, by part (i).
\end{proof}

The {\emph{twisted crossed product}} $Q\rtimes_{\alpha,\omega}G$ is defined to be the von Neumann algebra generated by $Q$ and the set of normalizers $\sett{u_g:g\in G}$. Such
algebras were studied in \cite{MCho}. The connection to regular inclusions of subfactors is exhibited by the following basic structural result from \cite{Cam}, in the
spirit of the Feldman-Moore theory of Cartan masas \cite{FM1,FM2}.

 \begin{lem}[Theorem 4.6 of \cite{Cam}] \label{lem:cp} Let $M \subseteq N$ be a regular inclusion of II$_1$ factors, and let $Q$ denote the von Neumann algebra generated by $M$ and $M' \cap N.$  Then there exists a  discrete group $G$, and a 2-cocycle $\omega: G \times G \rightarrow \U(Q)$ such that $N = Q \rtimes_{\alpha, \omega} G$.
\end{lem}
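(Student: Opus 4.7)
The setup preceding the lemma has already done most of the constructive work. The plan is to observe that the discrete group $G = \mathcal{N}(M\subseteq N)/L$ with $L=\mathcal{U}(M)\mathcal{U}(C)$ has been defined together with a cross section $g\mapsto u_g$ normalized so that $u_e=1$, and with the relation $u_gu_h=\omega(g,h)u_{gh}$ producing an $L$-valued 2-cocycle $\omega$. Since $M,C\subseteq Q$, one has $L\subseteq \mathcal{U}(Q)$, so $\omega$ automatically takes values in $\mathcal{U}(Q)$ as required by the statement. A routine associativity computation applied to the triple product $u_gu_hu_k$, using \eqref{eq6001b} in two orders, confirms the cocycle identity on $G\times G$.

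With this setup, the remaining task is to verify $N = W^*(Q,\sett{u_g:g\in G})$, which is precisely the in-text definition of $Q\rtimes_{\alpha,\omega}G$. The inclusion $\supseteq$ is immediate since $Q\subseteq N$ and each $u_g\in \mathcal{N}(M\subseteq N)\subseteq N$. For $\subseteq$, the hypothesis that $M\subseteq N$ is regular means that $N$ is generated as a von Neumann algebra by $\mathcal{N}(M\subseteq N)$, so it suffices to show that any $v\in\mathcal{N}(M\subseteq N)$ can be written using $Q$ and the $u_g$'s. By definition of the quotient map, $v$ represents some coset $gL$, so $v=\ell u_g$ with $\ell\in L\subseteq \mathcal{U}(Q)$, and hence $v\in W^*(Q,\sett{u_g})$ as required.

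The one preliminary point that must be checked carefully is that the automorphisms $\alpha_g=\mathrm{Ad}\,u_g$ really are well defined on all of $Q$ (not merely on $M$), so that the twisted crossed product structure makes sense. This is straightforward: for any $u\in\mathcal{N}(M\subseteq N)$, the calculation
\begin{equation*}
uCu^* = u(M'\cap N)u^* = (uMu^*)'\cap uNu^* = M'\cap N = C
\end{equation*}
shows that $u$ also normalizes $C$, whence $uQu^* = W^*(uMu^*,uCu^*) = W^*(M,C) = Q$. Once this observation is in place, relation \eqref{eq6001c} for the composition $\alpha_g\alpha_h$ follows formally from \eqref{eq6001b}, and the proof is complete. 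No genuine obstacle remains, since the heavy structural input has already been packaged into the preceding lemmas; the lemma itself amounts to recognizing that the regularity hypothesis forces the generating set of $N$ to be exhausted by $Q$ together with a cross section of $G$.
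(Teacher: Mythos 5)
You should first note that the paper offers no proof of this lemma at all: it is imported verbatim as Theorem 4.6 of \cite{Cam}, so there is no internal argument to compare against. Your reconstruction is nevertheless sound \emph{relative to the paper's working definition} of the twisted crossed product, namely ``the von Neumann algebra generated by $Q$ and the set of normalizers $\sett{u_g:g\in G}$.'' Under that reading, your three ingredients are exactly what is needed: the cocycle $\omega$ lands in $L=\mathcal{U}(M)\mathcal{U}(C)\subseteq\mathcal{U}(Q)$ because $u_gu_h$ and $u_{gh}$ project to the same coset; every $v\in\mathcal{N}(M\subseteq N)$ factors as $\ell u_g$ with $\ell\in L$ (using normality of $L$), so regularity gives $N=W^*(Q,\sett{u_g})$; and the computation $uCu^*=(uMu^*)'\cap uNu^*=C$ shows each normalizer of $M$ also normalizes $Q$, so $\alpha_g=\mathrm{Ad}\,u_g$ is a genuine automorphism of $Q$ and \eqref{eq6001c} follows formally.

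The one caveat worth flagging is that the generation argument alone does not capture the full force of the identification $N=Q\rtimes_{\alpha,\omega}G$ as it is actually used in the rest of Section \ref{sec7}. What makes the crossed-product picture useful is the orthogonal decomposition of $L^2(N)$ into the closures of the subspaces $Qu_g$, equivalently $E_Q(u_gu_h^*)=0$ for $g\ne h$, which is what licenses Fourier series with coefficients $x_g=E_Q(xu_g^*)$. That is not a consequence of ``$Q$ and the $u_g$ generate $N$''; it rests on the proper outerness of $\alpha_g$ on $Q$ for $g\ne e$, which the paper establishes separately via Kallman's lemma (Lemma \ref{lem7.1a}) and Lemma \ref{lem7.1b}(iii). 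If one reads the statement as asserting an isomorphism with an abstractly constructed twisted crossed product, your argument shows only that the natural map is onto, not that it is injective, and the injectivity is the substantive content of Theorem 4.6 of \cite{Cam}. Since the paper itself defers that content to Lemmas \ref{lem7.1a} and \ref{lem7.1b} and to the paragraph following Lemma \ref{lem:cp}, your proof is acceptable as a proof of the lemma as literally stated, but you should acknowledge that the outerness input is what carries the real weight downstream.
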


This result was used in \cite{Cam} to show that, if $M \subseteq N$ is a regular inclusion of II$_1$ factors, then $M$ norms $N$.  Note that in this situation the von Neumann subalgebra $Q$ is spatially isomorphic to the tensor product $M \ovltimes (M' \cap N).$

Throughout the remainder of this section, we maintain the following notation: $M \subseteq N$ is a regular inclusion of II$_1$ factors, $C = M' \cap N,$ and $Q = M \ovltimes C$. Then, as in Lemma \ref{lem:cp} we have that $N = Q \rtimes_{\alpha, \omega} G$ for some discrete group $G$. We use  $X$ to denote a $w^*$-closed $Q$-bimodule with $Q \subseteq X \subseteq N$.  

It follows from Lemma \ref{lem7.1b} and the regularity of the inclusion $M\subseteq N$ that the subspaces $\sett{Qu_g:g\in G}$ are mutually orthogonal in $L^2(N)$ and that this space is the direct sum of the $\|\cdot\|_2$-closures of these subspaces. Thus each $x\in N$ has a Fourier series $x=\sum_{g\in G} x_gu_g$ where $x_g\in Q$ is given by $x_g=E_Q(xu_g^*)$, just as in the crossed product case. This series converges in $\|\cdot\|_2$-norm.

\begin{lem} \label{lemCoeff}
For each $g \in G,$ the set  $J_g = Q \cap X u_{g}^*$ is a $w^*$-closed ideal in $Q$, and if $x \in X$ has Fourier series 
$\sum_{g \in G} x_g u_g,$ then $x_g \in J_g$ for each $g \in G$.  
\end{lem}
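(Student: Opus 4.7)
The first assertion is easy: $X u_g^*$ is $w^*$-closed as the right translate by $u_g^*$ of the $w^*$-closed set $X$, so $J_g = Q \cap X u_g^*$ is $w^*$-closed, and the ideal property follows from the bimodule structure together with the fact that $u_g$ normalizes $Q$ with $\alpha_g \in {\mathrm{Aut}}(Q)$: given $q \in J_g$ (so $qu_g \in X$) and $q' \in Q$, the identities $q'q \cdot u_g = q'(qu_g)$ and $qq' \cdot u_g = (qu_g)\alpha_g^{-1}(q')$ show that $q'q, qq' \in J_g$.

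For the second assertion, the plan is to adapt the argument of Theorem \ref{thm4.3} to the regular-inclusion setting. The central difficulty is that $Q$ is no longer a factor, so a nonzero $g$-Fourier coefficient cannot simply be rescaled to the identity. I will instead show that the central support $c(x_g) \in Z(Q)$ of $x_g$ lies in $J_g$; then $x_g = x_g\,c(x_g) \in Q \cdot c(x_g) \subseteq J_g$, since $J_g$ is a left ideal.

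To produce $c(x_g)$ in $J_g$, I will use a two-step averaging. Assume $x_g \ne 0$. For each small $\vp > 0$, the spectral projection $p_\vp := \chi_{[\vp,\infty)}(|x_g|)$ is nonzero, and using the polar decomposition $x_g = v|x_g|$ one picks $r_\vp \in Q$ with $r_\vp x_g = p_\vp$, so $r_\vp x \in X$ has $g$-coefficient $p_\vp$. Standard von Neumann algebra theory furnishes partial isometries $(w_\lambda^\vp) \subseteq Q$ with $(w_\lambda^\vp)^* w_\lambda^\vp \le p_\vp$ and $(w_\lambda^\vp(w_\lambda^\vp)^*)$ pairwise orthogonal projections summing in the $w^*$-topology to $c(p_\vp)$. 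The bounded, $w^*$-convergent sum
\[z_\vp := \sum_\lambda w_\lambda^\vp\, (r_\vp x)\, \alpha_g^{-1}((w_\lambda^\vp)^*)\]
lies in $X$, and since $w_\lambda^\vp p_\vp = w_\lambda^\vp$, its $g$-coefficient is $\sum_\lambda w_\lambda^\vp(w_\lambda^\vp)^* = c(p_\vp)$, a central projection.

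Finally I will apply the Christensen-Sinclair averaging net $(m_j)$ from Theorem \ref{thm4.1}, exactly as in Theorem \ref{thm4.3}: set $T_\beta(z_\vp) := \sum_j m_j^* z_\vp\, \alpha_g^{-1}(m_j) \in X$. Centrality of $c(p_\vp)$ preserves the $g$-coefficient as $c(p_\vp)$ along the net, while for each $h \ne g$ the automorphism $\alpha_h \alpha_g^{-1}$ is properly outer on $Q$ by Lemma \ref{lem7.1b}(ii), so Lemma \ref{lem4.2} gives $w^*$-convergence of the $h$-coefficient to $0$. Passing to a $w^*$-convergent subnet yields $c(p_\vp) u_g \in X$, so $c(p_\vp) \in J_g$. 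Letting $\vp \downarrow 0$, the projections $p_\vp$ increase to the right support of $x_g$, hence $c(p_\vp) \uparrow c(x_g)$, and $w^*$-closedness of $J_g$ gives $c(x_g) \in J_g$. The main obstacle throughout is the non-factor structure of $Q$, which forces the initial partial-isometry expansion (the substitute for the rescaling to the identity used in Theorem \ref{thm4.3}) and the subsequent limit in $\vp$; the Christensen-Sinclair step then functions as it does in the crossed-product setting.
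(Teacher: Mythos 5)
Your argument is correct, and its skeleton coincides with the paper's: both proofs pass from $x_g$ to a spectral projection $p_\vp$ of $|x_g|$, then use a family of partial isometries with initial projections under $p_\vp$ and mutually orthogonal final projections summing to the central support, so as to produce an element of $X$ whose $g$-coefficient is a central projection, and finally average to kill every other Fourier coefficient. The difference lies in the last step. The paper averages over $\mathcal{U}(Q)$ and takes the element of minimal $\norm{\cdot}_2$-norm in the weak closure of ${\mathrm{conv}}\sett{u\,z_\vp\,\alpha_g^{-1}(u^*):u\in\mathcal{U}(Q)}$, which is fixed by the averaging and hence, coefficient by coefficient, is forced to vanish off $g$ by proper outerness (this is the Lemma \ref{lem5.1} mechanism, available because $Q$ is finite and carries a trace). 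You instead import the Christensen--Sinclair net of Theorem \ref{thm4.1} and run the computation of Theorem \ref{thm4.3} verbatim, which is legitimate here: Lemma \ref{lem4.2} requires only proper outerness, not factoriality, and Lemma \ref{lem7.1b}(ii) supplies proper outerness of $\alpha_h\alpha_g^{-1}$ on $Q$ for $h\ne g$ even though the $\alpha$'s only form a cocycle action. Your route has the virtue of being trace-free (it would survive outside the finite setting) and of unifying Sections \ref{sec4} and \ref{sec7}; the paper's route is more elementary given that everything in Section \ref{sec7} is type II$_1$ anyway. Your closing step --- obtaining $c(p_\vp)\in J_g$ for each $\vp$, letting $c(p_\vp)\uparrow c(x_g)$ inside the $w^*$-closed ideal $J_g$, and then writing $x_g=x_gc(x_g)$ --- is a harmless reorganization of the paper's chain of ``it suffices to assume'' reductions.
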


\begin{proof}  

It is clear that $J_g$ is a $w^*$-closed ideal in $Q$.  Fix $g_0 \in G$ and consider $x = \sum_{g\in G}  x_g u_g \in X.$  Multiply by $x_{g_0}^*$, so that $x_{g_0}^*x = \sum_{g\in G} x_{g_0}^* x_g u_g \in X$.  If $x_{g_0}^* x_{g_0} \in J_{g_0}$ then so is $x_{g_0}$, so it suffices to assume that $x_{g_0} \geq 0.$  

For each $\eps > 0$ let $p_\eps$ be the spectral projection of $x_{g_0}$ for $[ \eps, \infty).$  Then $x_{g_0} p_\eps$ is invertible in $p_\eps Q p_\eps,$ so multiplying by $(x_{g_0} p_\eps)^{-1},$ we may further replace $x_{g_0}$ by $p_\eps$.  Thus we may assume that $x_{g_0}$ is a projection $p$.  If $z$ is its central support then there are partial isometries $v_i$ so that $z = \sum_i v_i p v_i^*$, so multiplying on the left by $v_i$ and on the right by $\alpha_{g_0}^{-1}(v_i^*),$ and summing, we can then make $x_{g_0}$ the central projection $z$.  Indeed if $z \in J_{g_0},$ then also $p = pz \in J_{g_0},$ so it suffices to take $x_{g_0} = z.$  Now for each unitary $u \in Q,$ we have
$u x \alpha_{g_0}^{-1}(u^*) = \sum_{g\in G} u x_g \alpha_g(\alpha_{g_0}^{-1}(u^*)) u_g \in X$.  Averaging over the unitary group of $Q$, we obtain an element $h \in X$ whose $u_g$-coefficient is the element $h_g$ of minimal $\norm{ \cdot}_2$ in the weak closure of  ${\mathrm{conv}} \sett{u x_g \alpha_g(\alpha_{g_0}^{-1} (u^*)): u \in \U(Q)},$ which satisfies
\begin{equation} u h_g = h_g \alpha_g(\alpha_{g_0}^{-1} (u^*)) \end{equation}
for all $u \in \U(Q).$  By proper outerness of the action of $G$, we must have $h_g = 0$ unless $g = g_0,$ in which case $h_{g_0}= x_{g_0} = z.$  Then  $z u_{g_0} \in X,$ so $z \in J_{g_0}$ as required.  \end{proof}

\begin{lem} \label{lemGenerate} 
With notation as in Lemma \ref{lemCoeff}, let $Y$ be the linear span of the spaces $ \sett{J_g u_g: g \in G}$.  If $X$ generates $N$ as a von Neumann algebra, then so does $Y$.  
\end{lem}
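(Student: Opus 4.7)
The plan is to let $Z = W^*(Y)$ denote the von Neumann algebra generated by $Y$ inside $N$, and to show that $X \subseteq Z$. Once this is in hand, the hypothesis $W^*(X) = N$ yields $N = W^*(X) \subseteq W^*(Y) \subseteq N$, so that $W^*(Y) = N$ as required.

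First I would verify that $Z$ contains $Q$: since $Q \subseteq X$, we have $J_e = Q \cap X u_e^* = Q \cap X = Q$, so $Q = J_e u_e \subseteq Y \subseteq Z$. Consequently $Z$ is a von Neumann subalgebra of $N$ containing $Q$, and for every $x \in X$ with Fourier expansion $\sum_{g \in G} x_g u_g$, Lemma \ref{lemCoeff} gives $x_g \in J_g$, so each term $x_g u_g$ lies in $J_g u_g \subseteq Y \subseteq Z$. Hence every finite partial sum $T_F(x) = \sum_{g \in F} x_g u_g$ lies in $Z$.

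The key remaining step is to pass from these partial sums to $x$ itself. Because the subspaces $\{Q u_g : g \in G\}$ are mutually $\|\cdot\|_2$-orthogonal and span a dense subspace of $L^2(N)$, the Fourier series of $x$ converges to $x$ in $\|\cdot\|_2$-norm. In particular, $x$ lies in the $L^2$-closure of $Z$ inside $L^2(N)$. Since $N$ is a II$_1$ factor with faithful normal trace $\tau$, the trace-preserving conditional expectation $E_Z : N \to Z$ extends to the orthogonal projection from $L^2(N)$ onto $L^2(Z)$. Applied to $x$, this projection returns $x$ (as an element of $L^2(N)$) because $x \in L^2(Z)$, while simultaneously returning $E_Z(x) \in Z$ because $x \in N$. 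Equating the two yields $x = E_Z(x) \in Z$, proving $X \subseteq Z$.

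The argument is essentially a direct packaging of Lemma \ref{lemCoeff} together with standard $L^2$/expectation facts for finite factors, so I do not expect a serious obstacle. The only delicate point is the passage from $\|\cdot\|_2$-approximation by partial Fourier sums to actual membership in the von Neumann subalgebra $Z$; this is where the intersection identity $L^2(Z) \cap N = Z$, realized via $E_Z$, does the work in place of the bounded-approximation arguments that would be required in a non-finite setting.
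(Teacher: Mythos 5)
Your proof is correct, but it takes a different route from the paper's. You prove the stronger statement $X \subseteq W^*(Y)$ directly: after using Lemma \ref{lemCoeff} to place each Fourier term $x_g u_g$ in $Y$, you observe that $x$ is the $\|\cdot\|_2$-limit of its partial sums and then invoke the identity $L^2(W^*(Y)) \cap N = W^*(Y)$, realized through the trace-preserving conditional expectation $E_{W^*(Y)}$ and its $L^2$-extension. The paper instead never forms $W^*(Y)$; it proves a small product-approximation lemma (if $X_i \subseteq \overline{Y_i}^{\|\cdot\|_2} \cap N$ then $X_1 \cdots X_k \subseteq \overline{Y_1\cdots Y_k}^{\|\cdot\|_2} \cap N$, using boundedness in operator norm to control $\|x_1x_2 - y_1y_2\|_2$), deduces ${\mathrm{Alg}}(X,X^*) \subseteq \overline{{\mathrm{Alg}}(Y,Y^*)}^{\|\cdot\|_2} \cap N$, and concludes that ${\mathrm{Alg}}(Y,Y^*)$ is $\|\cdot\|_2$-dense in $N$. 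Your argument is shorter and yields the sharper intermediate fact $X \subseteq W^*(Y)$, at the cost of invoking the standard but nontrivial compatibility between the Jones projection $e_{W^*(Y)}$ and the conditional expectation; the paper's argument stays entirely at the level of $\|\cdot\|_2$-approximation and joint continuity of multiplication on norm-bounded sets. Both hinge on the same two inputs, namely Lemma \ref{lemCoeff} and the $\|\cdot\|_2$-convergence of Fourier series, and both are complete.
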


\begin{proof}
First let $X_1$ and $X_2$ be subspaces of $N$, and suppose that there exist subspaces $Y_1$ and $ Y_2$ of $N$ so that 
$X_i$ is contained in the $\norm{\cdot}_2-$norm closure in $N$ of $Y_i$, $i = 1,2.$  We claim that $X_1 X_2$ is contained in the closure of $Y_1 Y_2$ in this topology.   Let $x_1, x_2 \in X$ and, given $\eps > 0$, pick $y_1 \in Y$ so that 
$\norm{x_1 - y_1}_2 < \eps/(2\norm{x_2}).$  Then choose $y_2 \in Y$ so that $\norm{x_2 - y_2} < \eps/(2 \norm{y_1}).$ We then have 
\begin{equation} \norm{x_1 x_2 - y_1 y_2}_2 \leq \norm{x_1 x_2 - y_1 x_2}_2 + \norm{y_1 x_2 - y_1 y_2} < \eps.\end{equation}
Extending to finite sums of such products, we see that $X_1 X_2 \subseteq \overline{Y_1 Y_2}^{\norm{\cdot}_2} \cap N.$  This argument extends by induction to finite products of subspaces $X_i \subseteq \overline{Y_i}^{\norm{\cdot}_2} \cap N,$   $1 \leq i \leq k,$ so that  $X_1 X_2 \cdots X_k \subseteq \overline{Y_1 Y_2 \cdots Y_k}^{\norm{\cdot}_2} \cap N.$

We apply the above when each $X_i$ is $X$ or $X^*$ and each $Y_i$ is $Y$ or $Y^*.$  If $x \in X$ has Fourier series $x = \sum_{g\in G} x_g u_g,$ then by Lemma \ref{lemCoeff}, we have $x_g u_g \in Y$ for each $g\in G$.  Since this series converges in $\norm{\cdot}_2$-norm, we see that $x \in  \overline{Y}^{\norm{\cdot}_2},$  so $X \subseteq \overline{Y}^{\norm{\cdot}_2}.$  By the above argument, it follows that ${\mathrm{Alg}}(X,X^*)$ is contained in 
$\overline{{\mathrm{Alg}}(Y,Y^*)}^{\norm{\cdot}_2} \cap N$.  Since $X$ generates $N$ as a von Neumann algebra, this says 
that ${\mathrm{Alg}}(Y,Y^*)$ is $\norm{\cdot}_2$-dense in $N$, and so $Y$ generates $N$.  
\end{proof}

\begin{lem} \label{lemenvelope}
With the notation of Lemma \ref{lemGenerate}, the C$^*$-envelope of $Y$ is C$^*(Y)$.
\end{lem}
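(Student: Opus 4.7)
My plan is to fit the inclusion $Q\subseteq C^*(Y)\subseteq N$ into the framework of Lemma \ref{lem5.2a} and then follow the proof of Theorem \ref{thm5.3} essentially verbatim. Several hypotheses of that lemma are already in place here: $E_Q:N\to Q$ is a faithful, normal, contractive conditional expectation; $Q = M\ovltimes C$ is finite and hence carries a faithful normal semifinite trace; each $u_g\in\mathcal N(Q\subseteq N)$ with $u_e=1$; and Lemma \ref{lem7.1b}(i) guarantees $\alpha_g=\operatorname{Ad}u_g$ is properly outer on $Q$ for $g\neq e$. What is missing is a compatible system of algebraic ideals.

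For each $g\in G$ I would define
\[
I_g := \{q\in Q : qu_g\in C^*(Y)\}.
\]
A direct check shows $I_g$ is a two-sided algebraic ideal: if $q\in I_g$ and $r\in Q$, then $rqu_g$ and $(qu_g)r = q\alpha_g(r)u_g$ both lie in $C^*(Y)$, so $rq,\,q\alpha_g(r)\in I_g$, and surjectivity of $\alpha_g$ yields the right-ideal property. From $Q = J_e\subseteq Y\subseteq C^*(Y)$ we get $I_e = Q$ and also $1\in Y$, so $Y$ is unital.

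The crucial step is the structural identity $C^*(Y) = \overline{\operatorname{span}}^{\|\cdot\|}\{I_g u_g : g\in G\}$. One inclusion is immediate from the definition of $I_g$. For the other, each element of $C^*(Y)$ is a norm limit of finite sums of words in $Y\cup Y^*$. A typical generator of $Y$ has the form $ju_g$ with $j\in J_g$, while $(ju_g)^* = \alpha_{g^{-1}}(j^*)v_g u_{g^{-1}}$ for a unitary $v_g\in Q$ coming from the cocycle identity $u_g^* = v_g u_{g^{-1}}$. Using the relations $u_g q = \alpha_g(q)u_g$ and $u_g u_h = \omega(g,h)u_{gh}$ repeatedly, any word in $Y\cup Y^*$ collapses to some element of $Qu_h\cap C^*(Y) = I_h u_h$, giving the required inclusion. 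This is the main obstacle of the proof: sorting out the interplay between the cocycle $\omega$, the adjoints of elements of $Y$, and the ideal structure, although once the two canonical identities are applied systematically the computation is clean.

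With this structural description in hand, Lemma \ref{lem5.2a} (applied with $P=N$, $A=C^*(Y)$, $\Lambda=G$, $\lambda_0=e$, $v_g=u_g$, $I_\lambda=I_g$) gives that every nonzero norm-closed ideal in $C^*(Y)$ has nonzero intersection with $Q$. From here the C$^*$-envelope statement follows as in Theorem \ref{thm5.3}: the universal property of $C^*_{env}(Y)$ provides a surjective $\ast$-homomorphism $\pi:C^*(Y)\to C^*_{env}(Y)$ extending the canonical completely isometric unital embedding $\iota:Y\to C^*_{env}(Y)$. If $\ker\pi\neq\{0\}$, it must contain a nonzero $q\in Q\subseteq Y$, which gives $\iota(q)=\pi(q)=0$ and contradicts the isometry of $\iota$ on $Y$. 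Hence $\pi$ is a $\ast$-isomorphism, i.e., $C^*_{env}(Y)=C^*(Y)$.
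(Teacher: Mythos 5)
Your proof is correct and follows essentially the same route as the paper: apply Lemma \ref{lem5.2a} with $A=C^*(Y)$ to see that every nonzero closed ideal meets $Q$, then run the universal-property argument of Theorem \ref{thm5.3}. The only difference is that you explicitly verify the ideal-decomposition hypothesis of Lemma \ref{lem5.2a} (via the ideals $I_g=\{q\in Q: qu_g\in C^*(Y)\}$), which the paper leaves implicit at this point and only spells out later, in the discussion preceding Lemma \ref{lemCstarext}.
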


\begin{proof}  Let $J$ be a nonzero ideal in the C$^*$-algebra C$^*(Y)$.  We may apply Lemma \ref{lem5.2a} with $A={\mathrm{C}}^*(Y)$ to conclude that $J \cap Q \neq \sett{0}.$ 

Now let $i: Y \rightarrow C^*_{env}(Y)$ be a completely isometric unital embedding and let $i': Y \rightarrow C^*(Y)$ be the identity embedding.  Then there is a surjective $\ast$-homomorphism $\pi: C^*(Y) \rightarrow C^*_{env}(Y)$ such that 

\begin{equation} \pi \circ i'(y) = i(y), \quad y \in Y.\end{equation}

Let $J = \ker \pi$.  If $J$ is a nonzero ideal then it has a nonzero intersection with $Q$, so choose $q \in Q$ with $q \neq 0$ and $\pi(q) = 0.$  Then also $i(q) = 0,$ a contradiction.  \end{proof}

We now turn to the main question of this section, in which we consider $w^*$-closed bimodules $Q \subseteq X \subseteq N$ (in the notation above), and isometric $w^*$-continuous maps $\theta: X \rightarrow X$  which restrict to $\ast$-automorphisms of $Q$ that fix the subfactor $M$.  We aim to show that any such map extends to a $\ast$-automorphism of the containing II$_1$ factor $N$.  Notice that this is a generalization of our work above on crossed products, in the special case that the containing factor is of type II$_1.$

\begin{lem}\label{lemY}
Let $X$ be a $w^*$-closed $Q$-bimodule with $Q \subseteq X \subseteq N$, and let $Y$ be as in Lemma \ref{lemGenerate}.  Let $\theta: X \rightarrow X$ be a $w^*$-continuous, surjective isometry whose restriction to $Q$ is a $\ast-$automorphism such that $\theta(M) = M$ and 

\begin{equation} 
\theta(q_1 x q_2) = \theta(q_1) \theta(x) \theta(q_2), \ \ \ \ q_1,q_2 \in Q, \ x \in X.
\end{equation}
Then $\theta$ maps $Y$ onto $Y$.  Moreover, $\theta$ is completely isometric.
\end{lem}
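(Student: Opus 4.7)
My plan is to establish both conclusions using Fourier analysis in $N=Q\rtimes_{\alpha,\omega}G$, intertwining arguments based on Lemma \ref{lem5.0}, and the norming property of $M$ in $N$.

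For the complete isometry, since $M\subseteq N$ is a regular inclusion of II$_1$ factors, $M$ norms $N$ by \cite{Cam}. Given any $X'=[x_{ij}]\in M_n(X)$ and any rows $R=[r_1,\ldots,r_n]$ and columns $C=[c_1,\ldots,c_n]^T$ over $M$ with $\norm{R},\norm{C}\leq 1$, the bimodule property of $\theta$ together with $\theta|_M$ being a $*$-automorphism of $M$ gives
\begin{equation*}
R\theta_n(X')C=\sum_{i,j}r_i\theta(x_{ij})c_j=\theta\Bigl(\sum_{i,j}\theta^{-1}(r_i)x_{ij}\theta^{-1}(c_j)\Bigr)=\theta(R'X'C'),
\end{equation*}
where $R'=[\theta^{-1}(r_i)]$ and $C'=[\theta^{-1}(c_j)]^T$ have the same norms as $R,C$ (since $\theta|_M$ is a $*$-automorphism, hence completely isometric). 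Because $\theta$ is isometric, $\norm{R\theta_n(X')C}=\norm{R'X'C'}$, and taking supremum over unit-norm rows and columns (where $R\leftrightarrow R'$, $C\leftrightarrow C'$ is a bijection of the unit ball) then yields $\norm{\theta_n(X')}=\norm{X'}$, so $\theta$ is a complete isometry.

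For $\theta(Y)=Y$, I show that $\theta$ permutes the Fourier modes $J_gu_g$ via a bijection $\sigma$ of $S:=\sett{g\in G:J_g\neq 0}$. Fix $g\in S$ and let $z_g\in Z(Q)$ be the central support of $J_g$, so that $J_g=Qz_g$ and $z_g\in J_g$. Expand $\theta(z_gu_g)=\sum_h a_hu_h$ in Fourier series, with $a_h\in J_h$ by Lemma \ref{lemCoeff}. The identity $qz_gu_g=z_gu_g\alpha_g^{-1}(q)$ (using centrality of $z_g$ together with $u_gqu_g^{-1}=\alpha_g(q)$) combined with the bimodule property of $\theta$ gives $\theta(q)\theta(z_gu_g)=\theta(z_gu_g)\theta(\alpha_g^{-1}(q))$ for all $q\in Q$. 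Matching Fourier coefficients and replacing $q$ by $\theta^{-1}(q)$ yields the intertwining relation
\begin{equation*}
qa_h=a_h\beta_h(q),\qquad q\in Q,
\end{equation*}
where $\beta_h:=\alpha_h\theta\alpha_g^{-1}\theta^{-1}\in\operatorname{Aut}(Q)$. By Lemma \ref{lem5.0}, $a_h\neq 0$ forces $\beta_h$ to fail to be properly outer on $Q$.

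For $h_1\neq h_2$, direct computation gives $\beta_{h_2}\beta_{h_1}^{-1}=\alpha_{h_2}\alpha_{h_1}^{-1}$, which is properly outer on $Q$ by Lemma \ref{lem7.1b}(ii). Combining the two intertwining relations produces $qc=c(\alpha_{h_2}\alpha_{h_1}^{-1})(q)$ for $c:=a_{h_1}^*a_{h_2}$ and all $q\in Q$, so Lemma \ref{lem5.0} forces $c=0$; a symmetric computation gives $a_{h_1}a_{h_2}^*=0$. A further argument exploiting the central decomposition $Q=M\ovltimes C$ and the proper outerness of the automorphisms $\beta_h$ eliminates the possibility of multiple nonzero modes, identifying a unique $\sigma(g)\in G$ for which $\theta(z_gu_g)=a_{\sigma(g)}u_{\sigma(g)}\in J_{\sigma(g)}u_{\sigma(g)}$. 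The left $Q$-module property then gives $\theta(qu_g)=\theta(qz_gu_g)=\theta(q)a_{\sigma(g)}u_{\sigma(g)}\in J_{\sigma(g)}u_{\sigma(g)}$ for every $q\in J_g$, hence $\theta(J_gu_g)\subseteq J_{\sigma(g)}u_{\sigma(g)}$. Running the analogous argument with $\theta^{-1}$ (which satisfies the same hypotheses) shows that $\sigma$ is a bijection of $S$ and upgrades the inclusion to $\theta(J_gu_g)=J_{\sigma(g)}u_{\sigma(g)}$, so $\theta(Y)=Y$. The main obstacle is the uniqueness of $\sigma(g)$: because $Q=M\ovltimes C$ is not a factor, the orthogonality relations $a_{h_1}^*a_{h_2}=a_{h_1}a_{h_2}^*=0$ do not by themselves force a single nonzero Fourier mode, and reducing to a unique $\sigma(g)$ requires careful use of the central decomposition of $Q$ together with Lemma \ref{lem7.1b}.
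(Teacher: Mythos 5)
Your complete isometry argument is correct and is essentially the argument the paper invokes by citation (it uses the norming of $N$ by $M$ from \cite{Cam} and the $Q$-bimodularity of $\theta$, exactly as in the proof of \cite[Theorem 1.4]{Pitts} that the paper points to). Your setup for $\theta(Y)=Y$ also matches the paper's: reduce to central elements $z_g$, expand $\theta(z_g u_g)=\sum_h a_h u_h$, and derive the intertwining relation $q a_h = a_h\beta_h(q)$ with $\beta_h=\alpha_h\circ\theta\circ\alpha_g^{-1}\circ\theta^{-1}$, so that $a_h\neq 0$ forces $\beta_h$ to fail proper outerness on $Q$ via Lemma \ref{lem5.0}.

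However, at the decisive step --- showing that at most one $a_h$ is nonzero --- your argument has a genuine gap, which you yourself flag. The orthogonality relations $a_{h_1}^*a_{h_2}=a_{h_1}a_{h_2}^*=0$ are correctly derived but, as you note, they cannot rule out two nonzero coefficients supported under orthogonal central projections of $Q=M\,\ovl\otimes\,C$, and the promised ``further argument exploiting the central decomposition'' is never supplied. The missing ingredient is Lemma \ref{lem7.1a}, which is precisely the tool the paper uses here: since $\theta(M)=M$ and each $\alpha_h$ preserves $M$, every $\beta_h$ restricts to a $\ast$-automorphism of $M$, so the contrapositive of Lemma \ref{lem7.1a} upgrades ``$\beta_h$ is not properly outer on $Q$'' to ``$\beta_h|_M$ is inner on the \emph{factor} $M$.'' If $a_{h_1}$ and $a_{h_2}$ were both nonzero for $h_1\neq h_2$, then $\beta_{h_1}\circ\beta_{h_2}^{-1}=\alpha_{h_1}\circ\alpha_{h_2}^{-1}={\mathrm{Ad}}\,\omega(h_1,h_2^{-1})\circ\alpha_{h_1h_2^{-1}}$ would be inner on $M$; since $\omega(h_1,h_2^{-1})\in L$ acts innerly on $M$, this would make $\alpha_{h_1h_2^{-1}}$ inner on $M$, contradicting Lemma \ref{lem7.1b}(i). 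This passage to the factor $M$ (rather than working in the non-factor $Q$) is exactly what your proposal is missing, and without it the proof of $\theta(Y)=Y$ is incomplete.
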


\begin{proof}
For each $g_0\in G$, $J_{g_0}$ is a $w^*$-closed ideal in $Q$, so is $Qp$ for some central projection $p\in Z(Q)$. Since $\theta(ju_{g_0})=\theta(j)\theta(pu_{g_0})$ for $j\in J_{g_0}$, it suffices to prove that $\theta(z u_{g_0}) \in Y$ whenever $z \in Z(Q)$ and $z u_{g_0} \in X.$  Applying $\theta$ to the equation

\begin{equation} q z u_{g_0} = z q u_{g_0} = z u_{g_0} \alpha_{g_0}^{-1}(q), \ \ \ \  q \in Q,\end{equation}

we obtain

\begin{equation} \theta(q) \theta(z u_{g_0}) = \theta(z u_{g_0}) \theta[ \alpha_{g_0}^{-1}(q)], \ \ \ \  q \in Q.\end{equation}

Let $\sum_{g\in G} y_g u_g$ be the Fourier series of $\theta(z u_{g_0})$.  Then, computing coefficients, we have 

\begin{equation} \theta(q) y_g = y_g \alpha_g[ \theta ( \alpha_{g_0}^{-1}(q))], \ \ \ \ q \in Q, \ g \in G.\end{equation}

Replacing $q$ by $\theta^{-1}(q),$ we get

\begin{equation}\label{eqxyz} q y_g = y_g ( \alpha_g \circ \theta \circ \alpha_{g_0}^{-1} \circ \theta^{-1})(q) \ \ \ \  q \in Q.\end{equation}

We claim that $\theta(z u_{g_0})$ has at most one nonzero Fourier coefficient $y_g$.  Suppose that there are distinct group elements $g_1$ and $g_2$ for which $y_{g_1}$ and $y_{g_2}$ are nonzero.  Then the automorphisms

\begin{equation} \psi_1 = \alpha_{g_1} \circ \theta \circ \alpha_{g_0}^{-1} \circ \theta^{-1} \ \ \text{and} \ \ \psi_2 = \alpha_{g_2} \circ \theta \circ \alpha_{g_0}^{-1} \circ \theta^{-1} \end{equation}

\noindent of $Q$ both fail to be properly outer on $Q$ from \eqref{eqxyz}.  As Q is generated by $M$ and its relative commutant, and by our hypotheses both $M$ and $M' \cap N$ are invariant under $\theta$, by Lemma \ref{lem7.1a} this implies that both automorphisms are inner on $M$.    Thus, the composition

\begin{equation} \psi_1 \circ \psi_2^{-1} = \alpha_{g_1} \circ \alpha_{g_2}^{-1} = {\mathrm{Ad}}\, \omega(g_1, g_2^{-1}) \circ \alpha_{g_1 g_2^{-1}} \end{equation}

\noindent is an inner automorphism of $M$.  This is a contradiction.  Thus, the Fourier series of $\theta(z u_{g_0})$ has only one nonzero coefficient, so $\theta(z u_{g_0})$ lies in $Y$.  To see that $\theta$ maps $Y$ onto itself, replace $q_i$ by $\theta^{-1}(q_i),$ $i = 1,2$, in the equation 

\begin{equation} \theta(q_1 x q_2) = \theta(q_1) \theta(x) \theta(q_2), \ \ \ \ q_1,q_2 \in Q, \ x \in X \end{equation}

\noindent and apply $\theta^{-2}$ to see that 

\begin{equation} \theta^{-1}(q_1 x q_2) = \theta^{-1}(q_1) \theta^{-1}(x) \theta^{-1}(q_2), \ \ \ \ q_1,q_2 \in Q, \ x \in X.\end{equation}

\noindent Thus, $\theta^{-1}$ satisfies the same properties that $\theta$ was assumed to have, so we may repeat the argument above to see that $\theta^{-1}$ also maps $Y$ into itself.  This completes the proof that $\theta$ maps $Y$ onto itself.

To prove that $\theta$ is a complete isometry, note that $\theta$ is a surjective isometry from $X$ onto itself.  The subalgebra $Q$ of $X$ is norming \cite{Cam}, so we may use the $Q$-bimodularity properties of $X$ and $\theta$, and follow the proof   of 
\cite[Theorem 1.4]{Pitts} to show that $\theta$ is a complete contraction.  The same argument can be used for $\theta^{-1}: X \rightarrow X$, so $\theta$ is a complete isometry from $X$ onto itself.   \end{proof}

Since $\theta: Y \rightarrow Y$ is a surjective, complete isometry, by the universal property of $C^*$-envelopes and Lemma \ref{lemenvelope}, $\theta$ extends to a $\ast$-automorphism $\phi$ of $C^*(Y)$.  Our next objective is to  show that this map extends further to a $\ast$-automorphism of $N$, for which we will need the following lemma, analogous to Lemma \ref{lemY}, and for which we require some notation.
 For $g\in G$, let $K_g\subseteq Q$ be the set of elements $j$ so that $ju_g\in {\mathrm{Alg}}(Y,Y^*)$. Then $K_g$ is an algebraic ideal in $Q$ and, by examining finite products from $Y$ and $Y^*$, we see that $K_g$ is also the set of elements which occur as the $u_g$-coefficients in Fourier series of elements of ${\mathrm{Alg}}(Y,Y^*)$. Since this algebra is $w^*$-dense in $N$, we also see that $K_g$ is $w^*$-dense in $Q$ for each $g\in G$. Now let $I_g$ denote the norm closure of $K_g$, which is an ideal in $Q$. Approximating elements of C$^*(Y)$ by elements from  ${\mathrm{Alg}}(Y,Y^*)$ shows that $I_g$ is also the set of elements which occur as the $u_g$-coefficients in the Fourier series of elements of  C$^*(Y)$ and $ju_g\in {\text{C}}^*(Y)$ for all $j\in I_g$.
The conclusion of the next result is similar to that of the previous one, but
requires different methods since the ideals $I_g$ may not be $w^*$-closed in contrast to the ideals $J_g$.

\begin{lem} \label{lemCstarext}
With the above notation, for each $g\in G$ there exists a unitary $w_g\in Q$ such that
\begin{equation}
\phi(ju_g)=\theta(j)w_gu_{\sigma(g)},\ \ \ \ j\in I_g,
\end{equation}
where $\sigma$ is an automorphism of $G$. 
\end{lem}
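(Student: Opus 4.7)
The plan is to first construct an automorphism $\sigma$ of $G$ that captures the permutation induced by $\phi$ on the crossed-product directions, and then to determine $\phi$ on each $I_g u_g$ via norm approximation by elements of $K_g$ together with the multiplicativity of $\phi$.

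For each $g \in G_0 := \{h \in G : J_h \neq 0\}$, let $z_g \in Z(Q)$ be the central projection with $J_g = Qz_g$. Applying the single-nonzero-coefficient argument from the proof of Lemma \ref{lemY} to $z_g u_g \in X$ produces $\sigma(g) \in G$ and $v_g \in J_{\sigma(g)}$ with $\theta(z_g u_g) = v_g u_{\sigma(g)}$; bimodularity then gives $\theta(j u_g) = \theta(j) v_g u_{\sigma(g)}$ for every $j \in J_g$. Since $Y$ generates $N$ by Lemma \ref{lemGenerate}, $G_0$ generates $G$, so to extend $\sigma$ to a homomorphism $G \to G$ one needs to verify that any relation $g_1^{\epsilon_1} \cdots g_n^{\epsilon_n} = e$ with $g_i \in G_0$ forces $\sigma(g_1)^{\epsilon_1} \cdots \sigma(g_n)^{\epsilon_n} = e$. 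The direct approach is to apply $\phi$ to the algebraic product $P = \prod_i (z_{g_i} u_{g_i})^{\epsilon_i}$, which lies in $Q$ (frequency $e$), and compare: $\phi(P) = \prod_i (v_{g_i} u_{\sigma(g_i)})^{\epsilon_i}$ has frequency $\sigma(g_1)^{\epsilon_1} \cdots \sigma(g_n)^{\epsilon_n}$ in $N$, and since $\phi(Q) = Q$ this frequency must be $e$. The main obstacle is that $P$ can vanish when the accumulated product $z_{g_1}\alpha_{g_1}(z_{g_2})\cdots$ is zero in $Z(Q)$, in which case this direct reading gives no information. To handle this case, I would apply $\phi$ to the relation $(z_g u_g) q (z_g u_g)^* = z_g \alpha_g(q)$ for $q \in Q$, obtaining $v_g \alpha_{\sigma(g)}\psi(q) v_g^* = \psi(z_g)\psi\alpha_g(q)$, which shows that $\alpha_{\sigma(g)}$ agrees with $\psi\alpha_g\psi^{-1}$ modulo an inner automorphism of $Q$ implemented via $v_g^*$ on the relevant corner. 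The proper outerness of $\alpha_h$ for $h \neq e$ (Lemma \ref{lem7.1b}) then determines $\sigma(g)$ uniquely modulo inner automorphisms and forces $\sigma$ to be multiplicative. Repeating the argument for $\phi^{-1}$ yields an inverse, making $\sigma$ an automorphism of $G$.

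Once $\sigma$ is established as an automorphism, take $j \in I_g$ and approximate in norm by $j_n \in K_g$. Each $j_n u_g$ is a finite sum of single-frequency-$g$ products of elements from $Y \cup Y^*$, and applying the $*$-homomorphism $\phi$ produces a sum of products of $\theta$-images; by the multiplicativity of $\sigma$ established above, every such product has total frequency $\sigma(g)$ in $N$, so $\phi(j_n u_g) \in Q u_{\sigma(g)}$. Norm continuity of $\phi$ yields $\phi(j u_g) = a(j) u_{\sigma(g)}$ for some $a(j) \in Q$. The relations $\phi(q_1 \cdot j u_g \cdot q_2) = \psi(q_1) \phi(j u_g) \psi(q_2)$ for $q_1, q_2 \in Q$ impose a twisted $Q$-bimodule structure on $a: I_g \to Q$, and combining this with the identity $a(j)a(j)^* = \phi((j u_g)(j u_g)^*) = \psi(jj^*) = \psi(j)\psi(j)^*$ forces $a(j) = \theta(j) w_g$ for a single unitary $w_g \in Q$ implementing the inner perturbation between $\psi\alpha_g\psi^{-1}$ and $\alpha_{\sigma(g)}$, completing the proof.
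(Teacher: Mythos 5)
Your proposal is correct in outline but takes a genuinely different route from the paper on the crucial step. The paper proves directly that $\phi(ju_g)$ has Fourier length at most one for $j$ in the merely norm-closed ideal $I_g$: it supposes two nonzero coefficients, manipulates the resulting intertwining identities using a positive approximate identity for $I_{k_0}$ (which tends strongly to $1$ because $I_{k_0}$ is $w^*$-dense in $Q$), and contradicts outerness via Lemma \ref{lem5.0} and Lemmas \ref{lem7.1a}, \ref{lem7.1b}; multiplicativity of $\sigma$ then comes from the non-vanishing of $e_{\lambda,g}\,\alpha_g(e_{\nu,h})$ for large indices. You instead build $\sigma$ first as a group homomorphism and then obtain the single-frequency property for free, since each $j_nu_g$ with $j_n\in K_g$ is a finite sum of monomials in $Y\cup Y^*$ of total frequency $g$, each of which the multiplicative map $\phi$ sends into $Qu_{\sigma(g)}$; this neatly bypasses the paper's coefficient computations \eqref{eq1002}--\eqref{eq1017}. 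The price is that all the weight lands on the well-definedness of $\sigma$ on $G=\langle G_0\rangle$, and that is the one place your sketch needs real expansion. The conjugation relation must be read inside $C^*(Y)$, where $\phi$ (not $\theta$, which is only $Q$-bimodular) is multiplicative; it yields $v_g x v_g^*=\psi(z_g)\,\bigl(\psi\alpha_g\psi^{-1}\alpha_{\sigma(g)}^{-1}\bigr)(x)$ for $x\in Q$ with $v_gv_g^*=\psi(z_g)\neq 0$, so Lemma \ref{lem5.0} shows $\psi\alpha_g\psi^{-1}\alpha_{\sigma(g)}^{-1}$ is not properly outer on $Q$ and Lemma \ref{lem7.1a} makes its restriction to $M$ inner. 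To convert this into uniqueness and multiplicativity of $\sigma$ you must add the observation, implicit in Lemma \ref{lem7.1b} but not stated by you, that $g\mapsto[\alpha_g|_M]$ is an \emph{injective homomorphism} into $\mathrm{Out}(M)$ (injective because $\alpha_{gh^{-1}}|_M$ is outer for $g\neq h$; a homomorphism because the cocycle $\omega$ takes values in $L$, whose adjoint action on $M$ is inner). Then $[\alpha_{\sigma(g)}|_M]=[\psi|_M][\alpha_g|_M][\psi|_M]^{-1}$ pins down $\sigma(g)$ and extends $\sigma$ consistently to all of $G$, with bijectivity from the same argument applied to $\theta^{-1}$. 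Your final paragraph, once the word ``forces'' is unpacked, reduces to the paper's own limit argument \eqref{eq1018}--\eqref{eq1022}: a right approximate identity for $I_g$ together with left $Q$-modularity produces the single $w^*$-limit $w_g$ with $\phi(ju_g)=\theta(j)w_gu_{\sigma(g)}$, and the $C^*$-identity plus finiteness of $Q$ makes $w_g$ unitary.
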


\begin{proof}
We first establish that all elements $\phi(ju_g)$, $g\in G$, $j\in I_g$, have Fourier series of length at most one, so suppose that this fails for particular choices $k_0\in G$ and $j_0\in I_{k_0}$. For each $j\in I_{k_0}$ write the Fourier series of $\phi(ju_{k_0})$ as 
\begin{equation}\label{eq1002}
\phi(ju_{k_0})=\sum_{g\in G}x_{j,g}u_g.
\end{equation}
By assumption there are two distinct elements $g_1,g_2\in G$ such that $x_{j_0,g_i}\ne 0$ for $i=1,2$. By scaling $j_0$, we may assume that both have norm at least one. 

Now, for $j\in I_{k_0}$,
\begin{equation}\label{eq1003}
ju_{k_0}xu_{k_0}^*j^*=j\alpha_{k_0}(x)j^*,\ \ \ \ x\in Q,
\end{equation}
so multiplication on the right by $ju_{k_0}$ leads to 
\begin{equation}\label{eq1004}
ju_{k_0}x\alpha_{k_0}^{-1}(j^*j)=j\alpha_{k_0}(x)j^*ju_{k_0},\ \ \ \ x\in Q.
\end{equation}
Apply $\phi$ in \eqref{eq1004} and use \eqref{eq1002} to reach 
\begin{equation}\label{eq1005}
\sum_{g\in G} x_{j,g}u_g\phi(x)\phi(\alpha_{k_0}^{-1}(j^*j))=\sum_{g\in G}\phi(j)\phi(\alpha_{k_0}(x))\phi(j^*)x_{j,g}u_g,
 \ \ \ \ x\in Q,\ j\in I_{k_0}.
\end{equation}
Comparison of coefficients in \eqref{eq1005} gives
\begin{equation}\label{eq1006}
x_{j,g}\alpha_g(\phi(x))\alpha_g(\phi(\alpha_{k_0}^{-1}(j^*j)))=\phi(j)\phi(\alpha_{k_0}(x))\phi(j^*)x_{j,g},\ \ \ \ x\in Q,\ j\in I_{k_0},\ g\in G.
\end{equation}
If we define $\ast$-automorphisms of $Q$ by $\beta_g=\alpha_g\circ \phi \circ \alpha_{k_0}^{-1}\circ \phi^{-1}$, $g\in G$, and make the substitution $y=\phi(\alpha_{k_0}(x))$, then \eqref{eq1006} becomes
\begin{equation}\label{eq1007}
\phi(j)y\phi(j^*)x_{j,g}=x_{j,g}\beta_g(y)\alpha_g(\phi(\alpha_{k_0}^{-1}(j^*j))),\ \ \ \ y\in Q,\ j\in I_{k_0},\ g\in G.
\end{equation}

Choose an increasing contractive positive approximate identity $\sett{e_\lambda:\lambda\in \Lambda}$ for $I_{k_0}$. Since $I_{k_0}$ is $w^*$-dense in $Q$, we have  the additional property that 
\begin{equation}\label{eq1008}
{\mathrm{SOT}}-\lim_\lambda e_\lambda =1.
\end{equation}

Consider now $r,j\in I_{k_0}$.
Then
\begin{equation}\label{eq1009}
\sum_{g\in G}x_{rj,g}u_g=\phi(rju_{k_0})=\phi(r)\phi(ju_{k_0})=\sum_{g\in G}\phi(r)x_{j,g}u_g,
\end{equation}
so comparison of coefficients in \eqref{eq1009} leads to
\begin{equation}\label{eq1010}
\phi(r)x_{j,g}=x_{rj,g},\ \ \ \ r,j\in I_{k_0},\ g\in G.
\end{equation}
Similarly, for $j,s\in I_{k_0}$,
\begin{align}
\sum_{g\in G}x_{js,g}u_g&=\phi(jsu_{k_0})=\phi(ju_{k_0}\alpha_{k_0}^{-1}(s))\notag\\
&=\sum_{g\in G}x_{j,g}u_g\phi(\alpha_{k_0}^{-1}(s))
=\sum_{g\in G}x_{j,g}\alpha_g(\phi(\alpha_{k_0}^{-1}(s)))u_g,\label{eq1011}
\end{align}
so
\begin{equation}\label{eq1012}
x_{js,g}=x_{j,g}\alpha_g(\phi(\alpha_{k_0}^{-1}(s))),
\ \ \ \ j,s\in I_{k_0},\ g\in G.
\end{equation}
Replacing $r$ by $e_\lambda^3$ in \eqref{eq1010} gives
\begin{equation}\label{eq1013}
\phi(e_\lambda^3)x_{j,g}=\phi(e_\lambda^2)x_{e_\lambda j,g}
=\phi(e_\lambda^2)x_{e_\lambda,g}\alpha_g(\phi(\alpha_{k_0}^{-1}(j))),\ \ \ \ \lambda\in \Lambda,\ g\in G,\ j\in I_{k_0},
\end{equation}
where the second equality follows from \eqref{eq1012} with $e_\lambda$ and $ j$ in place of $j$ and $s$ respectively.

 From \eqref{eq1008} we may choose $\lambda_0$ so large that
\begin{equation}\label{eq1014}
\|\phi(e_\lambda^3)x_{j_0,g_i}\|\geq 1/2,\ \ \ \
\lambda \geq\lambda_0,\ i=1,2.
\end{equation}

It follows from \eqref{eq1013} that
\begin{equation}\label{eq1016}
\|\phi(e_\lambda^2)x_{e_\lambda,g_i}\|
\geq 1/2,\ \ \ \ \lambda\geq\lambda_0,\ i=1,2,
\end{equation}
and in particular that all of these elements are nonzero. Returning to \eqref{eq1007} and replacing $j$ by $e_\lambda$, $g$ by $g_i$, and $y$ by $x$, we see that
\begin{equation}\label{eq1017}
\phi(e_\lambda)x\phi(e_\lambda)x_{e_\lambda,g_i}=x_{e_\lambda,g_i}
\beta_{g_i}(x)\alpha_{g_i}(\phi(\alpha_{k_0}^{-1}(e_\lambda^2 ))),\ \ \ \ x\in Q,\ \lambda\geq\lambda_0,
\ i=1,2.
\end{equation}
Then \eqref{eq1017} has the form of \eqref{eq5.1} with $a=\phi(e_\lambda)$ and $b=\phi(e_\lambda)x_{e_\lambda,g_i}$ and it follows from \eqref{eq1016} that the hypotheses of
Lemma \ref{lem5.0} are met with $x_0=1$. By that lemma, we conclude that $\beta_{g_1}$ and $\beta_{g_2}$ are not properly outer on $Q$ and so their restrictions to $M$ are both
inner by Lemma \ref{lem7.1a}. Thus
$\alpha_{g_1}\circ \alpha_{g_2}^{-1}=\beta_{g_1}\circ \beta_{g_2}^{-1}$ is inner on $M$, in contradiction to Lemma \ref{lem7.1b} since $g_1\ne g_2$. This proves that $\phi(ju_g)$ has a Fourier
series of length at most one whenever $ju_g\in {\text{C}}^*(Y)$.

For a particular $g\in G$, if there exist
nonzero elements  $j_1,j_2\in I_g$ so that $\phi(j_iu_g)=k_iu_{h_i}$ for $i=1,2$ with $h_1\ne h_2$, then the Fourier series of $\phi((j_1+j_2)u_g)$ has length two,
contradicting what we have already proved. Thus there is a map $\sigma:G\to G$ and linear contractions $\mu_g:I_g\to Q$, $g\in G$, so that
\begin{equation}\label{eq1018}
\phi(ju_g)=\mu_g(j)u_{\sigma(g)},\ \ \ \ g\in G,\ j\in I_g.
\end{equation}
Fix an arbitrary $g\in G$ and let $\sett{f_\lambda:\lambda\in \Lambda}$ be a contractive positive approximate identity for $I_g$ with the property of \eqref{eq1008}. Since
$\sett{\mu_g(f_\lambda):\lambda\in \Lambda}$ is a set of contractions, we may drop to a subnet and assume that there is an element $w_g\in Q$ such that $w^*-\underset{\lambda}{\lim}\, 
\mu_g(f_\lambda)=w_g$. Since $\underset{\lambda}{\lim}\, \|jf_\lambda -j\|=0$ for each $j\in I_g$, \eqref{eq1018} gives
\begin{equation}\label{eq1019}
\phi(ju_g)=\lim_\lambda\phi(jf_\lambda u_g)=\lim_\lambda
\phi(j)\mu_g(f_\lambda)u_{\sigma(g)}
=\phi(j)w_gu_{\sigma(g)},\ \ \ \ j\in I_g.
\end{equation}

It remains to show that $\sigma$ is an automorphism of $G$ and that each $w_g$ is a unitary. Take $g,h\in G$ and choose increasing positive contractive approximate identities $\sett{e_{\lambda,g}:\lambda\in
\Lambda_g}$ and $\sett{e_{\nu,h}:\nu\in \Lambda_h}$ for $I_g$ and $I_h$ respectively, both tending strongly to 1. Then the equation
\begin{equation}\label{eq1020}
\phi(e_{\lambda,g}\alpha_g(e_{\nu,h})u_gu_h)=\phi(e_{\lambda,g} u_g
e_{\nu,h} u_h)=\phi(e_{\lambda,g} u_g)\phi(e_{\nu,h} u_h)
\end{equation}
shows that $\phi(e_{\lambda,g} \alpha_g(e_{\nu,h})u_gu_h)$ lies simultaneously in $Qu_{\sigma(g)\sigma(h)}$ and $Qu_{\sigma(gh)}$. Sufficiently large choices of $\lambda$ and $\nu$
ensure that $e_{\lambda,g} \alpha_g(e_{\nu,h})\ne 0$, and thus $\sigma(gh)=\sigma(g)\sigma(h)$, proving that $\sigma$ is a homomorphism.

Since $w_g$ is a contraction, it suffices to show that $w_gw_g^*=1$; the finiteness of $Q$ will then establish that $w_g^*w_g=1$. If $w_gw_g^*\ne 1$, then there exist
$\vp >0$ and a spectral projection $p$ for $w_gw_g^*$ such that $\|pw_g\|\leq 1-\vp$. Then $e_{\lambda,g} \phi^{-1}(p)\in I_g$ for $\lambda\in \Lambda_g$, so
\begin{equation}\label{eq1021}
\phi(e_{\lambda,g} \phi^{-1}(p)u_g)=\phi(e_{\lambda,g})pw_gu_{\sigma(g)}.
\end{equation}
Thus
\begin{equation}\label{eq1022}
\|\phi(e_{\lambda,g}\phi^{-1}(p)u_g)\|\leq \|pw_g\|\leq 1-\vp,\ \ \ \ \lambda\in \Lambda_g.
\end{equation}
On the left hand side, the limit over $\lambda$ is 1 since $e_{\lambda,g}\to 1$ strongly, and this contradiction proves that each $w_g$ is a unitary.

The above arguments apply equally to $\phi^{-1}$ and so there exist unitaries $v_g\in Q$ for $g\in G$ and a homomorphism $\rho:G\to G$ such that
\begin{equation}\label{eq1023}
\phi^{-1}(ju_g)=\phi^{-1}(j)v_gu_{\rho(g)},
\ \ \ \ g\in G,\ j\in I_g.
\end{equation}
Computing $ju_g$ as both $\phi^{-1}(\phi(ju_g))$ and $\phi(\phi^{-1}(ju_g))$ leads easily to the conclusion that $\rho\sigma$ and $\sigma\rho$ are both the identity map
on $G$. This shows that $\sigma$ is an automorphism and completes the proof.
\end{proof}

\begin{lem}\label{lem7.8}
The $\ast$-automorphism $\phi$ of C$^*(Y)$ extends to a $\ast$-automorphism of $N$.
\end{lem}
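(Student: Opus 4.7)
The plan is to mimic the proof of Lemma \ref{lem6.1}: construct the extension $\Phi:N\to N$ of $\phi$ via the Fourier series of elements of $N$, using the explicit formula from Lemma \ref{lemCstarext}. Given $x\in N$ with Fourier series $x=\sum_{g\in G} x_g u_g$, the natural candidate is
\[
\Phi(x)=\sum_{g\in G}\theta(x_g)w_g u_{\sigma(g)},
\]
where $\theta=\phi|_Q$, and $w_g$, $\sigma$ come from Lemma \ref{lemCstarext}. The first step is to show this makes sense. Since $Y$ generates $N$ as a von Neumann algebra, so does $\mathrm{C}^*(Y)$, and the Kaplansky density theorem produces a uniformly bounded net $(x^\beta)\subseteq \mathrm{C}^*(Y)$ with $x^\beta\to x$ in the $w^*$-topology. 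Because $\phi$ is a contractive $\ast$-automorphism of $\mathrm{C}^*(Y)\subseteq N$, the net $(\phi(x^\beta))$ is uniformly bounded in $N$, and dropping to a subnet gives a $w^*$-limit $y\in N$.

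The next step is to identify $y$ via its Fourier coefficients. Each $x^\beta$ lies in $\mathrm{C}^*(Y)$, so by the remarks preceding Lemma \ref{lemCstarext} its $u_g$-coefficient $x^\beta_g=E_Q(x^\beta u_g^*)$ lies in $I_g$; the $w^*$-continuity of $E_Q$ and of the right-multiplication by $u_g^*$ forces $x^\beta_g\to x_g$ in the $w^*$-topology of $Q$. Applying Lemma \ref{lemCstarext} termwise, together with the $w^*$-continuity of $\theta$ on $Q$, the $u_{\sigma(g)}$-coefficient of $\phi(x^\beta)$ is $\theta(x^\beta_g)w_g$ and it converges in $w^*$ to $\theta(x_g)w_g$. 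Since $\sigma$ is a bijection of $G$, this determines $y$ as $\sum_{g\in G}\theta(x_g)w_g u_{\sigma(g)}$, independently of the choice of net. The same coefficient-based argument, applied to uniformly bounded nets converging to $0$, shows via the Krein--Smulian theorem that $\Phi$ is $w^*$-continuous. Contractivity is immediate from the construction and the contractivity of $\phi$; moreover $\Phi|_Q=\theta$ because the Fourier series of $q\in Q$ has only the $u_e$ term, $\sigma(e)=e$, and $w_e=1$ (the latter by comparing $\phi(q)=\theta(q)$ with Lemma \ref{lemCstarext}).

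Now I would upgrade $\Phi$ from a $w^*$-continuous linear contraction to a $\ast$-homomorphism. Adjoint is $w^*$-continuous and the two maps $x\mapsto \Phi(x^*)$ and $x\mapsto \Phi(x)^*$ agree on the $w^*$-dense subalgebra $\mathrm{C}^*(Y)$, hence everywhere. For multiplicativity, fix $x\in \mathrm{C}^*(Y)$ and use separate $w^*$-continuity of multiplication on bounded sets: if $y^\beta\to y$ with $(y^\beta)\subseteq \mathrm{C}^*(Y)$ uniformly bounded, then $\Phi(xy^\beta)=\phi(x)\phi(y^\beta)=\Phi(x)\Phi(y^\beta)$ passes in the $w^*$-limit to $\Phi(xy)=\Phi(x)\Phi(y)$. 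A second such approximation in the first variable extends this to all $x,y\in N$. Surjectivity follows by performing exactly the same construction with $\phi^{-1}$ in place of $\phi$, yielding a $w^*$-continuous $\ast$-homomorphism $\Psi:N\to N$ with $\Psi|_{\mathrm{C}^*(Y)}=\phi^{-1}$; both compositions $\Psi\circ\Phi$ and $\Phi\circ\Psi$ are $w^*$-continuous $\ast$-homomorphisms that fix the $w^*$-dense subalgebra $\mathrm{C}^*(Y)$, hence equal the identity on $N$.

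The main obstacle is the non-$w^*$-closedness of the ideals $I_g$: one cannot simply define $\Phi$ by the Fourier formula and check continuity directly, since the Fourier coefficients of a general $x\in N$ need not lie in $I_g$, so the individual terms $\theta(x_g)w_gu_{\sigma(g)}$ are not a priori in $\phi(\mathrm{C}^*(Y))$. The workaround is precisely to route everything through uniformly bounded Kaplansky-type approximations in $\mathrm{C}^*(Y)$, using $w^*$-continuity of $E_Q$ and $\theta$ to extract limiting Fourier coefficients, rather than attempting to manipulate the formal series directly.
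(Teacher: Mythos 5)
Your proposal is correct and is essentially the argument the paper intends: the paper's proof of this lemma simply defers to the proof of Lemma \ref{lem6.1} ``with suitable changes of notation,'' and your construction --- defining the extension coefficientwise via the formula of Lemma \ref{lemCstarext}, using Kaplansky density and the normality of $E_Q$ and $\theta|_Q$ to identify the limit, then obtaining $w^*$-continuity from Krein--Smulian and multiplicativity from separate $w^*$-continuity on bounded sets --- is exactly that adaptation. Your closing remark about routing through bounded approximants in $\mathrm{C}^*(Y)$ rather than manipulating the formal series (since the $I_g$ need not be $w^*$-closed) correctly identifies the one point where the notational change matters.
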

\begin{proof}
Lemma \ref{lemCstarext} establishes that
\begin{equation}
\phi(ju_g)=\phi(j)w_gu_{\sigma(g)}
\end{equation}
for $j\in I_g$, $g\in G$, an automorphism $\sigma$ of $G$ and unitaries $w_g\in Q$. Based on this, the remainder of the proof follows that of Lemma \ref{lem6.1} with suitable changes of notation. We omit the details.
\end{proof}

Denote the extension of the $\ast$-automorphism $\phi$ to all of $N$ by $\overline{\theta}$.  We know that this map coincides with the original isometric bimodule map $\theta$ on $Y$, and we will have proved our main result -- that $\overline{\theta}$ is the unique extension of $\theta$ to a $\ast$-automorphism of $N$ -- if we can show that the two maps coincide on the ultraweakly closed bimodule $X$.  To do this, we first prove the following lemma.

\begin{lem} \label{lemPhi} If $x \in X$ has Fourier series $x = \sum_{g \in G} x_g u_g,$ then $\theta(x)$ has Fourier series $\sum_{g \in G} \theta(x_g u_g)$.
\end{lem}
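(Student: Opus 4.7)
The plan is to adapt the proof of Lemma~\ref{lem6.2} to the twisted crossed product $N = Q \rtimes_{\alpha,\omega} G$, using the proper outerness of $\alpha_g$ on $Q$ (Lemma~\ref{lem7.1b}) in place of outerness on a factor. For $x \in X$ with Fourier series $\sum_g x_g u_g$, Lemma~\ref{lemCoeff} ensures $x_g \in J_g$, and Lemmas~\ref{lemY} and \ref{lemCstarext} together identify $\theta(x_g u_g) = \theta(x_g) w_g u_{\sigma(g)}$, where $\sigma \in \mathrm{Aut}(G)$ and $w_g \in \U(Q)$ come from Lemma~\ref{lemCstarext}. Hence the claim reduces to showing that the $u_{\sigma(g)}$-coefficient of $\theta(x)$ equals $\theta(x_g) w_g$ for every $g \in G$.

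Suppose this fails at some $g_0 \in G$, and write $h_0 = \sigma(g_0)$. Replacing $x$ by $x - x_{g_0} u_{g_0} \in X$, we may assume $x_{g_0} = 0$ while the $u_{h_0}$-coefficient $y'$ of $\theta(x)$ is a nonzero element of $J_{h_0}$. Following the reduction in the proof of Lemma~\ref{lemY}, applied to $\theta(x)$ and transferred back to $x$ via the $\ast$-automorphism $\theta|_Q$ (which preserves $Z(Q)$), we may further assume that $y' = z$ is a nonzero central projection in $Z(Q) = C$, while retaining $x \in X$ and $x_{g_0} = 0$.

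Now let $\beta = (n_j)_{j\in J} \subseteq Q$ be a Christensen--Sinclair net from Theorem~\ref{thm4.1} with $\sum_j n_j^* n_j = I$, and define
\[
R_\beta(x) = \sum_j \theta^{-1}(n_j)^* \, x \, \theta^{-1}\bigl(\alpha_{h_0}^{-1}(n_j)\bigr),
\]
which lies in $X$ because $\theta^{-1}|_Q$ is a $\ast$-automorphism of $Q$ and $X$ is a $Q$-bimodule. The $Q$-bimodularity of $\theta$ gives $\theta(R_\beta(x)) = \sum_j n_j^* \theta(x) \alpha_{h_0}^{-1}(n_j)$, whose $u_{h_0}$-coefficient is $\sum_j n_j^* z n_j = z$ by the centrality of $z$, a fixed nonzero element independent of $\beta$. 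On the other hand, the $u_{g_0}$-coefficient of $R_\beta(x)$ vanishes because $x_{g_0} = 0$, and the $u_g$-coefficient for $g \ne g_0$ takes, after applying the identity $\theta \circ \alpha_{g_0} = \mathrm{Ad}(w_{g_0}) \circ \alpha_{h_0} \circ \theta$ on $Q$ (derivable from Lemma~\ref{lemCstarext}) and the cocycle relation $\alpha_g \alpha_{g_0}^{-1} = \mathrm{Ad}(\omega(g, g_0^{-1})) \alpha_{gg_0^{-1}}$, the form of the sums analyzed in Theorem~\ref{thm4.3}; it tends to $0$ in the $w^*$-topology by Lemma~\ref{lem4.2} applied to the properly outer automorphism $\alpha_{gg_0^{-1}}$ of $Q$ (proper outerness given by Lemma~\ref{lem7.1b}).

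By $w^*$-compactness and passage to a subnet, $R_\beta(x) \to 0$ in the $w^*$-topology. Then $w^*$-continuity of $\theta$ forces $\theta(R_\beta(x)) \to 0$ in $w^*$, contradicting the constancy of its $u_{h_0}$-coefficient at $z \ne 0$. The hard part of the proof is the cocycle bookkeeping in the $u_g$-coefficient computation, together with verifying that the reduction step to $y' = z$ central preserves both $x \in X$ and $x_{g_0} = 0$; both come down to careful application of the identities from Lemmas~\ref{lemCstarext} and~\ref{lemY} combined with the twisted multiplication rules $u_g q = \alpha_g(q) u_g$.
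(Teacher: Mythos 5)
Your proposal is correct in outline, but it reaches the conclusion by a genuinely different route from the paper. You set up the contradiction with $x_{g_0}=0$ and a nonzero target coefficient, and then kill the surviving coefficients by transplanting the Christensen--Sinclair averaging of Theorem \ref{thm4.1} and Lemma \ref{lem4.2} (i.e.\ the mechanism of Theorem \ref{thm4.3} and Lemma \ref{lem6.2}) into the twisted setting, ultimately contradicting the $w^*$-continuity of $\theta$. The paper runs the reduction in the opposite direction -- it arranges $x_{g_0}\ne 0$ while the $\sigma(g_0)$-coefficient of $\theta(x)$ vanishes -- and then averages over $\mathcal{U}(Q)$ using the minimal $\norm{\cdot}_2$-norm element of the weakly closed convex hull (the trace-based Dixmier-type averaging already used in Lemma \ref{lemCoeff}), concluding that $\theta(x_{g_0}u_{g_0})=0$ and contradicting the \emph{isometry} of $\theta$ rather than its $w^*$-continuity. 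The paper's choice is the more economical one in the II$_1$ setting, since the averaging device is already in place from Lemma \ref{lemCoeff} and no module-map projection needs to be transported through $\theta$. Your route works, but the two points you defer as ``bookkeeping'' are where the real checking lies: (a) the automorphism that must be properly outer in your coefficient computation is not $\alpha_{gg_0^{-1}}$ itself but an inner perturbation of $\alpha_g\circ\alpha_{g_0}^{-1}$ conjugated by $\theta|_Q$ (coming from the cocycle $\omega$ and the unitary $w_{g_0}$ of Lemma \ref{lemCstarext}); proper outerness survives both operations, and Lemma \ref{lem7.1b}(ii) then applies, but this should be said explicitly; and (b) the families $\theta^{-1}(n_j)$ and $\theta^{-1}(\alpha_{h_0}^{-1}(n_j))$ are not literally the net of Theorem \ref{thm4.1}, so you must observe that conjugating the net by a $\ast$-automorphism still computes a (conjugated) projection onto right module maps, which again vanishes on maps of the form $t\mapsto\beta(t)a$ with $\beta$ properly outer. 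With those two points filled in, your argument is sound.
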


\begin{proof}
If $x \in Q$ and $g \in G$ are such that $x u_g \in Y$, then $x$ lies in the ideal $J_g = X u_g^{-1} \cap Q$ of Lemma \ref{lemCoeff}, and thus in the larger ideal $I_g$ of Lemma \ref{lemCstarext}. By this last lemma, there is an automorphism $\sigma$ of $G$ such that $\theta$ maps  $J_gu_g$ into $I_{\sigma(g)}u_{\sigma(g)}$, and since $\theta $ maps $Y$ to $Y$ we see that $\theta$ takes $J_gu_g$ into 
$J_{\sigma(g)}u_{\sigma(g)}$.  The same observations applied to $\theta^{-1}$ also show that $\theta^{-1}(J_{\sigma(g)} u_{\sigma(g)}) \subseteq J_g u_g$ for  $g \in G.$  This establishes that $\theta(J_g u_g) = J_{\sigma(g)} u_{\sigma(g)}.$

To prove the result, proceed by contradiction and suppose that we have an element  $x = \sum_{g\in G} x_g u_g \in X$ for which $\theta(x) = \sum_{g \in G} y_g u_g \in X$ with $y_{\sigma(g_0)} u_{\sigma(g_0)} \neq \theta(x_{g_0} u_{g_0})$ for some $g_0 \in G.$  Subtracting  $y_{\sigma(g_0)} u_{\sigma(g_0)}$ from both sides of the equation 

\begin{equation}
\theta(x) = \sum_{g\in G} y_g u_g, 
\end{equation}

\noindent and noting that $y_{\sigma(g_0)} u_{\sigma(g_0)}$ is the image under $\theta$ of some element of $J_{g_0} u_{g_0}$ distinct from $x_{g_0} u_{g_0}$,  we obtain an element $x = \sum_{g\in G} x_g u_g \in X$ with $x_{g_0} \neq 0$, for which $\theta(x) = \sum_{g\in G} y_g u_g$ has $y_{\sigma(g_0)}=0$.

By further reductions similar to those in Lemma \ref{lemY}, we may also assume that $x_{g_0}$ is a nonzero central projection in $Q$.  Note that these operations do not change the fact that $y_{\sigma(g_0)} = 0.$  We now use familiar averaging techniques to pick out individual terms in the respective Fourier series.

For any unitary $w \in Q,$ we have 

\begin{equation} \theta(w) \theta\left( \sum_{g\in G} x_g u_g \right) \theta( \alpha_{g_0}^{-1}(w^*)) = \theta \left(\sum_{g \in G} w x_g \alpha_{g}\alpha_{g_0}^{-1}(w^*) u_g\right), \end{equation}

\noindent by the $Q$-bimodularity property of $\theta.$  On the other hand, 

\begin{equation} \theta(w) \sum_{g \in G} y_g u_g \theta (\alpha_{g_0}^{-1}(w^*)) = \sum_{g \in G} \theta(w) y_g \alpha_g \circ \theta \circ \alpha_{g_0}^{-1}(w^*) u_g .\end{equation}

\noindent This says that for any $w \in \U(Q),$ the operator  $\theta \left(\sum_{g \in G} w x_g \alpha_{g}\circ\alpha_{g_0}^{-1}(w^*) u_g\right)$ lies in the weak closure of the convex set 

\[ K = {\mathrm{conv}} \sett{ \sum_{g \in G} v y_g \alpha_g \circ \theta \circ \alpha_{g_0}^{-1} \circ \theta ^{-1}(v^*) u_g: v \in \U(Q)}.\]

\noindent Then since $\theta$ is $w^*$-continuous, applying $\theta$ to the element of minimal $\norm{ \cdot}_2$-norm in the weak closure of the convex set

\[ K' = {\mathrm{conv}} \sett{ \sum_{g \in G} w x_g \alpha_{g}\alpha_{g_0}^{-1}(w^*) u_g: w \in \U(Q)} \]

\noindent also gives an element of the weak closure of $K$.  By the methods used in Lemma \ref{lemCoeff}, we know that the element $h$ of minimal $\norm{\cdot}_2$-norm in the weak closure of $K'$ has the form $h = x_{g_0} u_{g_0},$ so this says that $\theta(x_{g_0} u_{g_0}) \in K ,$ and has the form $q_{\sigma(g_0)} u_{\sigma(g_0)}$ for some $q_{\sigma(g_0)} \in Q.$  But since $y_{\sigma(g_0)} = 0,$ every element of $K$ has zero $\sigma(g_0)-$coefficient, so this says that $\theta(x_{g_0} u_{g_0})= 0,$ a contradiction to the fact that $\theta$ is isometric on $X$.   This proves the desired result.     \end{proof}

We come now to the main result of this section, our version of Mercer's theorem for regular inclusions of subfactors.

\begin{thm} \label{thmExtension}
Let $M \subseteq N$ be a regular inclusion of II$_1$ factors.  Denote by $Q$ the von Neumann algebra generated by $M$ and $M' \cap N,$ and let $X$ be a w$^*$-closed $Q$-bimodule with $Q \subseteq X \subseteq N$, which generates $N$ as a von Neumann algebra.  Suppose that $\theta: X \rightarrow X$ is a w$^*$-continuous, surjective isometry whose restriction to $Q$ is a $\ast$-automorphisms with $\theta(M) = M$ and which satisfies

\[ \theta(q_1 x q_2) = \theta(q_1) \theta(x) \theta(q_2),\]

\noindent for all $x \in X$ and $q_1, q_2 \in Q.$  Then $\theta$ has a unique extension to a $\ast$-automorphism $\overline{\theta}: N \rightarrow N.$  

\end{thm}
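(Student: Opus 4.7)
The plan is to wire together the lemmas already established. By Lemma \ref{lemY}, the restriction $\theta|_Y$ is a completely isometric surjection of $Y$ onto itself, and by Lemma \ref{lemenvelope} the C$^*$-envelope of $Y$ is C$^*(Y)$. The universal property of the C$^*$-envelope therefore produces a unique extension of $\theta|_Y$ to a $\ast$-automorphism $\phi$ of C$^*(Y)$. Lemma \ref{lemCstarext} gives $\phi$ an explicit form on the generators of C$^*(Y)$, and Lemma \ref{lem7.8} in turn extends $\phi$ to a $\ast$-automorphism $\overline{\theta}$ of the ambient II$_1$ factor $N$. The remaining task is purely to verify that $\overline{\theta}$ agrees with the given map $\theta$ on all of $X$, and to argue uniqueness.

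For agreement on $X$, I would fix $x \in X$ with Fourier series $x = \sum_{g\in G} x_g u_g$ converging in $\|\cdot\|_2$. By Lemma \ref{lemCoeff}, each coefficient $x_g$ lies in the ideal $J_g = Q \cap Xu_g^{-1}$, so the individual term $x_g u_g$ belongs to $Y$. Consequently $\theta$ and $\overline{\theta}$ already coincide on each such $x_g u_g$. Lemma \ref{lemPhi} asserts that the Fourier series of $\theta(x)$ is precisely $\sum_g \theta(x_g u_g)$. Since $\overline{\theta}$ is a $\ast$-automorphism of the II$_1$ factor $N$, it preserves the unique normalized tracial state and hence the $\|\cdot\|_2$-norm, so it is $\|\cdot\|_2$-continuous on $N$. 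Therefore the partial sums of the Fourier expansion of $x$ satisfy
\begin{align*}
\overline{\theta}(x) \;=\; \lim_{F} \overline{\theta}\Bigl(\sum_{g\in F} x_g u_g\Bigr) \;=\; \lim_{F} \sum_{g\in F}\theta(x_g u_g) \;=\; \theta(x),
\end{align*}
where the last equality invokes Lemma \ref{lemPhi} and the convergence of the Fourier series of $\theta(x)$ in $\|\cdot\|_2$. Hence $\overline{\theta}|_X = \theta$.

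For uniqueness, any two $\ast$-automorphisms of $N$ extending $\theta$ agree on $X$, and since $X$ is assumed to generate $N$ as a von Neumann algebra, they agree on all of $N$. Essentially every genuine obstacle has already been absorbed into the supporting lemmas: Lemma \ref{lemY} supplies the complete isometry needed to invoke C$^*$-envelope theory, Lemma \ref{lemenvelope} pins down that envelope, Lemmas \ref{lemCstarext} and \ref{lem7.8} construct the $\ast$-automorphism of $N$, and Lemma \ref{lemPhi} is the technical heart guaranteeing the correct Fourier behavior on $X$. The only delicate point in the present step is the $\|\cdot\|_2$-continuity of $\overline{\theta}$, and this is automatic in a II$_1$ factor, so the theorem reduces to a short assembly of the pieces.
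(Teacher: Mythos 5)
Your proof is correct and follows essentially the same route as the paper: assemble Lemmas \ref{lemY}, \ref{lemenvelope}, \ref{lemCstarext} and \ref{lem7.8} to construct $\overline{\theta}$, then use Lemma \ref{lemPhi} to identify it with $\theta$ on $X$, with uniqueness following from generation. The one (harmless) divergence is that you justify $\overline{\theta}(x)=\sum_{g}\overline{\theta}(x_gu_g)$ by the automatic $\|\cdot\|_2$-continuity of a trace-preserving $\ast$-automorphism, whereas the paper re-runs the argument of Lemma \ref{lemPhi} with $\overline{\theta}$ in place of $\theta$; both steps are valid.
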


\begin{proof}  Let $\overline{\theta}$ be the extension to $N$ of the $\ast$-automorphism $\phi: C^*(Y) \rightarrow C^*(Y)$ obtained in Lemma \ref{lemCstarext}.  We know that $\overline{\theta}$ coincides with $\theta$ on the subspace $Y \subseteq N$, and so we are done if we can show that these two maps coincide on $X$.  By  Lemma \ref{lemPhi}, we have that for all $x = \sum_{g \in G} x_g u_g \in X,$ 

\begin{equation} \theta(x) = \sum_{g \in G} \theta(x_g u_g),\end{equation}

\noindent and since $\overline{\theta}$ and $\theta$ coincide on $Y$, the same argument also shows that 

\begin{equation} \overline{\theta}(x) = \sum_{g \in G} \overline{\theta}(x_g u_g) \end{equation}

\noindent for all such $x.$  Thus, since $\theta(x_g u_g) = \overline{\theta}(x_g u_g)$ for each $g$, we see that  $\theta(x) = \overline{\theta}(x)$.  \end{proof}

\section{Single generation}\label{sec8}
A longstanding problem  is the question of whether each separable von Neumann algebra is singly generated. There has been considerable work on this problem \cite{DP,DSSW,GS,P,Top,W} with the result that the only remaining open case is for finite von Neumann algebras. Recently Shen \cite{Shen} studied this question and introduced a numerical quantity $\mathcal{G}(M)$ which is related to the number of generators of $M$.  He showed that if $
\mathcal{G}(M)<1/4$, then $M$ is generated by a projection $p$ and a hermitian element $h$ (and thus singly generated by $p+ih$) and noted that the same proof shows single generation when
$\mathcal{G}(M)<1/2$ (see \cite[Theorem 3.1]{DSSW} for a strengthened version). Shen was able to establish $\mathcal{G}(M)=0$ for various II$_1$ factors, hyperfinite, Cartan or nonprime,
thus providing many examples of singly generated factors. Indeed, the question of single generation  is equivalent to whether $\mathcal{G}(M)=0$ for all $M$ \cite{DSSW}, since any
example where $\mathcal{G}(M)>0$ leads to examples that are not even finitely generated. In this section we will examine the behavior of $\mathcal{G}(\cdot)$ under the formation of
crossed products, and this will give new classes of singly generated factors.

For a II$_1$ factor $M$ with normalized trace $\tau$, one way to define the Shen invariant $\mathcal{G}(M)$ is as follows. Consider a fixed element $x\in M$ and a fixed set $P=\sett{p_1,\ldots, p_k}$ of orthogonal
projections summing to $I$ and with equal traces $\tau(p_i)=1/k$. Relative to these projections, $x$ can be written as a $k\times k$ matrix $(x_{ij})$, and $\mathcal{I}(x,P)$ is defined to
be the number of nonzero matrix entries divided by $k^2$. For a finite subset $X=\sett{x_1,\ldots, x_n}\subseteq M$, we define
\begin{equation}\label{eq8.1}
\mathcal{I}(X,P)=\sum_{i=1}^n \mathcal{I}(x_i,P).
\end{equation}
If $M$ is not finitely generated then we set $\mathcal{G}(M)=\infty$; otherwise we define
\begin{equation}\label{eq8.2}
\mathcal{G}(M)=\inf \,\mathcal{I}(X,P)
\end{equation}
taken over all finite sets $X$ of generators and all finite sets of projections $P$ as above. If $P=\sett{p_1,\ldots ,p_k}$ is such a set of projections, then we may form a new set
$Q=\sett{q_1,\ldots ,q_{2k}}$ by splitting each $p_i$ as an orthogonal sum of two projections of trace $1/(2k)$. It is straightforward to verify that $\mathcal{I}(x,Q)\leq
\mathcal{I}(x,P)$, and by applying this halving argument an arbitrary number of times, we see that the sets of projections in \eqref{eq8.2} can be restricted to have sizes that exceed
any given integer.

\begin{lem}\label{lem8.1}
Let $G$ be a countable discrete group that acts on a ${\mathrm{II}}_1$ factor $M$ by outer automorphisms, and let $p\in M$ be a nonzero projection. Then there exists an element $a\in p(\cp)p$ so that every coefficient in the Fourier series of $a=\sum_{g\in G}a_gg$ is nonzero.
\end{lem}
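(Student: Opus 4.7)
The plan is to give a direct construction.  Enumerate the countable group as $G=\sett{g_1,g_2,g_3,\ldots}$, and build $a$ as a norm-convergent series $a=\sum_{n\geq 1}2^{-n}c_n g_n$ where each coefficient $c_n\in M$ is chosen so that (i) $c_n\ne 0$, and (ii) the compression $pap$ returns $a$ unchanged.

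First I would record the compression identity.  Using the crossed-product relation $gp=\alpha_g(p)g$, one sees that for $c\in M$ and $g\in G$,
\[
p(cg)p=pcp\cdot g\cdot p=pc\alpha_g(p)g\cdot\alpha_g(p) \; \text{(after absorbing } p\text{ on the left into } c\text{)},
\]
so that a Fourier series $\sum_g c_gg$ belongs to $p(\cp)p$ precisely when $c_g=pc_g\alpha_g(p)$ for every $g$.  The task therefore reduces to choosing, for each $n$, a nonzero element of the form $pb_n\alpha_{g_n}(p)$ with $b_n\in M$.

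Next I would invoke the factor property of $M$.  Since $p$ and $\alpha_{g_n}(p)$ are nonzero projections in the factor $M$, both have central support $I$, and so the subspace $pM\alpha_{g_n}(p)$ is nonzero (otherwise $\alpha_{g_n}(p)$ would be orthogonal to $Mp$, forcing $\alpha_{g_n}(p)=0$).  Choose $b_n\in M$ with $\|pb_n\alpha_{g_n}(p)\|=1$, and set $c_n:=pb_n\alpha_{g_n}(p)$.  Define
\begin{equation*}
a:=\sum_{n=1}^\infty 2^{-n}c_ng_n.
\end{equation*}
The series converges absolutely in operator norm since each $g_n$ is unitary and $\|2^{-n}c_n g_n\|\leq 2^{-n}$; hence $a\in \cp$.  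By construction $pc_n\alpha_{g_n}(p)=c_n$ for each $n$, so term-by-term $pap=a$, and the $g_n$-coefficient of $a$ is $2^{-n}c_n\ne 0$.

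There is essentially no obstacle here; the lemma is a straightforward construction.  The only points requiring care are the bookkeeping behind the identity $pcp\cdot gp=pc\alpha_g(p)g$ and the verification that $pM\alpha_g(p)\ne 0$ in a factor, both of which are standard.  The outerness of the action is used implicitly only to guarantee that the Fourier coefficients $a_{g_n}=E(ag_n^{-1})$ are well-defined via the canonical conditional expectation.
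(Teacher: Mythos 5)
Your construction is essentially the paper's own proof: both enumerate $G$, use the factor property of $M$ to find $x_n$ with $px_n\alpha_{g_n}(p)\neq 0$, scale by $2^{-n}$ for norm convergence, and observe that compression by $p$ replaces the $g_n$-coefficient by $px_n\alpha_{g_n}(p)$. (The displayed intermediate identity has a stray factor of $\alpha_g(p)$, but the relation you actually use, $p(cg)p=pc\alpha_g(p)g$, is correct, so the argument stands.)
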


\begin{proof}
Let $\sett{g_1,g_2,\ldots}$ be an enumeration of the group elements. Since $M$ is a factor, there exist elements $x_n\in M$, $n\geq 1$, so that $px_n\alpha_{g_n}(p)\ne 0$, and by scaling we may assume that $\|x_n\|\leq 2^{-n}$. Then $x=\sum_{n\geq 1}x_ng_n$ defines an element of $\cp$, and
\begin{equation}\label{eq8.3}
pxp=\sum_{n=1}^{\infty} px_ng_np=\sum_{n=1}^{\infty} px_n\alpha_{g_n}(p)g_n,
\end{equation}
so all the Fourier coefficients of $pxp$ are nonzero. Then $a=pxp\in p(\cp)p$ is the desired element.
\end{proof}

\begin{thm}\label{thm8.2}
Let $M$ be a ${\mathrm{II}}_1$ factor and let $G$ be a countable discrete group acting on $M$ by outer automorphisms. Then
\begin{equation}\label{eq8.4}
\mathcal{G}(\cp)\leq \mathcal{G}(M).
\end{equation}
\end{thm}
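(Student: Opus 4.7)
The plan is to combine Lemma \ref{lem8.1} with the bimodule structure theorem (Theorem \ref{thm4.3}) to show that $M$ together with a single well-chosen element supported in a small corner already generates the entire crossed product, so that adjoining this element to a near-optimal generating set for $M$ affects $\mathcal{I}$ only negligibly.

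Concretely, I would first dispose of the trivial case $\mathcal{G}(M)=\infty$. Otherwise, fix $\vp>0$ and choose a finite generating set $X=\sett{x_1,\ldots,x_n}$ for $M$ together with a set of orthogonal projections $P=\sett{p_1,\ldots,p_k}$ of equal trace $1/k$ summing to $I$, satisfying $\mathcal{I}(X,P)<\mathcal{G}(M)+\vp/2$. Using the halving argument described immediately after \eqref{eq8.2}, I may refine $P$ (without increasing $\mathcal{I}(X,P)$) to assume that $k$ is so large that $1/k^2<\vp/2$, at the cost of replacing $p_1$ by a suitable smaller subprojection—call it $p_1$ again.

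Now apply Lemma \ref{lem8.1} to the nonzero projection $p_1$ to obtain an element $a\in p_1(\cp)p_1$ whose Fourier expansion $a=\sum_{g\in G}a_gg$ has every $a_g\ne 0$. Form the enlarged set $X'=X\cup\sett{a}$. The crucial claim is that $X'$ generates $\cp$ as a von Neumann algebra. Indeed, let $N=W^*(X')$. Since $X$ generates $M$ we have $M\subseteq N\subseteq \cp$, and $N$ is a $w^*$-closed $M$-bimodule containing $a$. By Theorem \ref{thm4.3}, for every $g\in G$ such that some element of $N$ has nonzero $g$-coefficient we must have $g\in N$; since $a\in N$ has nonzero coefficient at every $g$, this gives $G\subseteq N$ and hence $N=\cp$.

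It remains to estimate $\mathcal{I}(X',P)$. Because $a=p_1ap_1$, its matrix relative to the decomposition $I=\sum p_i$ has a single nonzero entry, so $\mathcal{I}(a,P)\leq 1/k^2<\vp/2$. Therefore
\[
\mathcal{I}(X',P)=\mathcal{I}(X,P)+\mathcal{I}(a,P)<\mathcal{G}(M)+\vp,
\]
which forces $\mathcal{G}(\cp)\leq \mathcal{G}(M)+\vp$. Letting $\vp\to 0$ yields the inequality. The only nontrivial step is the generation claim for $N$, and even this is handled cleanly by invoking Theorem \ref{thm4.3}; there is no serious technical obstacle beyond recognizing that a single ``spread out'' element in a corner suffices to promote $M$ to the full crossed product.
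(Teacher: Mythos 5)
Your proposal is correct and follows essentially the same route as the paper: augment a near-optimal generating set for $M$ by a single element of $p_1(\cp)p_1$ with full Fourier support (Lemma \ref{lem8.1}), note that it contributes only $1/k^2$ to $\mathcal{I}$, and let $k\to\infty$. The only cosmetic difference is that you justify the generation claim by applying Theorem \ref{thm4.3} directly to $W^*(X')$, whereas the paper cites Remark \ref{rem4.6} (intermediate von Neumann algebras are $M\rtimes_\alpha H$), which is itself a consequence of the same theorem.
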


\begin{proof}
The result is clear if $\mathcal{G}(M)=\infty$, so we may assume that this quantity is finite. Given $\vp >0$, choose an integer $K$ so that $K>\sqrt{2/\vp}$. By the remarks preceding
Lemma \ref{lem8.1}, there is a set of projections $P=\sett{p_1,\ldots,p_k}$ with $k\geq K$ and a set of generators $X=\sett{x_1,\ldots,x_n}$ for $M$ so that
\begin{equation}\label{eq8.5}
\mathcal{I}(X,P)<\mathcal{G}(M)+\vp/2.
\end{equation}
By Lemma \ref{lem8.1}, there exists an element $x_{n+1}\in p_1(\cp)p_1$ whose Fourier coefficients are all nonzero.
By Remark \ref{rem4.6}, $Y=\sett{x_1,\ldots,x_{n+1}}$ is a generating set for $\cp$, and
\begin{align}
\mathcal{I}(Y,P)&=\mathcal{I}(X,P)+\mathcal{I}(x_{n+1},P)= \mathcal{I}(X,P)+1/k^2\notag\\
&<\mathcal{G}(M)+\vp/2+1/K^2<\mathcal{G}(M)+\vp.\label{eq8.6}
\end{align}
Thus $\mathcal{G}(\cp)<\mathcal{G}(M)+\vp$, and the result follows since $\vp>0$ was arbitrary.
\end{proof}

\begin{rem}\label{rem8.3}
If $\mathcal{G}(M)=0$, then $\mathcal{G}(\cp)=0$ from \cite{Shen}, so Theorem \ref{thm8.2} gives nothing new in this case. However, there are finitely generated factors $M$ for which
$\mathcal{G}(M)$ is currently unknown. Such examples include free group factors where $\mathcal{G}(L(\mathbb{F}_n)\leq(n-1)/2$ for $n\geq 2$ \cite{DSSW}. Using \cite[Theorem 4.5]{DSSW},
we may tensor with suitably large matrix algebras to obtain factors $M$ whose Shen invariants are yet to be determined but which lie below 1/4. Any crossed product of such an algebra is
singly generated by Theorem \ref{thm8.2}.$\hfill\square$
\end{rem}

\end{document}